\documentclass{article}
\usepackage{graphicx} 
\usepackage{amsthm}
\usepackage{amsfonts, amsmath, amssymb, amsthm}
\usepackage{hyperref}
\usepackage{cleveref}
\usepackage[utf8x]{inputenc}
\usepackage{url}
\usepackage{tikz}
\usepackage{enumerate}
\usepackage{stmaryrd} 

\newcommand{\Psf}{\mathsf{P}}

\def\F{\mathcal{F}}
\def\E{\mathcal{E}}

\def\L{\mathcal{L}}

\def\M{\mathcal{M}}

\newcommand{\bstr}{2^{<\NN}}

\newcommand{\ISig}{\mathsf{I}\Sigma^0}

\newcommand{\BSig}{\mathsf{B}\Sigma^0}

\newcommand{\RCA}[0]{\mathsf{RCA}}
\newcommand{\WKL}[0]{\mathsf{WKL}}

\newcommand{\ADS}[0]{\mathsf{ADS}}
\newcommand{\EM}[0]{\mathsf{EM}}

\newcommand{\RT}[0]{\mathsf{RT}}

\newcommand{\NN}[0]{\mathbb{N}}

\newcommand{\card}{\operatorname{card}}

\newcommand{\finsub}{\subseteq_{\mathtt{fin}}}

\def\qt#1{``#1''}%

\newcommand{\Bsig}{\mathsf{B}\Sigma}

\newcommand{\PAm}{\mathsf{PA}^{-}}

\newcommand{\Cod}{\operatorname{Cod}}
\newcommand{\Jcut}{\mathsf{J}}
\newcommand{\shortbrace}[1]{\stackrel{\scriptsize #1 \mbox{ times}}{\dots}}
\newcommand{\Leaves}{\mathrm{Leaves}}
\newcommand{\Str}{\mathrm{Str}}
\newcommand{\Mil}{\mathrm{Mil}}

\title{Largeness notions and polytime translation\\ for $\forall \Sigma^0_3$-consequences of $\mathsf{RT}^2_2$}
\date{\today}

\newtheorem*{statement}{Statement}
\newtheorem{theorem}{Theorem}
\numberwithin{theorem}{section}
\newtheorem{maintheorem}[theorem]{Main Theorem}
\newtheorem{lemma}[theorem]{Lemma}

\newtheorem{proposition}[theorem]{Proposition}
\newtheorem{remark}[theorem]{Remark}
\newtheorem{definition}[theorem]{Definition}
\newtheorem{corollary}[theorem]{Corollary}

\makeatletter
\newtheorem*{rep@theorem}{\rep@title}
\newcommand{\newreptheorem}[2]{%
\newenvironment{rep#1}[1]{%
 \def\rep@title{#2 \ref{##1}}%
 \begin{rep@theorem}}%
 {\end{rep@theorem}}}
\makeatother

\newreptheorem{theorem}{Theorem}
\newreptheorem{maintheorem}{Main Theorem}

\usepackage{xcolor}

\usepackage{authblk}
\DeclareSymbolFont{bbold}{U}{bbold}{m}{n}
\DeclareMathSymbol{\bbomega}{\mathord}{bbold}{"7F}

\author{Quentin Le Houérou \and Ludovic Patey}

\begin{document}

\maketitle

\begin{abstract}
Le Houérou, Patey and Yokoyama defined a parameterized version of $\alpha$-largeness to prove that $\WKL_0 + \RT^2_2$ is a $\forall \Sigma^0_3$-conservative extension of $\RCA_0 + \BSig_2$, where $\forall \Sigma^0_3$ is the universal set-closure of the class of $\Sigma^0_3$-formulas. We introduce a variant of this notion of largeness and obtain polynomial bounds, using a tree partition theorem based on Milliken's tree theorem. Thanks to the framework of forcing interpretation, this yields that any proof of a $\forall \Sigma^0_3$-sentence in the theory $\WKL_0 + \RT^2_2$ can be translated into a proof in $\RCA_0 + \BSig_2$ at the cost of a polynomial increase in size. 
\end{abstract}

\section{Introduction}

If a computably axiomatized theory~$T_0$ is a conservative extension of another such theory~$T_1$ for a syntactical class of sentences $\Gamma$, there is an algorithmic procedure which translates any $T_0$-proof~$p$ of a $\Gamma$-sentence into its shortest $T_1$-proof~$\hat p$. It is natural to wonder whether the translation $p \mapsto \hat p$ yields significantly longer proofs. If there exists a $T_0$-proof~$p$ of a $\Gamma$-sentence such that the length $|\hat p|$ of its translated $T_1$-proof is super-polynomial with respect to the length~$|p|$, we say that $T_0$ admits \emph{non-trivial speedup} over $T_1$ for $\Gamma$-formulas. In this case, the theory~$T_0$ is arguably useful for $\Gamma$-sentences, in that it sometimes produces significantly shorter proofs than~$T_1$ for such sentences. If on the other hand, there exists a polynomial~$Q$ such that $|\hat p| \leq Q(|p|)$ for every $T_0$-proof~$p$ of any $\Gamma$-sentence, then we consider that $T_0$ admits no significant speedup over~$T_1$ for $\Gamma$-formulas, since the various proof systems are mutually polynomially simulated.

The study of proof size and proof speedups traces back to G\"odel~\cite{davis1990kurt} (see Pudl\'ak~\cite{pudlak1998length} for an excellent survey). However, it was only in the 1990's that Avigad~\cite{avigad1996formalizing} showed that model-theoretic conservation theorems based on forcing could be formalized into proof-theoretic arguments and yield proof size analysis. He proved in particular that $\WKL_0$ is $\Pi^1_1$-conservative over $\RCA_0$ with no significant increase in the length of proofs. The technique was later used to formalize $\Pi^1_1$-conservation theorems over~$\RCA_0+\ISig_n$~\cite{Ikari-PhD,Ik-Yo} and $\forall \Sigma^0_2$-conservation theorems over~$\RCA_0$~\cite{kolodziejczyk2023ramsey} and over $\RCA_0^*$~\cite{katarzyna2025speedup}, where a $\forall \Sigma^0_n$-formula is of the form $\forall X \varphi(X)$ where $\varphi$ is $\Sigma^0_n$.

We are particularly interested in the first-order consequences of Ramsey's theorem for pairs and two colors ($\RT^2_2$). Patey and Yokoyama~\cite{patey2018proof} proved that $\WKL_0 + \RT^2_2$ is $\forall \Sigma^0_2$-conservative over~$\RCA_0$ using the notion of $\alpha$-largeness from Ketonen and Solovay~\cite{ketonen1981rapidly}. The proof was later simplified by Ko{\l}odziejczyk and Yokoyama~\cite{kolo2020some} to give explicit polynomial bounds for this notion of largeness. Ko{\l}odziejczyk, Wong and Yokoyama~\cite{kolodziejczyk2023ramsey} then formalized the construction to prove that the conservation proof does not increase significantly the length of proofs. This constrasts with the fact that $\WKL_0^* + \RT^2_2$ yields a non-elementary speedup over~$\RCA_0^*$ even for $\Sigma_1$-sentences~\cite{kolodziejczyk2023ramsey}. More recently, Le Houérou, Levy Patey and Yokoyama~\cite{houerou2026pi} defined a parameterized version of Ketonen and Solovay's notion of largeness and proved that $\WKL_0 + \RT^2_2$ is $\forall \Sigma^0_3$-conservative over $\RCA_0 + \BSig_2$, with explicit bounds computation. However, these bounds are exponential, leaving open whether the conservation theorem yields a significant proof speedup.

In this article, we define a variant of the parameterized version of $\alpha$-largeness \cite{houerou2026pi} and prove that Ramsey's theorem for pairs admits polynomial bounds with respect to this notion of largeness. Then, using the now-called notion of forcing interpretation introduced by Avigad~\cite{avigad1996formalizing} and developed in the context of $\alpha$-largeness by Ko{\l}odziejczyk, Wong and Yokoyama~\cite{kolodziejczyk2023ramsey}, we prove that the conservation theorem for $\WKL_0 + \RT^2_2$ over $\RCA_0 + \BSig_2$ for $\forall \Sigma^0_3$-sentences does not yield any significant proof speedup. The polynomial bounds computation for this new notion of largeness is non-trivial, and require to prove a partition theorem for trees based on a finite version of Milliken's tree theorem~\cite{milliken1979ramsey,dodos2016ramsey} with primitive recursive bounds. Our main theorem is therefore the following:

\begin{maintheorem}\label[maintheorem]{thm:rt22-polynomially-simulated}
$\RCA_0 + \BSig_2$ polynomially simulates $\WKL_0 + \RT_2^2$ with respect to $\forall \Sigma_3^0$ sentences.
\end{maintheorem}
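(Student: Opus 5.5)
The plan is to follow the forcing interpretation method of Avigad, as adapted to $\alpha$-largeness by Ko{\l}odziejczyk, Wong and Yokoyama, and to feed it with polynomial largeness bounds. The argument splits into three layers.

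\textbf{First layer: reduction to $\RT^2_2$ over $\RCA_0 + \BSig_2$.} Avigad's forcing interpretation shows that $\WKL_0$ is polynomially simulated by $\RCA_0$ for $\Pi^1_1$ sentences, and this relativizes to any base theory. I will therefore reduce to polynomially simulating $\RCA_0 + \BSig_2 + \RT^2_2$. Following the usual route, I would then decompose $\RT^2_2$ as $\EM + \ADS$ (or into a cohesiveness part and a stable part). Compositions of polynomial simulations are polynomial, so it suffices to handle each principle. By a standard argument, it is in fact enough to handle a single application of the principle, uniformly in a parameter set.

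\textbf{Second layer: a largeness notion with polynomial bounds.} Here I introduce the variant of the parameterized $\alpha$-largeness of \cite{houerou2026pi}. The goal is a notion of largeness, parameterized by a $\Sigma^0_2$-style witness structure, such that every $\forall\Sigma^0_3$ statement $\forall X\,\exists a\,\forall b\,\exists c\,\theta$ reduces to a $\Pi^0_2$-style combinatorial assertion about large sets.

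The key combinatorial lemma, provable in $\RCA_0+\BSig_2$ (and in fact in a weak fragment), will state the following. If a finite set is $\omega^{p(k)}$-large in the new sense, then for every 2-coloring of its pairs there is a homogeneous subset that is $\omega^k$-large in the same sense, with $p$ a polynomial. Proving this requires replacing the exponential blow-up of \cite{houerou2026pi} with a tree partition argument. I would organize the large set as a tree of blocks, apply a finite Milliken-type theorem with primitive recursive bounds to obtain a strongly embedded subtree on which the coloring depends only on the embedding type, and then read off a homogeneous large set level by level. I expect this to be the main obstacle: the bounds must stay polynomial in the exponent while remaining compatible with the parameterized $\Sigma^0_2$ witnesses.

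\textbf{Third layer: the forcing interpretation.} Using the largeness lemma, I define a forcing relation inside $\RCA_0+\BSig_2$. Conditions are large finite sets (or Mathias-like pairs with large reservoirs), and forcing of arithmetic formulas is defined syntactically by recursion on formula complexity. I then verify three things:
\begin{enumerate}[(i)]
\item each axiom of $\RCA_0+\BSig_2+\RT^2_2$ is forced, with proofs whose size is polynomial, and in fact uniformly bounded up to the size of the instance formula;
\item forcing is closed under logical rules, with linear overhead per inference step;
\item forcing of a $\forall\Sigma^0_3$ sentence implies its truth in the ground model.
\end{enumerate}
Point (iii) uses $\BSig_2$ together with the largeness lemma to pull $\Sigma^0_3$ witnesses back from generic sets.

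Translating a proof $p$ step by step then gives a $\RCA_0+\BSig_2$ proof of size $O(|p|^c)$. The delicate bookkeeping is to ensure that $\omega^k$ with $k$ depending on the formula sizes appears only as a schematic parameter, so that no proof of length exponential in $|p|$ is ever written down. I would handle this exactly as Ko{\l}odziejczyk–Wong–Yokoyama do, by proving the largeness lemma once as a single sentence quantified over $k$.
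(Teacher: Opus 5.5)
Your architecture (a Ko{\l}odziejczyk--Wong--Yokoyama forcing interpretation fed by a polynomial largeness bound obtained through a finite Milliken theorem) is the right one. However, the proposal is missing the ingredient that makes such a forcing interpretation exist at all.

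\textbf{The missing cut.} You propose to define the forcing relation directly inside $\RCA_0+\BSig_2$, with conditions that are ``large finite sets''. For the generic object to be infinite, a condition must be $\bbomega^x$-large${}^*(\theta)$ for some $x$ \emph{above a proper cut}. The generic structure is then the cut model $(I,\Cod(M/I))$, not an $\omega$-extension. An $\omega$-extension approach working over arbitrary models of $\RCA_0+\BSig_2$ would yield full $\Pi^1_1$-conservation of $\RT^2_2$ over $\RCA_0+\BSig_2$, which is open. But $\RCA_0+\BSig_2$ supplies no definable proper cut: under $\ISig_2$ the natural candidates $\Jcut$ and $\Jcut^*_\theta$ are all of $\NN$. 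The paper therefore proceeds in three steps.
\begin{enumerate}
\item It carries out the forcing interpretation in $\RCA_0+\BSig_2+\mathbb{I}'$. Here $\mathbb{I}$ is a new predicate for a proper cut closed under addition and multiplication, such that for every true $\forall y\exists z\forall t\,\theta$ there are $\bbomega^x$-large${}^*(\theta)$ sets with $x>\mathbb{I}$ and arbitrarily large minimum.
\item It obtains $\mathbb{I}'$ from the bare axiom $(\mathbb{I}1)$ by intersecting $\mathbb{I}$ with all the cuts $\Jcut^*_\theta$ and applying Solovay's shortening of cuts (\Cref{lem:poly-sim-I-I1}).
\item It eliminates $(\mathbb{I}1)$ by a case split (\Cref{lem:poly-sim-I1}). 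If $\ISig_2$ fails, a definable proper cut is available. If $\ISig_2$ holds, one uses that $\ISig_2$ proves uniform $\Pi_4$-reflection for $\BSig_2$; this is where the restriction to $\forall\Sigma^0_3$ conclusions enters.
\end{enumerate}
None of this is in your plan, and without it your steps (i)--(iii) have no setting in which to be carried out.

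\textbf{Why polynomiality matters.} The cut also explains why the bound must be polynomial, and your account of this is off. No exponent is ever chosen from formula sizes; doing so would compose the bound $P$ with itself along nested uses of $\RT^2_2$ and blow up. Instead, every condition is large above $\mathbb{I}$. \Cref{cor:rt22-largeness-star-theta-bound-no-sparsity}, with $P$ quadratic, returns a homogeneous set that is still large above $\mathbb{I}$ exactly because $\mathbb{I}$ is closed under multiplication. Shortening buys closure under $+$ and $\times$ at polynomial cost, but not under exponentiation. That is why the exponential bound of \cite{houerou2026pi} did not suffice.

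\textbf{Step (iii) needs a mechanism.} In the paper, conditions are pairs $(s,\theta)$. A ground-model truth $\forall y\exists z\forall t\,\theta(A,y,z,t)$ is forced by $(s,\theta(A))$ for the following reason. Any extension, cut past $\ell$, splits into $\theta(A)$-apart blocks $s_0<s_1$. Apartness then supplies a witness $z<\min s_1$ that works for all $t<\max s_1$ (\Cref{lem:polynomially-reflecting}). This is the whole point of the parameter $\theta$.

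\textbf{Your first layer.} It is unnecessary and not justified as stated.
\begin{itemize}
\item $\WKL_0$ is forced for free in the cut model.
\item Avigad's interpretation does not automatically relativize to a base containing the $\Pi^1_2$ axiom $\RT^2_2$, since instances computable from the generic path need solutions.
\item Composing theory-level simulations for $\EM$ and $\ADS$ would require a forcing interpretation over a base containing $\EM$.
\end{itemize}
The paper instead composes $\EM$ and $\ADS$ at the level of largeness bounds (\Cref{thm:rt22-largeness-star-theta-bound}). Milliken is used only for the $\EM$ part, and $\ADS$ comes from the largeness$(\theta)$ bound via \Cref{prop:large-to-new-large}.
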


The article is divided into three parts : in \Cref{sec:largeness}, we survey and compare three related notions of largeness, namely, Ketonen and Solvay's $\alpha$-largeness, Le Houérou, Levy Patey and Yokoyama's $\alpha$-largeness$(\theta)$, and our new notion of $\alpha$-largeness${}^*(\theta)$, for $\alpha$ of the form $\bbomega^n \cdot k$. In \Cref{sec:combinatorics}, we develop the framework of $\alpha$-largeness${}^*(\theta)$ and prove polynomial upper bounds for $\RT^2_2$-$\bbomega^n$-largeness${}^*(\theta)$. Last, in \Cref{sec:translation}, we combine this upper bound with the framework of forcing interpretation to prove \Cref{thm:rt22-polynomially-simulated}.

\subsection{Notation}

Most of the proofs are over a weak subsystem of second-order arithmetic. We therefore distinguish the formal set of integers~$\NN$ from the theory from the standard set of integers~$\omega$ from the meta-theory. For instance, if $\M = (M, S)$ is a non-standard model of second-order arithmetic, $\NN^\M = M$ and $\omega^\M = \{0^\M, 1^\M, \dots \} \subseteq M$. Similarly, we distinguish the formal ordinals of the theory and the ordinals from the meta-theory. We therefore write for instance $\bbomega^\bbomega$ for the formal counter-part of~$\omega^\omega$. Indeed, since $\bbomega^{\bbomega} = \sup_{n \in \NN} \bbomega^n$, if $\omega \subsetneq M$, then $\bbomega$ and a fortiori $\bbomega^\bbomega$ are not well-founded from the viewpoint of the meta-theory.

We often identify an integer $k$ with its ordinal $\{0, \dots, k-1\}$. 
Given a set $X \subseteq \NN$ and an integer~$n \in \NN$, we write $[X]^n$ for the collection of all subsets of~$X$ of size~$n$.
The set $[X]^n$ is in one-to-one correspondence with the set of all increasing ordered $n$-tuples over~$X$. Therefore, for simplicity of notation, given a coloring $f : [X]^n \to k$, we write $f(x_0, \dots, x_{n-1})$ for $f(\{x_0, \dots, x_{n-1}\})$ assuming that $x_0 < \dots < x_{n-1}$.

\section{Notions of largeness}\label[section]{sec:largeness}

Partial conservation theorems are closely related to ordinal computation for appropriate notions of largeness.
In this section, we survey three notions of largeness related to our polytime translation.

\subsection{Largeness and sparsity}

\begin{definition}
A family $\L \subseteq [\NN]^{<\NN}$ is a \textit{largeness notion} if any infinite set has a finite subset in $\L$
and $\L$ is closed under supersets. A finite set~$F$ is \emph{$\L$-large} if $F \in \L$.
\end{definition}

We are particularly interested in the closure of notions of largeness under combinatorial statements coming from Ramsey theory.
Given a function $f : [\NN]^n \to k$ and a finite set~$G = \{x_0 < \dots < x_{\ell-1}\} $, let $f_G : [\ell]^n \to k$ be defined by $f_G(i_0, \dots, i_{n-1}) = f(x_{i_0}, \dots, x_{i_{n-1}})$.

\begin{definition}\label[definition]{def:ramsey-like-Pi12-formula}
Given~$n, k \in \NN$, an \emph{$\RT$-like statement} is a $\Pi_2^1$-formula of the form 
$$(\forall f : [\mathbb{N}]^n \to k)[\Phi(f, \NN) \to (\exists Y)(Y \mbox{is infinite} \wedge \Psi(f,Y))]$$
where $\Phi(f, Y)$ and $\Psi(f, Y)$ are of the form $(\forall G \finsub Y) \Theta(f_G)$ with $\Theta$ a $\Delta^0_0$-formula.
\end{definition}

Ramsey's theorem for $n$-tuples and $k$-colors is the most obvious example of an $\RT$-like statement:
Given a coloring $f : [\NN]^n \to k$, a set $H \subseteq \NN$ is \emph{$f$-homogeneous} if $f$ is constant on $[H]^n$.

\begin{statement}[Ramsey's theorem]
Given $n, k \in \NN$, $\RT^n_k$ is the statement \qt{For every coloring $f : [\NN]^n \to k$, there is an infinite $f$-homogeneous set}.
\end{statement}

In this case, $\Phi_{\RT^n_k}(f,Y)$ is always true, and $\Psi_{\RT^n_k}(f, Y)$ means \qt{$Y$ is $f$-homogeneous}. Ramsey's theorem for pairs and its consequences is a central subject of study in reverse mathematics~\cite{seetapun1995strength,cholak2001strength,liu2012rt22}. In particular, the understanding of its first-order consequences is one of the main open questions~\cite{kolo2021search,patey2018proof,houerou2026pi}.

Our main focus in this article will be to study of largeness closure under applications of $\RT^2_2$. It is however often useful to decompose combinatorial statements into simpler ones. We shall therefore also be interested in the following $\RT$-like statements: Given a coloring $f : [\NN]^2 \to 2$, a set $H \subseteq \NN$ is \emph{$f$-transitive} if for every~$i < 2$ and $x < y < z \in H$, if $f(x, y) = f(y, z) = i$, then $f(x, z) = i$. If $\NN$ is $f$-transitive, we simply say that the coloring $f$ is transitive.

\begin{statement}[Erd\H{o}s-Moser theorem]
$\EM$ is the statement \qt{For every coloring $f : [\NN]^2 \to 2$, there is an infinite $f$-transitive set}.
\end{statement}

\begin{statement}[Ascending Descending Sequence]
$\ADS$ is the statement \qt{For every transitive coloring $f : [\NN]^2 \to 2$, there is an infinite $f$-homogeneous set}.
\end{statement}

The Erd\H{o}s-Moser theorem is originally a statement from graph theory~\cite{erdos1964representation} formulated in terms of tournaments, that is, complete oriented graphs. Infinite graphs are one-to-one correspondence with colorings $f : [\NN]^2 \to 2$. Similarly, the Ascending Descending Sequence is a statement from order theory, stating that every infinite linear order admits an infinite ascending or descending sub-sequence. Here again, linear orders are in one-to-one correspondence with transitive 2-colorings of pairs.

Both statements have been extensively studied in computability theory and reverse mathematics~\cite{bovykin2017strength,lerman2013separating,lerman1982recursive,hirschfeldt2007combinatorial}.
In particular, the first-order part of~$\ADS$ is well-understood, as $\RCA_0 + \ADS$ is a $\Pi^1_1$-conservative extension of~$\RCA_0 + \BSig_2$~\cite{chong2021pi11} and by applying the techniques of \cite{Ikari-PhD}, one can prove that $\RCA_0 + \ADS$ is polynomially simulated by $\RCA_0 + \BSig_2$.

\begin{definition}
Let $\Psf$ be an $\RT$-like statement with witnesses~$n$,$k$, $\Phi$ and~$\Psi$, and let $\L$ be a largeness notion. 
A finite set $F$ is \emph{$\Psf$-$\L$-large} if for every coloring $f : [F]^n \to k$ such that $\Phi(f,\NN)$ holds, there is $\L$-large subset $E \subseteq F$ such that $\Psi(f, E)$ holds.
\end{definition}

By extension, if $(\Psf_k)_{k \in \NN}$ is a family of $\RT$-like statements, then we say that a finite set~$F$ is \emph{$\Psf$-$\L$-large} if it is $\Psf_k$-$\L$-large for every~$k \leq \min F$. For instance, a set $F$ is $\RT^1$-$\L$-large if for every~$k \leq \min F$ and every coloring $f : \NN \to k$, there is an $\L$-large $f$-homogeneous subset of~$F$. Equivalently, since $\L$ is closed under supersets, $F$ is $\RT^1$-$\L$-large if for every~$k \leq \min F$ and every $k$-partition $E_0 \sqcup \dots \sqcup E_{k-1} = F$, there is some~$i < k$ such that $E_i$ is $\L$-large.
\bigskip

\noindent
\textbf{Sparsity.}
Every notion of largeness induces a dual notion of sparsity.

\begin{definition}
Let $\L \subseteq [\NN]^{<\NN}$ be a largeness notion.
A set $X \subseteq \NN$ is \emph{$\L$-sparse} if for every~$x, y \in X$ such that $x < y$,
$(x, y]_\NN \in \L$. 
\end{definition}

By closure of largeness notions under supersets, any subset of an $\L$-sparse set is again $\L$-sparse.
Any increasing function $g : \NN \to \NN$ induces a notion of largeness $\L_g = \{ F : g(\min F-1) < \max F \}$.
Then, an $\L_g$-sparse set $X$ is such that for $x, y \in X$ with $x < y$, $g(x) < y$. We simply say that $X$ is \emph{$g$-sparse}.
We will be particularly interested in $g$-sparsity for primitive recursive functions~$g$. Indeed, by adaptations of Ketonen and Solovay~\cite{ketonen1981rapidly}, one can assume that the sets are $g$-sparse for any fixed primitive recursive function~$g$, with only a constant exponent overhead in the computations. A set $X$ is \emph{exp-sparse} if it is $(x \mapsto 4^x)$-sparse, in other words, every~$x, y \in X$ such that $x < y$, then $4^x < y$.

Although sparsity can be translated into some largeness assumption, it is convenient to separate the largeness and sparsity hypothesis in the ordinal computation of largeness, as sparsity has a better behavior with respect to composition of $\RT$-like statements and avoids overheads redundancy.

\subsection{$\alpha$-largeness}\label[section]{sect:survey-regular-largeness}

Ketonen and Solovay~\cite{ketonen1981rapidly} defined a notion of largeness called $\alpha$-largeness for every~$\alpha < \epsilon_0$ to extend the Paris-Harrington theorem. It was then used to prove $\forall \Sigma^0_2$-conservation theorems for combinatorial theorems over~$\RCA_0$~\cite{towsner2024erdos,patey2018proof,kolo2020some}. A slight variant of this notion of largeness based on the Hardy hierarchy of fast-growing functions~\cite{hardy1904theorem} was extensively studied~\cite{ratajczyk1988combinatorial,kotlarski2019model,bigorajska2006some,kotlarski2007more,bigorajska2002combinatorics,smet2010partitioning}. 

Ketonen and Solovay's $\alpha$-largeness was originally defined for every $\alpha < \epsilon_0$. However, the parameterized variants presented in this article are only defined for $\alpha$ of the form $\bbomega^n \cdot k$ with $n \geq 0$ and $k \geq 1$. To uniformize the presentation, we shall therefore also restrict the definition of Ketonen and Solvay's notion of largeness to ordinals of this form. The definition below is different, but equivalent to the original one~\cite{ketonen1981rapidly}. 

\begin{definition}\label[definition]{defi:largeness-rca0}
A set~$F \finsub \NN$ is 
\begin{itemize}
    \item \emph{$\bbomega^0$-large} if $F \neq \emptyset$
    \item \emph{$\bbomega^{(n+1)}$-large} if $F \setminus \min F$ is  $(\bbomega^n \cdot \min F)$-large
    \item \emph{$\bbomega^n \cdot k$-large} if
there are $k$ $\bbomega^n$-large subsets of~$F$
$$
F_0 < F_1 < \dots < F_{k-1}
$$
where $A < B$ means that for all $a \in A$ and $b \in B$, $a < b$.
\end{itemize}
\end{definition}

Let $\Jcut$ be the $\forall \Sigma_2$-set of all~$n \in \NN$ such that $\bbomega^n$-largeness is a notion of largeness, that is, such that every infinite set contains an $\bbomega^n$-large subset.
$\RCA_0$ proves that $\Jcut$ is an additive cut, but nothing more, in the sense that there is a model~$\M = (M, S)$ of~$\RCA_0$ in which $\Jcut$ is of the form $\sup(a \cdot n : n \in \omega)$ for some non-standard element~$a \in M \setminus \omega$. Throughout this article, we shall always assume that $\min F \geq 3$ to avoid some pathological cases.
Ketonen and Solovay~\cite[Lemma 6.3]{ketonen1981rapidly} computed a first upper bound on $\RT^2_k$-$\bbomega$-largeness.

\begin{theorem}[Ketonen and Solovay~\cite{ketonen1981rapidly}]
Let $k \geq 2$. If $F \finsub \NN$ is $\bbomega^{k+4}$-large, then it is $\RT^2_k$-$\bbomega$-large.
\end{theorem}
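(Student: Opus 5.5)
The natural route is to go through the Erd\H{o}s--Moser/$\ADS$ style decomposition, done by hand with an Erd\H{o}s--Rado pre-homogeneous construction. Given $f : [F]^2 \to k$ with $k \leq \min F$, I will build a sequence $x_0 < x_1 < \dots$ in $F$ together with a nested sequence of reservoirs $F = R_0 \supseteq R_1 \supseteq \dots$. At stage $i$ let $x_i = \min R_i$, and use the $k$-coloring $y \mapsto f(x_i, y)$ of $R_i \setminus \{x_i\}$. This is an instance of $\RT^1_k$: choose $R_{i+1}$ to be a color class that is still large for the appropriate smaller ordinal. The resulting sequence is \emph{pre-homogeneous}: $f(x_i, x_j)$ depends only on $i$ for $j > i$. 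A second application of $\RT^1_k$ to the induced coloring $i \mapsto c_i$ then yields a homogeneous subset.

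The bookkeeping rests on a pigeonhole lemma for $\alpha$-largeness, which I will prove first by induction on $n$ directly from the definition. The lemma: if $E$ is $\bbomega^{n}\cdot k$-large and $E = E_0 \sqcup \dots \sqcup E_{k-1}$ (or more generally $\bbomega^{n+1}$-large with $k \leq \min E$ colors), then some $E_j$ is $\bbomega^{n}$-large, up to a shift of one in the exponent. Concretely, I aim for this statement: if $E$ is $\bbomega^{n+1}$-large and $k \le \min E$, then some color class is $\bbomega^{n}$-large.

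With the lemma in hand, stage $i$ consumes one exponent. Starting from $\bbomega^{k+4}$, let $R_i$ be $\bbomega^{m}$-large. Removing $x_i = \min R_i$ leaves a set that is $\bbomega^{m-1}\cdot x_i$-large. Since $x_i \geq \min F \geq k$, this is at least $\bbomega^{m-1}\cdot k$-large, so some color class is $\bbomega^{m-1}$-large. That shows this naive scheme gives only finitely many stages, namely about $k+4$ of them, far from the $\bbomega$-large (that is, more than $\min$ many) homogeneous set required.

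The real argument must therefore be subtler, and this is the main obstacle. The remedy I would try is to make the reservoir shrink by a multiplicative factor rather than by one exponent. Write $R_i$ as a block of $\bbomega^{k+3}$-large pieces, and at each stage apply $\RT^1_k$ only inside $\bbomega^{k+3}\cdot c$-large chunks, tracking the count $c$. Then $\bbomega^{k+4}$-largeness provides $x_0$-many pieces, each stage divides the count by at most $k$, and the exponent drop is reserved for the final two steps: the $\RT^1_k$ on the $c_i$, and converting to $\bbomega$-largeness of the homogeneous set relative to its own minimum. I expect the delicate point to be ensuring that the final homogeneous set $H$ satisfies $|H| > \min H$, that is, that the number of stages exceeds roughly $k \cdot \min H$. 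I would handle this by choosing $x_0$ and then counting stages as $\log_k$ of the piece count guaranteed by $\bbomega^{k+3}$-largeness, which is at least exponential in $x_0$ by the fast growth of $\bbomega^{3}$-large sets (an $\bbomega^3$-large set with minimum $a$ has maximum above $2^a a$). That leaves room for the $+4$ slack in the exponent.
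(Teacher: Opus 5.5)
The paper does not prove this statement; it quotes it from Ketonen and Solovay as background. Your proposal therefore has to stand on its own, and it does not. You correctly diagnose that the naive pre-homogeneous scheme gives only about $k+4$ stages, but your remedy leaves the real difficulty untouched. What must be controlled is not the number of stages but the value of the first stage of the winning colour. Your final set is $H=\{x_i : c_i=c\}$ (possibly plus one more point), and $\bbomega$-largeness requires $|H|>\min H=x_{i_c}$, where $i_c$ is the first stage of colour $c$. That value is not $x_0$, and nothing in your scheme bounds it. The colour class you keep may omit all small elements, so $\min R_{i+1}$ can jump far above $x_i$. In your piece picture, the adversary can colour the first $\lfloor c/k\rfloor-1$ pieces with one colour, forcing you to keep a class made of the later pieces. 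After a few stages the reservoir's minimum is the minimum of a late piece, which dwarfs the $\log_k c$ stages you can still perform. So the adversary can let the first colour occur at most $x_0-1$ times while pushing the reservoir upward, then switch to a second colour whose first value exceeds the number of remaining stages. The step ``number of stages exceeds roughly $k\cdot\min H$'' is therefore exactly what is unproved, and no count of pieces can prove it. What you need is that the set of values $\{x_0,x_1,\dots\}$ itself be $\RT^1_k$-$\bbomega$-large, which is a condition tying counts to values.

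Two further points confirm this.
\begin{itemize}
\item Your remedy needs the claim that a $k$-colouring of an $\bbomega^{k+3}\cdot c$-large reservoir has a class that is $\bbomega^{k+3}\cdot\lfloor c/k\rfloor$-large. Your pigeonhole lemma does not give this: it loses an exponent, and a colour class need not contain a single whole piece. The version quoted in the paper also assumes exp-sparsity. Moreover, $\bbomega^{k+4}$-largeness only guarantees $x_0$ pieces of type $\bbomega^{k+3}$, not exponentially many.
\item The pieces are never opened, so the exponent $k+3$ plays no role. With singletons as pieces, your scheme is just the classical cardinality Erd\H{o}s--Rado count. If it worked, it would show that $\bbomega^{c}$-large sets, for some fixed $c$, are $\RT^2_k$-$\bbomega$-large for every $k$. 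That contradicts the lower bound of Kotlarski et al.\ recalled in the paper, which forces the exponent to grow linearly in $k$.
\end{itemize}
That linear growth points to the missing idea. Each colour carries its own countdown: once colour $c$ first appears at value $v$, it needs $v$ further occurrences, a counter of type $\bbomega$. A working argument must maintain an ordinal invariant on the reservoir built from these $k$ counters, of size roughly $\bbomega^k$. It also needs a pigeonhole step in which the largeness demanded of each colour class depends on that colour's current counter. Finally, the statement has no hypothesis $k\le\min F$; you would have to pay for it, for instance by discarding an initial part of $F$.
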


Ko{\l}odziejczyk and Yokoyama~\cite{kolo2020some} gave an alternative proof, which emphasizes the separation between largeness and sparsity.\footnote{Ko{\l}odziejczyk and Yokoyama~\cite[Theorem 5.1]{kolo2020some} actually proved that if $F$ is $\bbomega^k+1$-large, $(x \mapsto 2x^{x^2})$-sparse and $\min F > k$, then it is $\RT^2_k$-$\bbomega$-large. If a set is $\bbomega^k \cdot 2$-large with $k \geq 2$, then it contains an $\bbomega^k+1$-large subset~$F$ such that $\min F > k$.}

\begin{theorem}[Ko{\l}odziejczyk and Yokoyama~\cite{kolo2020some}]\label[theorem]{thm:better-ketonen-solovay-bound}
Let $k \geq 2$. If $F \finsub \NN$ is $\bbomega^k \cdot 2$-large and $(x \mapsto 2x^{x^2})$-sparse, then it is $\RT^2_k$-$\bbomega$-large.
\end{theorem}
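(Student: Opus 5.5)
We prove the statement by induction on $k$, with the sparsity hypothesis absorbing the arithmetic overheads.

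\textbf{Base case $k=2$.} This uses the decomposition $\RT^2_2 = \EM + \ADS$. Given $f : [F]^2 \to 2$ with $F$ being $\bbomega^2\cdot 2$-large and sparse, we build a sequence $x_0 < x_1 < \dots$ in $F$ greedily, in the style of a pre-homogeneous (Erd\H{o}s--Moser) sequence. Let $x_0 = \min F$. Partition the remainder of $F$ according to the colour $f(x_0,\cdot)$ and keep the half that is $\bbomega^2$-large, or more precisely large for the ordinal needed at the next stage. Pick the next element from this half and iterate. This produces a set $H$ on which $f(x_i,x_j)$ depends only on $i$, a $1$-coloring $g(x_i)=f(x_i,x_{i+1})$. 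By $\RT^1_2$, a colour class of $H$ is then homogeneous for $f$.

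\textbf{Ordinal bookkeeping.} At each step we split a set that is $\bbomega^n\cdot m$-large into two parts. We use the standard fact, provable by induction on the ordinal, that if $E_0\sqcup E_1$ is $\bbomega^{n}\cdot 2m$-large then one $E_i$ is $\bbomega^n \cdot m$-large. Splitting halves the coefficient, so after $\ell$ steps we need a factor of $2^\ell$. The number of steps is about $x_0$, so the required coefficient is about $2^{x_0}$. The sparsity condition $y > 2x^{x^2}$ between consecutive elements ensures that coefficients of order $2^{x}$ are obtained by ``spending'' one $\bbomega$-exponent. That is, $\bbomega^{n+1}$-largeness above a sparse point $x$ yields $\bbomega^n\cdot \min$-largeness with $\min$ far bigger than $2^{x^2}$. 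Hence the homogeneous set has more than $\min H$ elements, so it is $\bbomega$-large.

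\textbf{Inductive step.} To go from $k$ to $k+1$ colours, we group colours, or equivalently run the same prehomogeneous construction with $k$-fold splitting at each step. Each increase of $k$ costs one extra $\bbomega$ in the exponent, since $k$-fold splitting along $\min F$ many steps needs coefficients $k^{x}$, which the sparsity again pays for. We then apply $\RT^1_k$-$\bbomega$-largeness, which holds trivially for $\bbomega$-large sets of size large enough relative to $k\cdot\min$, to the induced colouring of the sequence.

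\textbf{Main obstacle.} The hardest step is making the exponent bookkeeping exact: showing that one level of $\bbomega$ suffices per colour, so that the total is $\bbomega^k\cdot 2$ rather than $\bbomega^{k+c}$. I would first prove a precise splitting lemma for $\bbomega^n\cdot m$-largeness under sparsity, and then check that the extra factor $2$ is exactly enough to choose the first element large enough (so that $\min > k$) while retaining an $\bbomega^k$-large remainder.
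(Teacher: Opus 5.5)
\textbf{There is a genuine gap, and it sits in the exponent bookkeeping, which is the whole content of the theorem.} The paper does not prove this statement itself. It quotes Ko{\l}odziejczyk and Yokoyama's result for $\bbomega^k+1$-large, $(x\mapsto 2x^{x^2})$-sparse sets with $\min F>k$. The footnote then observes that an $\bbomega^k\cdot 2$-large set contains such a subset, which is what your closing remark about the factor $2$ is after. Your overall shape (a pre-homogeneous sequence, with one exponent per colour paid for by sparsity) is reasonable.

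The failing step is the claim that the number of steps is about $x_0$, so that a coefficient of about $2^{x_0}$ suffices and \qt{the homogeneous set has more than $\min H$ elements}. The set you output is a colour class $H'$ of the pre-homogeneous sequence $H$. Being $\bbomega$-large requires $|H'|>\min H'$, where $\min H'$ is the first $x_i$ at which that colour was kept. The colouring is adversarial, so the colour kept at $x_0$ may be forced never to be kept again. Then $\min H'=x_i$ for some $i\geq 1$, and by sparsity $x_i>2x_0^{x_0^2}$. You therefore need more than $x_i$ further steps, while symmetric halving of a coefficient $m$ only buys $\log_2 m$ steps.

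The same conflation breaks your final appeal to $\RT^1_k$: size large relative to $k\cdot\min H$ does not make any colour class $\bbomega$-large. The inductive step also fails. Grouping two colours and recursing would require the class of the merged colour to be $\RT^2_2$-$\bbomega$-large, which is far more than $\bbomega$-large, so it cannot yield one exponent per colour. Finally, the base case announces $\EM+\ADS$ but never uses it.

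\textbf{The missing idea is to re-budget every time a colour is kept for the first time.} Let $y_1=x_0<y_2<\dots$ be these first occurrences, and call phase $j$ the stretch during which exactly $j$ colours have occurred. Phase $j$ ends as soon as one of two things happens:
\begin{itemize}
\item some colour first kept at $y_l$ has been kept $y_l+1$ times, so its class is $\bbomega$-large;
\item a new colour is kept.
\end{itemize}
Hence phase $j$ lasts at most $j\,y_j+1$ steps.

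Run phase $j$ inside a reservoir that is $\bbomega^{k-j}\cdot m$-large. When a new colour is kept at $y_{j+1}$, take an $\bbomega^{k-j}$-large subset of what is left and unfold it at its minimum $r$. This gives an $\bbomega^{k-j-1}\cdot r$-large reservoir with $r>2y_{j+1}^{y_{j+1}^2}$. Since $k\,y+1\leq y^2$ and $k<y$ whenever $y>k$, this coefficient exceeds $k^{k\,y_{j+1}+1}$. That pays for the whole next phase at a loss of a factor $k$ per step, and this is exactly where $\min F>k$ is used. In phase $k$ plain cardinality suffices. One exponent is spent per colour, so $\{x_0\}\cup R$ with $R$ $\bbomega^k$-large and $x_0>k$ is enough.

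Your \qt{standard} splitting lemma ($\bbomega^n\cdot 2m$-large implies one side is $\bbomega^n\cdot m$-large, and its $k$-colour analogue) also needs care. It is an $\RT^1$-type bound for Ketonen--Solovay largeness, not a bare induction on the ordinal. The paper's table records such bounds only under sparsity and large-minimum hypotheses. So you must prove it, or replace it by a weaker version, accepting a worse constant per step and hence a stronger requirement on $\min F$ relative to $k$.
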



This bound is tight, in the sense that Kotlarski et al.~\cite[Theorem 5.4]{kotlarski2007more} proved that if a set is $\RT^2_k$-$\bbomega^n$-large, then it is $\bbomega^{kn-1}$-large\footnote{Kotlarski et al.~\cite{kotlarski2007more} used a slightly different notion of largeness, say 
$\alpha$-largeness${}^\dagger$, and proved that every $\RT^2_k$-$\bbomega^n$-large${}^\dagger$ set is $\bbomega^{kn-1}$-large${}^\dagger$. Both notions translate as follows: for $n \geq 1$, every $\bbomega^n$-large set is $\bbomega^n$-large${}^\dagger$ and every $\bbomega^n$-large${}^\dagger$ set is $\bbomega^{n-1}$-large.}. 
Patey and Yokoyama \cite{patey2018proof} proved that for every~$k \in \NN$, there is some~$n \in \NN$ such that $\RCA_0$ proves that every $\bbomega^n$-large set is $\RT^2_2$-$\bbomega^k$-large, and used this fact to deduce that $\RCA_0 + \RT^2_2$ is $\forall \Sigma^0_2$-conservative over~$\RCA_0$. Ko{\l}odziejczyk and Yokoyama~\cite{kolo2020some} made some explicit bound computation for $\RT^2_2$ and other $\RT^n_k$-like statements.

\begin{theorem}[Ko{\l}odziejczyk and Yokoyama~\cite{kolo2020some}]\label[theorem]{thm:kolo-bound-rt22-omegan-large}
Let $n \geq 2$. If $F \finsub \NN$ is $\bbomega^{144(n+1)}$-large and exp-sparse, then it is $\RT^2_2$-$\bbomega^n$-large.
\end{theorem}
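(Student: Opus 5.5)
The plan is to split $\RT^2_2$ into $\EM$ followed by $\ADS$, and to bound each step separately in terms of $\alpha$-largeness, keeping the sparsity hypothesis apart so that it can be reused at each stage. Given $f : [F]^2 \to 2$, I first find a large $f$-transitive subset $E \subseteq F$. On $E$ the coloring is transitive, so it is a linear order, and I then extract a homogeneous subset, namely an ascending or descending chain. The target is two lemmas with linear ordinal overhead.

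\textbf{Lemma A ($\ADS$).} Every $\bbomega^{2n}$-large set, or more loosely $\bbomega^{cn}$-large for a small constant $c$, is $\ADS$-$\bbomega^n$-large. I prove this by induction on $n$, using a Ketonen–Solovay/Erd\H{o}s–Szekeres style argument.
\begin{itemize}
\item Decompose the large set into a block $x = \min$ followed by $\bbomega^{\cdot}\cdot x$ many consecutive blocks.
\item Color each later element by whether it lies above or below $x$ in the order.
\item By pigeonhole on the blocks ($\RT^1$ with $2$ colors, which costs only a factor $\bbomega$ thanks to the standard lemma that $\bbomega^{m+1}$-large sets are $\RT^1_2$-$\bbomega^m$-large), one side contains an $\bbomega^{\cdot}\cdot x$-large family.
\item Combine the homogeneous chains obtained recursively in the ascending and descending directions, as in the usual $\alpha\#\beta$ bound for linear orders. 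This gives an ordinal sum bound of the form $\bbomega^{a}\#\bbomega^{b}$ with $a+b$ roughly $2n$.
\end{itemize}

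\textbf{Lemma B ($\EM$).} Every $\bbomega^{cn}$-large exp-sparse set is $\EM$-$\bbomega^n$-large for a constant $c$ (aiming at $c \le 70$). Here I follow the Kołodziejczyk–Yokoyama grouping method.
\begin{itemize}
\item Using exp-sparsity, partition $F$ into consecutive intervals.
\item For each pair of intervals, the restriction of $f$ has only $2^{|\text{interval}|^2}$ possible ``types''. Exp-sparsity is exactly what keeps the number of these types below the next element of $F$.
\item Pigeonhole then yields subsets on which the cross-interval colors are uniform. The induction on $n$ passes from $\bbomega^{n}$ to $\bbomega^{n+1}$ at a cost of a constant number of exponents, using \Cref{thm:better-ketonen-solovay-bound} as the base step to obtain transitivity inside $\bbomega$-large pieces.
\item Glue transitive pieces together by choosing, at each level, a ``minimal-type'' element whose neighborhood colors are forced. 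This is the standard construction of a transitive subtournament via a tree of iterated neighborhoods.
\end{itemize}

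Composing the two lemmas gives a bound of the form $\bbomega^{c\cdot c'(n+1)}$, and I would tune the constants to reach $144(n+1)$. The exact value is not essential: the point is that the exponent is linear in $n$, and exp-sparsity is preserved when passing to subsets.

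The main obstacle is Lemma B. The $\EM$ step must be carried out without exponential blow-up in the ordinal, and the recursion has to be set up so that the exp-sparsity hypothesis alone, applied repeatedly inside the induction, absorbs all counting of types. Concretely, the delicate point is verifying that the number of colorings of pairs between consecutive elements is dominated by $4^x$. If this fails at some level, I would first try strengthening the induction hypothesis to carry a uniform family of sparse sub-blocks.
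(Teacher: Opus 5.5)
The paper does not prove this statement. It quotes it from Ko{\l}odziejczyk and Yokoyama, and its table lists their $\EM$ bound $\bbomega^{36m}$ and $\ADS$ bound $\bbomega^{4n+4}$; indeed $144(n+1)=36\cdot(4n+4)$. So your $\EM$-then-$\ADS$ architecture, with exp-sparsity inherited by the transitive subset, is the right one. The genuine gap is Lemma B, which carries essentially all the difficulty: you leave it open, and the mechanisms you sketch do not supply its inductive step.

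First, counting types of pairs of intervals does not give uniform cross-colours. To thin a \emph{family of blocks} by pigeonhole, the number of colours must be at most the minimum of that family. The number of types of a block, however, grows with the block, and blocks must be large enough to carry the inductive hypothesis. What exp-sparsity does control is the type of a single \emph{point} over everything below it: if $x$ is the predecessor of $y$ in $X$, there are at most $2^{|X\cap[0,x]|}\le 4^x<y$ such types. So the grouping has to be built pointwise. Colour each point by its colours to previously fixed points, and thin inside each block with the $\RT^1$ lemma ($\bbomega^{m+1}$-large and exp-sparse implies $\RT^1$-$\bbomega^m$-large), as in \Cref{lem:new-stabilize-bounds}. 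Only then does $f(x,y)$, for $x,y$ in blocks $i<j$, depend on $(i,j)$ alone.

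Second, and more importantly, a grouping only reduces the problem to the induced colouring $g$ of pairs of block indices. Triangles with two vertices in one block are harmless, but a triangle with vertices in three different blocks is governed by $g$ alone. Hence a union of transitive blocks is transitive exactly when the chosen indices are $g$-transitive. Choosing a new minimum $x_0$ and stabilising $f(x_0,\cdot)$ by $\RT^1$ is fine. But to obtain an $\bbomega^{m+1}$-large transitive set you must then select at least $x_0$ blocks on which $g$ is homogeneous (or transitive). This is a Ramsey statement for pairs with $\bbomega$-large output on the set of block maxima, and it is needed at \emph{every} inductive step. It is what makes the cost per level constant, and it is where an $\RT^2_k$-$\bbomega$-largeness bound is used; in this paper's setting, \Cref{prop:partition-theorem-coloring-products} plays that role in \Cref{thm:em-largeness-star-theta-bound}.

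Your plan uses \Cref{thm:better-ketonen-solovay-bound} only as a base case, and replaces this step by iterated neighbourhoods without any cost estimate. Moreover, \Cref{thm:better-ketonen-solovay-bound} requires $(x\mapsto 2x^{x^2})$-sparsity, which exp-sparsity does not give. You would need Ketonen and Solovay's sparsity-free $\bbomega^{k+4}$ bound, or a constant-cost thinning.

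Lemma A is also only gestured at. $\bbomega^a\#\bbomega^b$ has exponent $\max(a,b)$, whereas the Erd\H{o}s--Szekeres phenomenon is multiplicative. Gluing monotone chains from different blocks again requires a Ramsey step on block indices. Finally, since the statement fixes the exponent $144(n+1)$, ``tuning the constants'' has to be actually carried out.
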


Ko{\l}odziejczyk and Yokoyama~\cite[Lemma 2.1]{kolo2020some} showed that if $X$ is $\bbomega^{n+4}$-large then it contains an $\bbomega^n$-large exp-sparse subset.
In the same way that Ramsey's theorem for pairs is traditionally proven inductively using the infinite pigeonhole principle, Ko{\l}odziejczyk and Yokoyama~\cite{kolo2020some} based their computation of \Cref{thm:kolo-bound-rt22-omegan-large} on the following lemma:

\begin{lemma}[Ko{\l}odziejczyk and Yokoyama~\cite{kolo2020some}]
If $F \finsub \NN$ is $\bbomega^{n+1}$-large and exp-sparse, then it is $\RT^1$-$\bbomega^n$-large.
\end{lemma}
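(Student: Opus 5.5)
The plan is to prove a stronger, additive statement by induction on~$n$, and then specialize it. Fix $k \leq \min F$ and a partition $F = E_0 \sqcup \dots \sqcup E_{k-1}$. By definition, $F \setminus \min F$ is $\bbomega^n \cdot \min F$-large, so it contains $\bbomega^n$-large blocks $F_0 < F_1 < \dots < F_{m-1}$ with $m = \min F \geq k$. It is tempting to argue block by block, but this fails: inside a single block $F_j$ the colouring may split $F_j$ into $k$ pieces, none of them $\bbomega^n$-large. So the induction hypothesis has to be stated in a form that adds up across blocks.

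I would prove, by induction on~$n$, the following claim for exp-sparse~$F$:
\[
\text{if } F \text{ is } \bbomega^n \cdot \Bigl(1 + \textstyle\sum_{i<k} (\ell_i - 1)\Bigr)\text{-large and } F = E_0 \sqcup \dots \sqcup E_{k-1}, \text{ then some } E_i \text{ is } \bbomega^n \cdot \ell_i\text{-large.}
\]
\begin{itemize}
\item For $n = 0$ this is the finite pigeonhole principle: a set with $1 + \sum_i (\ell_i - 1)$ elements, split into $k$ parts, has some part with at least $\ell_i$ elements.
\item For the inductive step one decomposes an $\bbomega^{n+1} \cdot L$-large set into $L$ consecutive $\bbomega^{n+1}$-large blocks. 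Each block $B$ is in turn $\min B$ many $\bbomega^n$-large pieces after removing $\min B$. One then applies the $n$-case inside blocks, with the $\ell_i$ chosen as the required multiplicities $\min E_i$-type quantities, and keeps track of how many pieces each colour has accumulated.
\end{itemize}
The statement of the lemma is the instance $n+1 \to n$. Take $\ell_i = 1$ at level $n+1$, or more directly apply the level-$n$ claim to $F \setminus \min F$ with all $\ell_i = 1$ over each block, and collect greedily. Since $k \leq \min F = m$, there are enough blocks to pigeonhole on the colour that wins in each block.

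The main obstacle is the shift of minima. When we pass from $F$ to a colour class $E_i$, showing $E_i$ is $\bbomega^{n+1}$-large (or $\bbomega^n \cdot \ell$-large at the next level) requires $\min E_i$ many $\bbomega^n$-large pieces. Here $\min E_i$ can be much larger than $\min F$, while the count of available pieces was governed by $\min F$. This is exactly where exp-sparsity must be used.

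My approach is to discard a short initial segment. If $\min E_i = x \in F$, then every later element $y$ of~$F$ satisfies $y > 4^x$. Consequently the blocks lying after~$x$ have minima exceeding~$4^x$, and so each of them is $\bbomega^n \cdot 4^x$-large (after removing its own minimum). This supplies far more than the $x \cdot k$ pieces needed to pay for the new minimum and for all $k$ colours. Concretely, I would first try to show that, for exp-sparse~$F$, an $\bbomega^n$-large block $B$ with $\min B > 4^x$ already splits into at least $2^x$ disjoint consecutive $\bbomega^{n-1}$-large pieces. The inductive bookkeeping then only loses a factor absorbed by $4^x \geq k \cdot x \cdot 2^x$.

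Verifying this counting inequality, and checking that the sparsity hypothesis is inherited by the subsets $F_j$ and $E_i$ used in the recursion, is the delicate part. Everything else is routine unwinding of \Cref{defi:largeness-rca0}.
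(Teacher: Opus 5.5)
\paragraph{The additive claim is false for $n \geq 1$.} Take $n=1$, $k=2$ and $\ell_0=\ell_1=1$, so that $1+\sum_i(\ell_i-1)=1$. The claim then says that whenever an exp-sparse $\bbomega$-large set is split into two parts, one part is $\bbomega$-large. Let $F=\{x_0<x_1<\dots<x_{x_0}\}$ be exp-sparse. Then $F$ is $\bbomega$-large. Split it as $E_0=\{x_i : i \mbox{ even}\}$ and $E_1=\{x_i : i \mbox{ odd}\}$. We get $|E_0\setminus\{x_0\}|<x_0$ and $|E_1\setminus\{x_1\}|<x_0<x_1$, so neither part is $\bbomega$-large.

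This is exactly the phenomenon you describe in your own first paragraph, and your claim with all $\ell_i=1$ denies it. The count $1+\sum_i(\ell_i-1)$ is the finite pigeonhole count and does not lift to $n\geq 1$. For the specialization you want, it would have to become something like $\bbomega^n\cdot k$, and then the inductive step, which you only gesture at, is where all the difficulty lies.

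Your final derivation also fails on its own terms. Taking $\ell_i=1$ at level $n+1$ asks for an $\bbomega^{n+1}$-large colour class, which is more than the lemma and false. Applying the level-$n$ claim with $\ell_i=1$ inside each block and pigeonholing on a ``winning colour'' is the block-by-block argument you already rejected: a block need not have any winning colour.

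\paragraph{The shift of minima is not resolved.} You are right that this is the crux. But your observation that an $\bbomega^n$-large block $B$ with $\min B>4^x$ has more than $4^x$ pieces that are $\bbomega^{n-1}$-large is just the definition. There is also a level slip: $B\setminus\{\min B\}$ is $\bbomega^{n-1}\cdot\min B$-large, not $\bbomega^n\cdot 4^x$-large. Nothing in your bookkeeping turns such pieces, one level down, into a colour class that is large relative to its own, possibly huge, minimum.

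\paragraph{The missing idea.} Case~2 of \Cref{prop:largeness-partition}, the paper's largeness${}^*(\theta)$ analogue of this lemma, shows the way: \emph{never use the colour class's own minimum}.
\begin{enumerate}
\item Take the first $k\leq\min F$ blocks $X_0<\dots<X_{k-1}$ of $F\setminus\{\min F\}$. Either $X_0$ is monochromatic, or some $t\geq 1$ has every colour of $X_t$ already occurring in $X_0\cup\dots\cup X_{t-1}$. Otherwise more than $k$ colours would appear.
\item By exp-sparsity, $\min X_t>4^{\max X_{t-1}}\geq k^2\max X_{t-1}$. Hence $X_t\setminus\{\min X_t\}$ is $\bbomega^{n-1}\cdot k^2\max X_{t-1}$-large.
\item Extract from it a homogeneous $\bbomega^{n-1}\cdot\max X_{t-1}$-large set $Y$ of some colour $i$.
\item Prepend an earlier element $c\leq\max X_{t-1}$ of colour $i$. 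Then $\{c\}\cup Y$ is $\bbomega^n$-large and homogeneous.
\end{enumerate}
Step 3 needs an induction hypothesis with multiplicities, for instance: ``if $X$ is exp-sparse, $k\leq\min X$ and $X$ is $\bbomega^n\cdot k^2\ell$-large, then every $f:X\to k$ has a homogeneous $\bbomega^n\cdot\ell$-large subset.'' Prove it by cutting the blocks into $k\ell$ groups of $k$ consecutive blocks, running the trick above in each group, and pigeonholing the resulting $k\ell$ homogeneous sets. The lemma is then the one-group instance of this inductive step.
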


Ko{\l}odziejczyk, Wong and Yokoyama~\cite{kolodziejczyk2023ramsey} used the polynomial nature of the bound for $\RT^2_2$-$\bbomega^n$-largeness to prove that $\RCA_0 + \RT^2_2$ is polynomially simulated by $\RCA_0$ with respect to $\forall \Sigma^0_2$-sentences. In an incoming paper, Le Houérou and Patey~\cite{houerou2026partition} give a tight upper bound to $\RT^2_k$-$\bbomega^n$-largeness, up to an additive constant: In what follows, let $R_k(d)$ be the least number~$R$ such that for every coloring $f : [R]^2 \to k$, there is an $f$-homogeneous set of size~$d$.

\begin{theorem}[Le Houérou and Patey~\cite{houerou2026partition}]
Let $n, k \geq 1$. If $X \finsub \NN$ is $\bbomega^{kn} \cdot 4$-large and $(x \mapsto x^{R_{x}(2x + 2)})$-sparse with $kn \leq \min X$, then it is $\RT^2_k$-$\bbomega^n$-large.
\end{theorem}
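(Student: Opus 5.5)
We prove the statement by induction on $n$ through the usual route: first obtain homogeneity for the colouring's restriction to a "prehomogeneous" set, then pass down to the final homogeneous set. Concretely, given $k'\le \min X$ and $f:[X]^2\to k'$ (we may assume $k'=k$, since fewer colours is easier), we build a \emph{$k$-coloured tree} $T\subseteq X$. The tree is constructed greedily: each new element $x$ is placed as a child of the deepest node $y$ such that, for every ancestor $z\preceq y$, the colour $f(z,x)$ equals the colour labelling the edge leaving $z$ toward $x$. The colouring then becomes "end-homogeneous along branches": for $z\prec x$ in $T$, the value $f(z,x)$ depends only on $z$ and on which child-subtree of $z$ contains $x$.

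The core combinatorial step is a largeness computation on this tree. Each node has at most $k$ relevant branches, one per colour. We show that if $X$ is $\bbomega^{kn}\cdot 4$-large, then $T$ contains a branch $B$ of the form $z_0\prec z_1\prec\cdots$ which is $\bbomega^{n}$-large after splitting by colour in the following sense. For each $i<k$, let $B_i$ be the set of $z_j$ whose edge to $z_{j+1}$ has colour $i$. We want some $B_i$ to be $\bbomega^n$-large, and any such $B_i$ is then $f$-homogeneous of colour $i$.

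To get this we prove by induction on the ordinal an auxiliary lemma. If a node's subtree is $\bbomega^{a_0+\dots+a_{k-1}}$-large, with the ordinal exponents tracked as a $k$-tuple $(a_0,\dots,a_{k-1})$ in natural (Hessenberg-style) sum, then the branch it generates reaches, for some $i$, an $\bbomega^{a_i}$-large set of nodes with colour $i$. The key fact driving this induction is the following. When a node $z$ is added, the elements below it are split among at most $k$ child subtrees, and the union is $(\bbomega^{m}\cdot\min)$-large after removing $z$. Hence by pigeonhole some child subtree receives roughly a $1/k$ share, in the sense of largeness. It is here that the $\RT^1$ lemma of Ko{\l}odziejczyk and Yokoyama is used, and that the sparsity hypothesis $x\mapsto x^{R_x(2x+2)}$ is consumed. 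Sparsity guarantees that between consecutive elements there is room to absorb the finite Ramsey number $R_x(2x+2)$ of bad configurations, which occur at the bottom levels where $\min$ is small relative to $k$. The hypothesis $kn\le\min X$ ensures that the exponent bookkeeping starts with enough room.

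The main obstacle is this tight ordinal accounting. Losing even one factor of $\bbomega$ per colour would give $\bbomega^{k(n+1)}$ instead of $\bbomega^{kn}\cdot 4$. To avoid it I would first try to never pass through $\RT^1$ at a full $\bbomega$-level drop. Instead, the choice of child subtree decreases exactly one coordinate $a_i$ by a "$\bbomega^{a_i-1}\cdot\min$" amount, and the multiplier $4$ absorbs constant losses at the top: one from discarding $\min X$, one from the tree root, and two from the finitely many small levels handled by the sparsity/Ramsey-number argument. If the tree argument fails to be this tight, the fallback is to prove $\RT^2_k$ by induction on $k$, merging two colours into one and applying the $\ADS$/$\EM$ decomposition. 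I would expect that route to give a $\bbomega^{kn+c}$ bound, which is weaker.
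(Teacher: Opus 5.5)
The paper only quotes this theorem from the forthcoming work of Le Hou\'erou and Patey, so there is no proof here to compare against. Judged on its own, your proposal has a genuine gap: the auxiliary lemma carries the entire content of the theorem, and the accounting you sketch for it does not close.

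First, your induction is indexed by integer exponents $(a_0,\dots,a_{k-1})$. But once a node $z$ is put into colour class $i$, what remains to be found in colour $i$ is an $\bbomega^{a_i-1}\cdot z$-large set. A few steps later the residual targets look like $\bbomega^{a_i-1}\cdot(z-1)+\bbomega^{a_i-2}\cdot w$. A hypothesis that only produces a single $\bbomega^{a_i-1}$-large block cannot be reapplied. The lemma therefore has to be stated for arbitrary residual ordinals $\alpha_0,\dots,\alpha_{k-1}<\bbomega^{\bbomega}$, with an explicit potential $\Gamma(\alpha_0,\dots,\alpha_{k-1})$.

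Second, the ``roughly a $1/k$ share'' step fails for the potential your exponent count suggests. That potential is linear in the coefficients, i.e.\ the natural product $\alpha_0\otimes\cdots\otimes\alpha_{k-1}$. Take $k=2$ and residual targets $\bbomega^p\cdot c$, $\bbomega^q\cdot d$ with $p,q\geq 1$ and $c,d\geq 2$. This happens for $n\geq 2$ as soon as one node has been placed in each colour. Then $\Gamma=\bbomega^{p+q}\cdot cd$. After removing the current minimum $w$ you are left with $\bbomega^{p+q}\cdot(cd-1)+\bbomega^{p+q-1}\cdot w$. The two children require $\bbomega^{p+q}\cdot (c-1)d+\cdots$ and $\bbomega^{p+q}\cdot c(d-1)+\cdots$. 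Since $f(w,\cdot)$ splits the rest arbitrarily, the best available pigeonhole is the natural-sum one. But $2cd-c-d>cd-1$ whenever $(c-1)(d-1)\geq 1$. Sparsity cannot repair this, because the deficit sits in the leading coefficient, while sparsity only enlarges the coefficient $w$ of the lower term.

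To make a tree/Erd\H{o}s--Szekeres argument work, you would need two things:
\begin{itemize}
\item coefficients within each exponent level that grow multinomially, as in $R(s,t)\leq\binom{s+t-2}{s-1}$, e.g.\ $F(c,d)-1\geq F(c-1,d)+F(c,d-1)$;
\item a proof that every time a residual target drops a level ($\bbomega^{p}\mapsto\bbomega^{p-1}\cdot w$), the new coefficients, which are exponential in $w$, fit under the gap that sparsity guarantees after $w$.
\end{itemize}
So sparsity is consumed at every level drop along the branch. It is not consumed at ``finitely many bottom levels where $\min$ is small relative to $k$''; there are none, since $k\leq kn\leq\min X$.

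The Ko{\l}odziejczyk--Yokoyama $\RT^1$ lemma cannot be used at tree nodes at all. It costs a whole exponent per node, and a branch has far more than $a_0+\cdots+a_{k-1}$ nodes. None of this bookkeeping is in the proposal, and the closing allocation of the factor $4$ is not an argument.

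Your assessment of the fallback is probably backwards. The table in Section~2 credits the same source with $\bbomega^n\cdot 2$ for $\EM$, $\bbomega^{2n}\cdot 2$ for $\ADS$ and $\bbomega^{2n}\cdot 4$ for $\RT^2_2$. That is exactly an almost free Erd\H{o}s--Moser step followed by an $\ADS$ step. This suggests the sharp bound comes from a $k$-colour Erd\H{o}s--Moser step, followed by the Erd\H{o}s--Szekeres labelling argument for colourings whose colour classes are transitive. A finite Ramsey number with at most $\min X$ colours, such as $R_x(2x+2)$, would naturally enter the first step. For transitive colourings, linear accounting (the natural product $\bbomega^n\otimes\cdots\otimes\bbomega^n=\bbomega^{kn}$) is indeed the right one.
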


Based on the known upper bounds of the Ramsey numbers, the authors deduced that if $X \finsub \NN$ is $\bbomega^{kn+3}$-large and $\min X \geq 17$, then it is $\RT^2_k$-$\bbomega^n$-large. 

\subsection{$\alpha$-largeness$(\theta)$}

Le Houérou, Levy Patey and Yokoyama~\cite{houerou2026pi} generalized the notion of $\alpha$-largeness to a parameterized version, in order to prove conservation theorems for $\forall \Sigma^0_3$-sentences. They used this framework to prove that $\RCA_0 + \RT^2_2$ is $\forall \Sigma^0_3$-conserva\-tive over~$\RCA_0 + \BSig_2$, and Le Houérou and Levy Patey~\cite{houerou2023conservation} proved a similar conservation result for the Ordered Variable Word theorem.
In what follows, fix a $\Delta^0_0$-formula $\theta(x, y, z)$, and let $T \equiv \forall x \exists y \forall z \theta(x, y, z)$.

\begin{definition}
Two finite sets~$E < F$ are \emph{$\theta$-apart} if 
$$\forall x < \max E\ \exists y < \min F\ \forall z < \max F\ \theta(x, y, z)$$
\end{definition}

Note that $\theta$-apartness is a transitive relation. Moreover, if $X < Y$ are $\theta$-apart and $X_0 \subseteq X$ and $Y_0 \subseteq Y$, then $X_0, Y_0$ are $\theta$-apart. The following inductive definition is very similar to that of \Cref{defi:largeness-rca0}, except that the sets $F_0, \dots, F_{k-1}$ are required to be pairwise $\theta$-apart.

\begin{definition}\label[definition]{defi:largeness-rca0-bsig2-t}
A set~$F \finsub \NN$ is 
\begin{itemize}
    \item \emph{$\bbomega^0$-large$(\theta)$} if $F \neq \emptyset$
    \item \emph{$\bbomega^{(n+1)}$-large$(\theta)$} if $F \setminus \min F$ is  $(\bbomega^n \cdot \min F)$-large$(\theta)$
    \item \emph{$\bbomega^n \cdot k$-large$(\theta)$} if
there are $k$ pairwise $\theta$-apart $\bbomega^n$-large$(\theta)$ subsets of~$F$
$$
F_0 < F_1 < \dots < F_{k-1}
$$
\end{itemize}
\end{definition}

Let $\Jcut_\theta$ be the $\forall \Sigma_2$ set of all $n \in \NN$ such that $\bbomega^n$-largeness$(\theta)$ is a notion of largeness. As in the previous largeness situation, Le Houérou et al.~\cite{houerou2026pi} showed that $\RCA_0 + \BSig_2 + T$ proves that $\Jcut_\theta$ is an additive cut, and constructed (unpublished) a model $\M = (M, S)$ of $\RCA_0 + \BSig_2 + T$ in which $J_\theta$ is again of the form $\sup(a \cdot n : n \in \omega)$ for some~$a \in M \setminus \omega$. Following the work of Ko{\l}odziejczyk and Yokoyama~\cite{kolo2020some}, the authors obtained an explicit bound for $\RT^2_2$-$\bbomega^n$-largeness$(\theta)$, except that it is exponential.

\begin{theorem}[Le Houérou, Levy Patey and Yokoyama~\cite{houerou2026pi}]
If $F \finsub \NN$ is $\bbomega^{(16^6 + 1)^{4n+4}}$-large$(\theta)$, then it is $\RT^2_2$-$\bbomega^n$-large$(\theta)$.
\end{theorem}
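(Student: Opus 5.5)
We will follow the strategy of Ko{\l}odziejczyk and Yokoyama for \Cref{thm:kolo-bound-rt22-omegan-large}, decomposing $\RT^2_2$ into $\EM$ followed by $\ADS$ and making every step $\theta$-uniform. The invariant throughout is that the sets produced at each stage are unions of pairwise $\theta$-apart blocks. This is safe because $\theta$-apartness is transitive and inherited by subsets, so every combinatorial step only needs to respect the block structure coming from the definition of largeness$(\theta)$.

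\textbf{Step 1: pigeonhole.} Prove that every $\bbomega^{n+1}$-large$(\theta)$ set, suitably sparse, is $\RT^1$-$\bbomega^n$-large$(\theta)$ with a constant overhead. We argue by induction on $n$, unfolding the definition. An $\bbomega^{n+1}$-large$(\theta)$ set $F$ splits into $\min F$ pairwise $\theta$-apart $\bbomega^n$-large$(\theta)$ blocks. For a $k$-coloring with $k\le \min F$, we recursively find in each block a large$(\theta)$ monochromatic piece, and then count colours across blocks. Sparsity absorbs the loss from the fact that each colour class inherits only some of the blocks.

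\textbf{Step 2: $\ADS$.} For transitive colorings, show that $\bbomega^{n+c}$-large$(\theta)$ implies $\ADS$-$\bbomega^n$-large$(\theta)$. Transitivity gives a linear-order structure, and we can argue block by block: each $\theta$-apart block contributes either to an ascending or a descending chain. A Dilworth/Erd\H{o}s--Szekeres style counting with the $\RT^1$ bound from Step~1 should keep the overhead at an additive or multiplicative constant in the exponent $n$.

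\textbf{Step 3: $\EM$.} This is the main obstacle. Classically one builds a transitive subset greedily by a tree of $1$-colorings, $x\mapsto f(a,x)$, iterated. Each application of Step~1 costs one level of $\bbomega$, and the depth needed is about $n$ levels per target exponent, yielding a product-type bound. Here the apartness constraint forbids the usual amalgamation of pieces coming from different branches, since two branches may not be $\theta$-apart. I would handle this by performing the whole greedy construction inside a single block at each scale, and only then combining $\min F$ blocks. The cost of this is that at each of the $\approx 4n$ levels the required exponent gets multiplied by a constant (which I would try to make $16^6+1$, arising from composing the Step~1 and Step~2 overheads and the sparsification lemma analogous to \cite[Lemma 2.1]{kolo2020some}, lifted to largeness$(\theta)$).

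\textbf{Step 4: composition.} Combine Steps~2 and~3. A set that is $\bbomega^{m}$-large$(\theta)$ with $m=(16^6+1)^{4n+4}$ contains, for any $f$, a transitive subset which is $\bbomega^{n'}$-large$(\theta)$ for $n'$ large enough. Applying Step~2 to that subset gives a homogeneous $\bbomega^n$-large$(\theta)$ subset. The exponential shape of the bound reflects the multiplicative loss per level in Step~3, which I expect to be the delicate computation to verify.
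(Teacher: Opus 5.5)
Your Step~1 contains a genuine gap. The paper does not reprove this cited result, but its account of the proof in \cite{houerou2026pi} shows that Step~1 is false. You assert that pigeonhole costs only a constant in the exponent for largeness$(\theta)$, as it does in \cite{kolo2020some}. The paper records instead that an exp-sparse $\bbomega^{2n}$-large$(\theta)$ set is $\RT^1$-$\bbomega^n$-large$(\theta)$, and that this bound is tight: for every $n$ there are $F$ and $\theta_F$ with $F$ $\bbomega^{2n-1}$-large$(\theta_F)$ but not $\RT^1_2$-$\bbomega^n$-large$(\theta_F)$.

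Your sketch breaks exactly where the paper says the adaptation of \cite[Lemma 2.2]{kolo2020some} breaks: $\theta$-apartness is not preserved under unions of pairwise $\theta$-apart sets. Concretely, if $A<B<C$ are pairwise $\theta$-apart, $A$ need not be $\theta$-apart from $B\cup C$. The witness $y<\min B$ supplied by the pair $(A,B)$ is only guaranteed to work for $z<\max B$. So your invariant (``unions of pairwise $\theta$-apart blocks'') does not survive the regrouping of monochromatic pieces that the classical counting argument performs.

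Your remark that sparsity absorbs the loss addresses the wrong loss. Sparsity supplies more blocks, but it cannot make a union of blocks $\theta$-apart from an earlier one. Step~2 also leans on Step~1; the paper notes that the $\bbomega^{4n+4}$ bound for $\ADS$ in \cite{houerou2026pi} relies on the Ketonen--Solovay theorem.

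This doubling of the pigeonhole bound is the idea missing from your Step~3, because it is the real source of the exponential. The paper says the rest of the Ko{\l}odziejczyk--Yokoyama combinatorics transfers mutatis mutandis, and its whole motivation for largeness${}^*(\theta)$ is to bring the pigeonhole bound back to $n+O(1)$. Since the $\EM$ induction invokes pigeonhole at every level, the factor-two loss per application compounds into a constant factor per level. This gives $\EM$-$\bbomega^m$-largeness$(\theta)$ from $\bbomega^{(16^6+1)^m}$-largeness$(\theta)$, and composing with $\ADS$ at $m=4n+4$ gives the stated exponent.

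Your Step~3 instead posits an unspecified obstruction to amalgamating branches and formulates no inductive statement for $\EM$. It picks the constant $16^6+1$ to match the target rather than deriving it. It also conflates the $4n+4$, which is the $\ADS$ overhead, with the number of levels of the $\EM$ induction. As it stands, the one concrete lemma you state is wrong and the quantitative heart of the argument is absent.
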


Thanks to the similarity of the notions of largeness and largeness$(\theta)$, most of the constructions of~\cite{kolo2020some} work mutatis mutandis, but the bound for $\RT^1$-$\bbomega^n$-largeness$(\theta)$ is $2n$ rather than $n+1$.

\begin{lemma}[Le Houérou, Levy Patey and Yokoyama~\cite{houerou2026pi}]
If $F \finsub \NN$ is $\bbomega^{2n}$-large$(\theta)$ and exp-sparse, then it is $\RT^1$-$\bbomega^n$-large$(\theta)$.
\end{lemma}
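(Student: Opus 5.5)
We argue by induction on $n$, proving the slightly stronger statement: for every $\ell\ge 1$, if $F$ is $\bbomega^{2n}\cdot\ell$-large$(\theta)$ and exp-sparse, then for every $k\le\min F$ and every partition $F=E_0\sqcup\dots\sqcup E_{k-1}$, some $E_i$ is $\bbomega^{n}$-large$(\theta)$. Over the induction we actually track a statement with $\ell$ copies: an $\bbomega^{2n}\cdot\ell$-large$(\theta)$ set splits into $\ell$ pairwise $\theta$-apart $\bbomega^{2n}$-large$(\theta)$ blocks, and we want some color to collect many of them.

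For $n=0$ the claim is trivial: $F\neq\emptyset$, so some $E_i$ is nonempty. For the inductive step, take $F$ that is $\bbomega^{2n+2}$-large$(\theta)$ and exp-sparse, and a $k$-partition with $k\le\min F$. Unfold the definition twice. First, $F\setminus\min F$ is $\bbomega^{2n+1}\cdot\min F$-large$(\theta)$, giving pairwise $\theta$-apart blocks $B_0<\dots<B_{m-1}$, with $m=\min F$, each $\bbomega^{2n+1}$-large$(\theta)$. Unfolding once more, each $B_j\setminus\min B_j$ splits into $\min B_j$ pairwise $\theta$-apart $\bbomega^{2n}$-large$(\theta)$ sub-blocks $C_{j,0}<\dots<C_{j,\min B_j-1}$. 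Apply the induction hypothesis to each sub-block $C_{j,s}$, which is exp-sparse as a subset of $F$. Since $k\le\min F\le\min C_{j,s}$, we obtain a color $c(j,s)<k$ such that $E_{c(j,s)}\cap C_{j,s}$ is $\bbomega^n$-large$(\theta)$.

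We want some color $i$ and, at the first level, a block $B_j$ whose sub-blocks $C_{j,s}$ with $c(j,s)=i$ number at least $\min(\text{their union})$. Then $E_i\cap B_j$ contains an $\bbomega^n\cdot p$-large$(\theta)$ set with $p\ge$ its minimum. Pairwise $\theta$-apartness is inherited by subsets, so this set is $\bbomega^{n+1}$-large$(\theta)$ after dropping its minimum. The first-level index $j$ is needed precisely because $\theta$-apartness forbids simply concatenating pieces. The pigeonhole inside $B_j$ is where exp-sparsity enters. We have $\min B_j$ sub-blocks and $k\le\min F<\min B_j$ colors, so some color gets at least $\min B_j/k$ sub-blocks. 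We then need this number to exceed the minimum of the first sub-block of that color, which may be as large as $\max C_{j,s}$ for an earlier $s$.

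This is the main obstacle. A single pigeonhole fails because the minimum of the collected sub-blocks is uncontrolled. The fix, which explains the factor $2$ in the exponent, is to use the extra level. Choose $j\ge 1$ so that $\min B_j>4^{\max B_{j-1}}$; exp-sparsity gives $\min B_j>4^{x}$ for all earlier $x$. Then work inside $B_j$ with an iterated pigeonhole: consider initial segments of sub-blocks and count how often each color appears. If a color $i$ first occurs at sub-block $s_i$, the number of later sub-blocks of color $i$ must be compared with $\min C_{j,s_i}$. Exp-sparsity within $B_j$ ensures that sub-block minima grow slowly relative to the count $\min B_j$ of sub-blocks only in the sense needed, since each $C_{j,s}$ is itself large. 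If this accounting fails, we fall back on the $\min F$ first-level blocks and argue across different $j$. Some block $B_j$ must then have its early sub-blocks, below any threshold, monopolized by few colors, contradicting $k\le\min F$.

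I expect the careful bookkeeping of which minimum bounds which count to be the delicate part. Exp-sparsity should be used exactly once per level to absorb the loss of a factor $k$.
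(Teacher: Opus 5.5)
\paragraph{Review.} There is a genuine gap. Your setup is right: the two unfoldings into first-level blocks $B_j$ and sub-blocks $C_{j,s}$, the induction hypothesis applied to each $C_{j,s}$, and $\theta$-apartness passing to subsets. You also locate the obstacle correctly. But the fix you propose cannot work, and what follows it is not an argument.

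\paragraph{Why the fix fails.} Every $C_{j,s}$ lies in $B_j\setminus\{\min B_j\}$. So any union of homogeneous pieces taken inside one $B_j$ has minimum $>\min B_j$. Your decomposition of $B_j$, however, has only $\min B_j$ sub-blocks. Hence the number of same-coloured sub-blocks inside a single $B_j$ is always smaller than the minimum of the set they form, whatever colour or first index $s_i$ you pick. No iterated pigeonhole confined to $B_j$ can repair this, and exp-sparsity makes minima grow fast, not slowly. The closing sentences (``fall back on the first-level blocks'', ``monopolized by few colors'') specify neither a construction nor a contradiction. Also, your ``stronger'' statement with $\ell$ copies is equivalent to the original and is never used.

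\paragraph{The missing idea.} The minimum of the homogeneous set should be a single point taken from an \emph{earlier} first-level block. Your inequality $\min B_j>4^{\max B_{j-1}}$ is the right one, but it holds for every $j\ge 1$; what must be chosen is a block whose colours already occur before it. Let $f$ be the colouring given by the partition.
\begin{enumerate}
\item If $B_0$ is $f$-homogeneous you are done, since it is $\bbomega^{2n+1}$-large$(\theta)$.
\item Otherwise there is $t$ with $1\le t<\min F$ and $f(B_t)\subseteq f(B_0\cup\dots\cup B_{t-1})$. If not, each block adds a new colour and $|f(F)|\ge 2+(\min F-1)>k$.
\item Inside $B_t$, the $\min B_t>4^{\max B_{t-1}}$ sub-blocks receive at most $k\le\max B_{t-1}$ colours. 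So some colour $i$ has $c(t,s)=i$ for at least $\max B_{t-1}$ indices $s$.
\item Since $i\in f(B_t)$, there is $w\in B_0\cup\dots\cup B_{t-1}$ with $f(w)=i$, and $w\le\max B_{t-1}$.
\item Then $\{w\}\cup\bigcup_{c(t,s)=i}(E_i\cap C_{t,s})$ is homogeneous and $\bbomega^{n+1}$-large$(\theta)$. It is $w$ followed by at least $w$ pairwise $\theta$-apart $\bbomega^n$-large$(\theta)$ sets.
\end{enumerate}
This is exactly the device of Case~2 in the proof of Proposition~\ref{prop:largeness-partition}. The first unfolding pays for finding the small witness $w$, and the second pays for the pigeonhole; that is where the factor $2$ in the exponent comes from.
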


Furthermore, the authors showed that this bound is tight, in the sense that for every $n \in \NN$, there is a set~$F$ and a formula $\theta_F$ depending on~$F$ such that $F$ is $\bbomega^{2n-1}$-large$(\theta_F)$ but not $\RT^1_2$-$\bbomega^n$-large$(\theta_F)$.

\subsection{$\alpha$-largeness${}^*(\theta)$}\label[section]{subsec:new-largeness}

We now define a variant of $\alpha$-largeness$(\theta)$, namely, $\alpha$-largeness${}^*(\theta)$, for which one can prove a polynomial bound for $\RT^2_2$-$\bbomega^n$-largeness${}^*(\theta)$, opening the door to a polynomial simulation of $\RCA_0 + \RT^2_2$ over $\RCA_0 + \BSig_2$ for $\forall \Sigma^0_3$-sentences. As mentioned before, most of the combinatorics of~\cite{kolo2020some} apply to largeness$(\theta)$, except the bound for the pigeonhole principle. The goal is therefore to define a variant of largeness$(\theta)$ for which the bound for $\RT^1$-$\bbomega^n$-largeness${}^*(\theta)$ is in $n +O(1)$. The issue in adapting the proof of \cite[Lemma 2.2]{kolo2020some} to largeness$(\theta)$ is that $\theta$-apartness is not preserved by considering union of some pairwise $\theta$-apart sets. On the other hand, defining $\bbomega^n \cdot k$-largeness$(\theta)$ as $k$ many $\bbomega^n$-large$(\theta)$ subsets of~$F$ such that for every~$s < k-1$, $F_0 \cup \cdots \cup F_s$ and $F_{s+1} \cup \dots \cup F_{k-1}$ are $\theta$-apart, does not seem to yield a notion of largeness provably in $\BSig_2$. We therefore adopt a hybrid approach, which allows to preserve $\theta$-apartness under a fixed amount of union operations.

\begin{definition}
Let $\L \subseteq [\NN]^{<\NN}$ be a notion of largeness.
\begin{itemize}
    \item[(a)] Given $k \in \NN$, we write $\L \cdot k$ for the collection of all finite sets $F$ containing $k$ pairwise $\theta$-apart $\L$-large subsets $X_0 < \dots < X_{k-1} \subseteq F$.
    \item[(b)] Given $\sigma \in \NN^{<\NN}$, we write $\L \cdot \sigma$ for the collection of finite sets defined inductively as $\L \cdot \epsilon = \L$ and $\L \cdot (k \cdot \tau) = (\L \cdot k) \cdot \tau$.
\end{itemize}
\end{definition}

One can think of $\L \cdot (k, \shortbrace{d}, k)$ as a $k$-regular tree of depth~$d$ whose leaves are pairwise disjoint $\L$-large sets, with some $\theta$-apartness property at every node of the tree. This intuition is formalized in \Cref{lem:product-is-tree}.

\begin{definition}
Given $n$, $L^\theta_n$ is defined inductively as follows.
$L^\theta_0$ is the collection of all non-empty sets.
Having defined $L^\theta_n$, $L^\theta_{n+1}$ is the collection of all $X$ such that 
$$X \setminus \{\min X\} \in L^\theta_n \cdot (\min X, \shortbrace{n+1}, \min X)$$
We say that $X$ is $\bbomega^n$-large${}^*(\theta)$ if $X \in L^\theta_n$ and $\bbomega^n \cdot \sigma$-large${}^*(\theta)$ if $X \in L^\theta_n \cdot \sigma$.
\end{definition}

Note that if one replaces $L^\theta_n \cdot (\min X, \shortbrace{n+1}, \min X)$ by $L^\theta_n \cdot \min X$ in the definition above, then one recovers the notion of largeness$(\theta)$. Thus, any $\bbomega^n$-large${}^*(\theta)$ set is $\bbomega^n$-large$(\theta)$, and a fortiori $\bbomega^n$-large. A reversal is studied in \Cref{sec:new-largeness-link-to-old}. We shall prove in \Cref{sec:new-largeness-rt1} that the bound for 
$\RT^1$-$\bbomega^n$-largeness${}^*(\theta)$ is $n+2$, which is sufficient for our purposes.
The following theorem is one of the main results of this article:

\begin{theorem}
There is a primitive recursive function~$g : \NN \to \NN$ such that if $F \finsub \NN$ is $\bbomega^{12n^2+12n+6}$-large${}^*(\theta)$ and $g$-sparse, then it is $\RT^2_2$-$\bbomega^n$-large${}^*(\theta)$.
\end{theorem}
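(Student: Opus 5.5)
We plan to follow the decomposition of $\RT^2_2$ into $\EM$ and $\ADS$, in the spirit of Ko{\l}odziejczyk and Yokoyama~\cite{kolo2020some}. The target is two separate polynomial bounds, to be composed at the end. For the composition we use that $\bbomega^m$-largeness${}^*(\theta)$ is closed under supersets and under passing to sparse subsets. Concretely, we aim for:
\begin{itemize}
\item[(i)] every $\bbomega^{m}$-large${}^*(\theta)$, $g_0$-sparse set is $\EM$-$\bbomega^{\ell}$-large${}^*(\theta)$, with $m$ roughly $\ell^2$ up to constants;
\item[(ii)] every $\bbomega^{m'}$-large${}^*(\theta)$ set is $\ADS$-$\bbomega^{n}$-large${}^*(\theta)$, with $m'$ linear in $n$.
\end{itemize}
Given a coloring $f$ on $F$, we first extract an $f$-transitive subset $E$ that is $\bbomega^{\ell}$-large${}^*(\theta)$. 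We then apply (ii) to $f\restriction E$ with $\ell = m'$. Composing a quadratic bound with a linear one, with the constants tracked, should produce the $12n^2+12n+6$.

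For (ii) we follow the classical argument for linear orders, via the pigeonhole bound of \Cref{sec:new-largeness-rt1}. That section shows that $\RT^1$-$\bbomega^n$-largeness${}^*(\theta)$ costs $n+2$. We argue by induction on $n$, splitting $E\setminus\{\min E\}$ according to whether elements lie above or below $\min E$ in the order. This uses the tree decomposition $L^\theta_n\cdot(\min E,\dots,\min E)$: at each node of the $k$-regular tree we pass to one colour class, and the $\theta$-apartness built into the nodes is preserved under the bounded number of unions performed (\Cref{lem:product-is-tree}).

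For (i), the Erd\H{o}s--Moser part, we intend to use a tree partition theorem. Recall how $\EM$ is built: greedily, each new element splits the remaining set by its colour relation with the chosen elements, which produces a binary tree of types. To keep largeness${}^*(\theta)$, we view an $\bbomega^m$-large${}^*(\theta)$ set as a tree whose leaves are $L^\theta$-large blocks, with $\theta$-apart nodes. The 2-colouring $f$ induces a colouring of strong subtrees of this tree. A finite Milliken tree theorem with primitive recursive bounds then yields a strong subtree of depth about $\ell$ on which the induced types are uniform. Its leaves form a transitive set, and that set still carries the $\bbomega^{\ell}$-large${}^*(\theta)$ tree structure. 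The primitive recursive Milliken bounds are absorbed by the $g$-sparsity hypothesis, which lets us freely assume $\min$ of each block exceeds any fixed primitive recursive function of the preceding elements. This is why $g$ is only claimed to exist, and is not given explicitly.

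The main obstacle is step (i). The difficulty is in formulating the tree partition theorem so that the Milliken-style reduction costs only a polynomial number of levels in the $\bbomega$-exponent rather than an exponential number, and in checking that the resulting strong subtrees inherit $\theta$-apartness at each node. This is precisely where the earlier largeness$(\theta)$ framework lost the polynomial bound. I would first try to prove that an $\bbomega^{(\ell+1)(c\ell+c')}$-large${}^*(\theta)$ set yields a depth-$\ell$ homogeneous strong subtree. The idea is to run a level-by-level fusion argument: each level consumes $O(\ell)$ exponents, one $\RT^1$ application per previously chosen node-type. A separate verification is needed that all unions taken stay within the fixed depth allowed by $L^\theta_n\cdot(\min X,\dots,\min X)$.
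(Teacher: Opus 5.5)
Your top-level decomposition ($\EM$ first, then $\ADS$ on the transitive subset) matches the paper's. The gap is that you have the costs the wrong way round, and the half you treat as cheap is one you cannot currently prove. Step (ii), a linear bound for $\ADS$ directly in largeness${}^*(\theta)$, is not established in the paper, and your sketch does not give it. Splitting $E\setminus\{\min E\}$ by $f(\min E,\cdot)$ and passing to a colour class at each node only makes the colour to $\min E$ constant. It says nothing about colours between different leaf blocks of the $(\min E,\dots,\min E)$-tree. Gluing homogeneous pieces from different blocks into one set that still has the required tree shape, with $\theta$-apart nodes, is the whole difficulty. Transitivity reduces this to colours between blocks, but what remains is again an $\ADS$-type problem on the leaves of a tree.

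The paper never proves a direct $\ADS$ bound for largeness${}^*(\theta)$. It imports the linear bound $\bbomega^{4n}\cdot 2$ for largeness$(\theta)$ (\Cref{prop:ads-old-largeness}) and uses that every large${}^*(\theta)$ set is large$(\theta)$. It then converts back via \Cref{prop:large-to-new-large}: every $\bbomega^{n(n+1)/2}$-large$(\theta)$ set is $\bbomega^n$-large${}^*(\theta)$. This translation is the source of the quadratic, giving $\bbomega^{2n^2+2n}\cdot 2$ (\Cref{cor:ads-new-largeness}).

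Conversely, $\EM$ is the linear part, with bound $\bbomega^{6k}$ (\Cref{thm:em-largeness-star-theta-bound}). The proof is an induction on $k$ that spends a constant number of exponent levels per step:
\begin{enumerate}
\item take $x_0=\max X_0$ and use $\RT^1$ to make $f(x_0,\cdot)$ constant;
\item use sparsity to raise the branching to $B_{\ref{prop:partition-theorem-coloring-products}}(x_0,k)$;
\item apply \Cref{lem:new-stabilize-bounds} so that colours between distinct blocks depend only on block indices;
\item apply \Cref{prop:partition-theorem-coloring-products} to the $6k$-deep tree to get an $x_0$-branching subtree of depth $k$ whose leaf pairs are homogeneous;
\item run the induction hypothesis inside each leaf block.
\end{enumerate}
In step 3, all constraints for a block are bundled into a single $\RT^1$ colouring, whose exponentially many colours are absorbed by exp-sparsity.

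The Milliken step costs no exponent levels at all. It only needs large branching, absorbed by $g$-sparsity, and depth $2k-1$. So your worry about its cost in the exponent is misplaced.

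Your plan for (i) is not sound either. One $\RT^1$ application per previously chosen node type costs two exponent levels each if done sequentially. The number of node types grows with the tree, so it is unclear even that this stays polynomial. Also, the leaves of a strong subtree are blocks, not points, so ``its leaves form a transitive set'' still needs the inner induction and the new root $x_0$.

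Finally, the constant is $12n^2+12n+6=6(2n^2+2n+1)$. It comes from linear $\EM$ producing a transitive $\bbomega^{2n^2+2n+1}$-large${}^*(\theta)$ (hence $\bbomega^{2n^2+2n}\cdot 2$-large${}^*(\theta)$) set, followed by quadratic $\ADS$. Your reversed allocation gives no reason to land on this number.
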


Using the translations between largeness$(\theta)$ and largeness${}^*(\theta)$ studied in \Cref{sec:new-largeness-link-to-old}, this bounds translates into a polynomial bound for $\RT^2_2$-largeness$(\theta)$.

\begin{figure}[htbp]
\begin{center}
\def\arraystretch{1.5}%
\begin{tabular}{|c|c|c|c|c|}
\hline
$\Psf$ & $\Psf$-$\bbomega^n$-large & $\Psf$-$\bbomega^n$-large$(\theta)$ & $\Psf$-$\bbomega^n$-large${}^*(\theta)$\\
\hline
$\RT^1_2$ & $\bbomega^n \cdot 2$ \cite{kolo2020some} & $\bbomega^{2n}$ \cite{houerou2026pi} &  $\bbomega^{n+1} \cdot 2$ (\ref{cor:largeness-partition-simple}) \\ \hline
$\RT^1$ & $\bbomega^{n+1}$ \cite{kolo2020some} & $\bbomega^{2n}$ \cite{houerou2026pi} & $\bbomega^{n+2}$ (\ref{cor:largeness-partition-verysimple}) \\ \hline
$\EM$ & $\bbomega^{36n}$ \cite{kolo2020some} \hspace{10pt} $\bbomega^{n} \cdot 2$ \cite{houerou2026partition} & $\bbomega^{(16^6+1)^n}$ \cite{houerou2026pi} & $\bbomega^{6n}$ (\ref{thm:em-largeness-star-theta-bound}) \\ \hline
$\ADS$ & $\bbomega^{4n+4}$ \cite{kolo2020some} \hspace{10pt} $\bbomega^{2n} \cdot 2$ \cite{houerou2026partition} & $\bbomega^{4n+4}$ \cite{houerou2026pi} & $\bbomega^{2n^2+2n} \cdot 2$ (\ref{cor:ads-new-largeness}) \\ \hline
$\RT^2_2$ & $\bbomega^{144(n+1)}$ \cite{kolo2020some} \hspace{10pt} $\bbomega^{2n} \cdot 4$ \cite{houerou2026partition} & $\bbomega^{(16^6+1)^{4n+4}}$ \cite{houerou2026pi} & $\bbomega^{12n^2+12n+6}$ (\ref{thm:rt22-largeness-star-theta-bound}) \\
\hline
\end{tabular}
\caption{Upper bounds of $\Psf$-$\bbomega^n$-largeness for various notions of largeness and various $\RT$-like statements, under the assumptions of $g$-sparsity for some primitive recursive function~$g$ and of sufficiently large minimum.}
\end{center}
\end{figure}

\section{Combinatorics of largeness${}^*(\theta)$}\label[section]{sec:combinatorics}

In this section, we develop the framework of $\alpha$-largeness${}^*(\theta)$. It admits many similarities with the development of Le Houérou, Levy Patey and Yokoyama~\cite{houerou2026pi}, with better upper bounds. However, the proofs and notation a significantly more complicated due to the structural differences of the two notions of largeness.

We start with a technical lemma which formalizes the intuition that $\L \cdot (k, \shortbrace{d} k)$ admits a tree structure, with some $\theta$-apartness conditions at each node. More precisely, every set in $\L \cdot (k, \shortbrace{d} k)$ can be represented as a $k$-regular tree of depth~$d$ whose leaves are pairwise disjoint $\L$-large sets, with some $\theta$-apartness property at every node of the tree.

\begin{lemma}\label[lemma]{lem:product-is-tree}
Let $\L$ be a notion of largeness. A set $X$ is in $\L \cdot (k, \shortbrace{d} k)$ iff
there is a map $g : k^{\leq d} \to \L \cap [X]^{<\NN}$ such that
\begin{itemize}
    \item[(a)] for every~$\sigma \in k^{<d}$, $g(\sigma) = \bigcup_{i < k} g(\sigma \cdot i)$
    \item[(b)] for every~$\ell \leq d$ and $\sigma, \tau \in k^\ell$ with $\sigma <_{lex} \tau$, then $g(\sigma) < g(\tau)$ and $g(\sigma)$ and $g(\tau)$ are $\theta$-apart
\end{itemize}
\end{lemma}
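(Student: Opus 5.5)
The proof is by induction on $d$, after a preliminary reindexing step. The definition $\L \cdot (k \cdot \tau) = (\L \cdot k)\cdot \tau$ peels off the \emph{innermost} multiplication. The tree description is more naturally built from the \emph{outermost} one: split $X$ into $k$ pieces first, then recurse into each piece. So the first step is the identity
$$\L \cdot (\sigma \cdot m) = (\L \cdot \sigma)\cdot m$$
for every notion of largeness $\L$, every $\sigma \in \NN^{<\NN}$ and every $m$. I would prove it by induction on $|\sigma|$, simultaneously for all $\L$. The case $\sigma = \epsilon$ is the definition. For $\sigma = j \cdot \tau$ we compute
$$\L\cdot(j\cdot\tau\cdot m) = (\L\cdot j)\cdot(\tau\cdot m) = ((\L\cdot j)\cdot\tau)\cdot m = (\L\cdot(j\cdot\tau))\cdot m,$$
where the middle equality is the induction hypothesis applied to the notion $\L \cdot j$. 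One should check that $\L\cdot j$ is again closed under supersets, which is immediate; the operation is purely syntactic anyway. Specializing, $\L\cdot(k,\shortbrace{d},k) = (\L\cdot(k,\shortbrace{d-1},k))\cdot k$.

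The case $d = 0$ is immediate: $g$ is defined only on the empty string, and $g(\epsilon)\in\L$ with $g(\epsilon)\subseteq X$ holds iff $X\in\L$, by closure under supersets. For the inductive step I would assume $k \geq 1$. When $k=0$ and $d \geq 1$, $\L\cdot 0$ contains every set, so the statement must be read with $g(\epsilon)$ allowed to be empty or excluded; I would note this degenerate case separately.

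\emph{Forward direction.} Let $X \in (\L\cdot k^{d-1})\cdot k$, witnessed by pairwise $\theta$-apart sets $X_0<\dots<X_{k-1}\subseteq X$, each in $\L\cdot k^{d-1}$. By induction we obtain trees $g_i : k^{\leq d-1}\to \L\cap[X_i]^{<\NN}$. Define $g(i\cdot\sigma) = g_i(\sigma)$ and $g(\epsilon) = \bigcup_{i<k} g(i)$. Then $g(\epsilon)$ is $\L$-large as a superset of the $\L$-large set $g(0)$, and it is a subset of $X$. Condition (a) holds at $\epsilon$ by construction and elsewhere by the induction hypothesis. For (b), take $\sigma<_{lex}\tau$ of the same length $\ell\geq 1$. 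If they share their first letter, (b) follows from the induction hypothesis for that $g_i$. Otherwise their first letters are $i<j$, so $g(\sigma)\subseteq X_i$ and $g(\tau)\subseteq X_j$. Hence $g(\sigma)<g(\tau)$, and they are $\theta$-apart because $\theta$-apartness is inherited by subsets (noted after the definition of $\theta$-apartness).

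\emph{Backward direction.} Given $g$, set $X_i = g(i)$ for $i<k$. By (b) at level $\ell=1$, these sets are ordered and pairwise $\theta$-apart, and each is a subset of $X$. The restricted maps $g_i(\sigma) = g(i\cdot\sigma)$ on $k^{\leq d-1}$ satisfy (a) and (b) and take values in $\L\cap[X_i]^{<\NN}$. For the latter: by (a), $g(i\cdot\sigma)\subseteq g(i)$, by an easy induction on $|\sigma|$. The induction hypothesis therefore gives $X_i\in\L\cdot k^{d-1}$, and so $X\in(\L\cdot k^{d-1})\cdot k = \L\cdot k^{d}$.

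The only genuinely non-mechanical point is the reindexing identity, since the definition nests in the opposite direction from the tree recursion. Once it is in place, the rest is bookkeeping with the hereditary property of $\theta$-apartness and closure of $\L$ under supersets.
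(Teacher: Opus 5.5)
Your proof is correct and follows the paper's argument: induction on $d$, gluing the $k$ subtrees under a new root for the forward direction, and restricting $g$ to the subtrees below each $g(i)$ for the converse. Your reindexing identity $\L\cdot(\sigma\cdot m) = (\L\cdot\sigma)\cdot m$ and your remark on the case $k=0$ (where $g(\epsilon)=\emptyset$ would have to be $\L$-large) make explicit two points that the paper uses or assumes without comment.
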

\begin{proof}
By induction on $d$. \\

For $d = 0$, let $X$ be in $\L \cdot (k, \shortbrace{0}, k) = \L$, and let $g : k^{\leq 0} \to [X]^{<\NN}$ defined by $g(\epsilon) = X$, $g$ vacuously satisfy $(a)$ and $(b)$. Reciprocally, given a function $g : k^{\leq 0} \to [X]^{<\NN}$ satisfying $(a)$ and $(b)$, $g(\epsilon)$ belongs to $\L \cap [X]^{<\NN}$, hence $X$ is in $\L = \L \cdot (k, \shortbrace{0}, k)$. \\

Assume the property to be true for some $d \in \NN$. 

Let $X$ be in $\L \cdot (k, \shortbrace{d+1}, k)$, there exists pairwise $\theta$-apart subsets $X_0 < \dots < X_{k-1} \subseteq X$ such that $X_i \in \L \cdot (k, \shortbrace{d}, k)$ for every $i < k$. By the inductive hypothesis applied to every $X_i$, there exists maps $g_i : k^{\leq d} \to \L \cap [X_i]^{<\NN}$ satisfying $(a)$ and $(b)$ for every $i < k$. Let $g : k^{\leq (d+1)} \to \L \cap [X]^{<\NN}$ defined by $g(\epsilon) = g(0) \cup \dots \cup g(k-1)$ and $g(i \cdot \sigma) = g_i(\sigma)$ for every $\sigma \in k^{\leq d}$ and $i < k$. It is clear that $g$ satisfies $(a)$ and $(b)$.

Reciprocally, consider such a function $g : k^{\leq (d+1)} \to \L \cap [X]^{<\NN}$ satisfying $(a)$ and $(b)$. For $i < k$, the map $g_i : k^{\leq d} \to \L \cap [g(i)]^{<\NN}$ defined by $g_i(\sigma) = g(i \cdot \sigma)$ also satisfy $(a)$ and $(b)$, thus, by the inductive hypothesis, the sets $g(0), \dots, g(k-1)$ are in $\L \cdot (k, \shortbrace{d},k)$, and by $(b)$, they satisfy $g(0) < \dots < g(k-1)$ and are pairwise $\theta$-apart. Therefore, the set $g(0) \cup \dots \cup g(k-1)$ is in $\L \cdot (k, \shortbrace{d+1},k)$ and, as $g(0) \cup \dots \cup g(k-1) \subseteq X$, $X$ is also in $\L \cdot (k, \shortbrace{d+1},k)$.
\end{proof}

\subsection{Construction and deconstruction}

We now prove the standard construction and deconstruction propositions for this new notion of largeness. They intuitively state sufficient conditions for a large union of $\bbomega^{b}$-large sets to be $\bbomega^{a+b}$-large, and conversely, the amount of largeness necessary to obtain an $\bbomega^a$-large collection of $\bbomega^b$-large sets.

There are multiple possible interpretations of \qt{large collection} of sets. 
In the weakest sense, $\{ \max X_i : i < k \}$ is large, and in the strongest sense, any set $H \in \prod_{i < k} [\min X_i, \max X_i]$ is large. We shall prove the construction proposition (\Cref{prop:new-large-set-of-large-sets-is-large}) with the weakest hypothesis, and the deconstruction proposition (\Cref{prop:split-new-largeness}) with the strongest conclusion.
Note that in the case of the notion of largeness of Ketonen and Solovay, both interpretations are equivalent, by regularity: indeed, if $E = \{ x_0 < \dots < x_{k-1} \}$ is $\alpha$-large, and $F = \{ y_0 < \dots < y_{k-1} \}$ is such that $y_i \leq x_i$ for every~$i < k$, then $F$ is also $\alpha$-large.

\begin{proposition}[Construction, $\ISig_1$]\label[proposition]{prop:new-large-set-of-large-sets-is-large}
    Let $a,b \in \NN$.  Let $X_0 < X_1 < \dots < X_{k-1}$ be pairwise $\theta$-apart $\bbomega^{a}$-large${}^*(\theta)$ sets such that $\{\max X_s : s < k \}$ is $\bbomega^{\lfloor \frac{a}{2}\rfloor + 2b}$-large${}^*(\theta)$. 
    Then $\bigcup_{s < k} X_s$ is $\bbomega^{a + b}$-large${}^*(\theta)$.
\end{proposition}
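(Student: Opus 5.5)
The plan is to argue by induction on $b$, for all $a$ simultaneously, using \Cref{lem:product-is-tree} to pass between the recursive definition of $L^\theta_n \cdot (k,\dots,k)$ and explicit $k$-regular trees. Write $Z = \{\max X_s : s < k\}$, $U = \bigcup_{s<k} X_s$ and $x = \min U = \min X_0$.

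\textbf{Base case $b=0$.} Since $Z$ is non-empty, $U \supseteq X_0$. The set $X_0$ is $\bbomega^a$-large${}^*(\theta)$, and largeness notions are closed under supersets. Hence $U$ is $\bbomega^{a}$-large${}^*(\theta)$.

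\textbf{Inductive step $b \to b+1$.} Put $m = \lfloor a/2\rfloor + 2b + 2$ and $z = \min Z = \max X_0 \geq x$. By definition of $L^\theta_m$, the set $Z \setminus \{z\}$ lies in $L^\theta_{m-1}\cdot(z,\dots,z)$ with depth $m$. By \Cref{lem:product-is-tree}, this gives a $z$-regular tree $g$ of depth $m$ whose nodes are pairwise $\theta$-apart at each level and whose leaves are $\bbomega^{m-1}$-large${}^*(\theta)$ sets of maxima.

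We must show that $U\setminus\{x\}$ lies in $L^\theta_{a+b}\cdot(x,\dots,x)$ with depth $a+b+1$. Since $x \leq z$, it suffices to prune $g$ to an $x$-regular tree. The depth $m$ may be smaller than $a+b+1$ when $a$ is large; this is where the factor $\lfloor a/2\rfloor$ and the coefficient $2b$ come from. Each leaf is $\bbomega^{m-1}$-large${}^*(\theta)$, with $m-1 = \lfloor a/2\rfloor + 2b+1$. Unfolding a leaf once via the definition of $L^\theta_{m-1}$ (and \Cref{lem:product-is-tree}) produces a further tree of depth $m-1$, with branching $\geq x$. Its leaves are $\bbomega^{\lfloor a/2\rfloor+2b}$-large${}^*(\theta)$, which is exactly the hypothesis needed to apply the induction hypothesis at $b$. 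Concatenating the two trees gives total depth $2m-1 \geq a+b+1$, which is enough.

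At each new leaf $W \subseteq Z$, the induction hypothesis applied to $\{X_s : \max X_s \in W\}$ shows that their union is $\bbomega^{a+b}$-large${}^*(\theta)$. Pulling the whole tree back from maxima to the corresponding unions of $X_s$ then yields the required tree for $U \setminus \{x\}$. Every step here uses only finite combinatorics with bounded quantifiers, so the argument should go through in $\ISig_1$, with the induction on $b$ being $\Sigma_1$ (indeed $\Delta_0$ in the code of the data).

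\textbf{Main obstacle: $\theta$-apartness after pulling back.} Suppose $A < B$ are $\theta$-apart sets of maxima, and let $U_A, U_B$ be the corresponding unions of $X_s$. We need, for every $y' < \max U_A = \max A$, a witness below $\min U_B$ that works for every $z' < \max U_B = \max B$. Apartness of $A,B$ provides a witness below $\min B$, which is $\max$ of the first $X_s$ in $B$ and so is too large. Apartness of the individual $X_s$ gives a witness below $\min U_B$, but only good up to the maximum of one block. I would first try to fix this by discarding, in each subtree, its first (or boundary) block $X_s$. Then $\min U_B$ exceeds the maximum of some element of $Z$ lying between $A$ and $B$, and a pair of suitably chosen $X_s$ that are $\theta$-apart supplies the witness via the transitivity and heredity properties of apartness.

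The cost of discarding one element per node should be absorbed by the slack between $m-1$ and $\lfloor a/2\rfloor+2b$, or by the spare depth in $2m-1 > a+b+1$. If that slack turns out not to suffice, the alternative is to strengthen the induction hypothesis so that it also carries an apartness conclusion about the produced tree.
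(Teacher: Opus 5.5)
Your outline is the paper's proof. You induct on $b$ and unfold the largeness${}^*(\theta)$ of $\{\max X_s : s<k\}$ twice, which gives a tree of depth $2m-1=2\lfloor a/2\rfloor+4b+3\geq a+b+1$ whose leaves are $\bbomega^{\lfloor a/2\rfloor+2b}$-large${}^*(\theta)$. You then apply the induction hypothesis at each leaf and pull back to unions of blocks. The paper stops at this point and asserts the conclusion. It never checks that the pulled-back unions on a common level are $\theta$-apart, i.e.\ condition (b) of \Cref{lem:product-is-tree}.

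The obstacle you isolate is genuine. Take $\theta(x,y,z)$ of the form $z<h(y)$ with $h$ a nondecreasing step function. Then $\theta$-apartness of $E<F$ just says $h(\min F-1)\geq\max F$. One can choose $h$ so that this holds for every block, every node inside the blocks and every node of the maxima tree, yet fails for some pulled-back union $\bigcup\{X_s:\max X_s\in B\}$. So on this point you are more careful than the published argument, and a fix is needed.

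Your first fix (discard the boundary block) is the right one. Here is a precise version, which also corrects two details of your sketch.

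\textbf{Discard only at the leaves.} For a leaf $L$ of the maxima tree put $U'_L=\bigcup\{X_s:\max X_s\in L\setminus\{\min L\}\}$. For any node $A$, let $U'_A$ be the union of the $U'_L$ over the leaves $L$ below $A$; this keeps the union condition (a) intact.

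\textbf{Where the witness comes from.} For $A<_{lex}B$ on one level you get $\max U'_A=\max A$, $\min U'_B>\min B$ and $\max U'_B=\max B$. So the required witness comes directly from the $\theta$-apartness of $A$ and $B$ as nodes of the maxima tree. It does not come from a pair of blocks $X_s$: no single pair yields some $y<\min U'_B$ valid for all $z<\max B$.

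\textbf{Where the budget comes from.} There is no slack in the leaf exponent, since after the second unfolding the leaves are exactly $\bbomega^{\lfloor a/2\rfloor+2b}$-large${}^*(\theta)$. Spend depth instead: cut the combined tree at level $2m-2=2\lfloor a/2\rfloor+4b+2\geq a+b+1$. Every node $L$ on that level contains at least two $\theta$-apart $\bbomega^{\lfloor a/2\rfloor+2b}$-large${}^*(\theta)$ children. Hence $L\setminus\{\min L\}$ is still $\bbomega^{\lfloor a/2\rfloor+2b}$-large${}^*(\theta)$, and the induction hypothesis makes $U'_L$ $\bbomega^{a+b}$-large${}^*(\theta)$. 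Finally, prune to branching $\min X_0$ and depth $a+b+1$ using closure under supersets and heredity of apartness. This completes the step without strengthening the induction hypothesis.
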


\begin{proof}
By induction on $b$. The case $b = 0$ is trivial.

Assume the property to be true for some $b \in \NN$. And let $X_0 < X_1 < \dots < X_{k-1}$ be pairwise $\theta$-apart $\bbomega^{a}$-large${}^*(\theta)$ sets such that $\{\max X_s : s < k \}$ is $\bbomega^{\lfloor \frac{a}{2}\rfloor + 2(b+1)}$-large${}^*(\theta)$.
Unfolding twice the definition, $\{\max X_s : 1 \leq s < k \}$ is $$\bbomega^{\lfloor\frac{a}{2}\rfloor + 2b}\cdot (\max X_0, \shortbrace{\lfloor \frac{a}{2}\rfloor + 2b + 2 + \lfloor \frac{a}{2}\rfloor + 2b + 1}, \max X_0)\mbox{-large}^*(\theta)$$

Notice that $\lfloor \frac{a}{2}\rfloor + 2b + 2 + \lfloor \frac{a}{2}\rfloor  + 2b + 1 > a+b$.

Let $Z$ be any $\bbomega^{\lfloor \frac{a}{2}\rfloor + 2b}$-large${}^*(\theta)$ set corresponding to a leaf of the tree induced by \Cref{lem:product-is-tree}. By induction hypothesis, the set $W = \{ X_s : \max X_s \in Z \}$ is $\bbomega^{a+b}$-large${}^*(\theta)$. Then $\bigcup_{s < k} X_s$ is $\bbomega^{a+b+1}$-large${}^*(\theta)$.
\end{proof}

In the following proposition, we abuse the product notation, and see $H$ as a subset of $\NN$ intersecting every interval $[\min X_i, \max X_i]$ exactly once rather than a tuple in the product $[\min X_0, \max X_0] \times \dots \times [\min X_{k-1}, \max X_{k-1}]$. Note that in the following deconstruction lemma, the bound of largeness${}^*(\theta)$ is similar to Ketonen and Solovay's largeness, namely, $\bbomega^{n+m} \cdot 2$, while for largeness$(\theta)$, one required $\bbomega^{n+2m} \cdot 2$ (see \cite[Lemma 2.5]{houerou2023conservation}).

\begin{proposition}[Deconstruction, $\ISig_1$]\label[proposition]{prop:split-new-largeness}
For every $n,m$ and for every~$\bbomega^{n+m}\cdot 2$-large${}^*(\theta)$ set~$X$, there are some $k \in \NN$ and some $\bbomega^n$-large${}^*(\theta)$ pairwise $\theta$-apart subsets $X_0 < \dots < X_{k-1}$ of~$X$ such that every $H \in \prod_{i < k} [\min X_i, \max X_i]$ is $\bbomega^m$-large${}^*(\theta)$.
\end{proposition}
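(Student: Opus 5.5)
We argue by induction on $m$, keeping $n$ fixed and proving the statement for all $X$ simultaneously. The statement is $\Pi^0_1$-like in $X$ once $n,m$ are fixed; it is $\Sigma_1$ in the existence of the decomposition, which is bounded by $X$, so it is in fact $\Delta_0$ given $X$. Hence $\ISig_1$ suffices for the induction. We use \Cref{lem:product-is-tree} throughout to view sets of the form $L^\theta_j \cdot (k,\dots,k)$ as trees with $\theta$-apart nodes. Note that $\theta$-apartness is inherited by subsets, and that $\min$ and $\max$ of the leaves behave monotonically along the lexicographic order.

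\textbf{Base case $m=0$.} Let $X$ be $\bbomega^n\cdot 2$-large${}^*(\theta)$. Take $X_0 < X_1$ witnessing this, or even just $k=1$ with $X_0$ one of them. Every $H$ meeting $[\min X_0,\max X_0]$ is non-empty, hence $\bbomega^0$-large${}^*(\theta)$.

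\textbf{Inductive step $m \to m+1$.} Let $X$ be $\bbomega^{n+m+1}\cdot 2$-large${}^*(\theta)$, with witnesses $Y_0<Y_1$ that are $\theta$-apart and each $\bbomega^{n+m+1}$-large${}^*(\theta)$. We only use $Y_0$ for the "head" and $Y_1$ for the "body".

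Unfolding the definition on $Y_1$: with $c=\min Y_1$, the set $Y_1\setminus\{c\}$ lies in $L^\theta_{n+m}\cdot(c,\dots,c)$ with $n+m+1$ levels. By \Cref{lem:product-is-tree} this gives a $c$-regular tree whose leaves $Z_0<\dots<Z_{N-1}$ are pairwise $\theta$-apart and each $\bbomega^{n+m}$-large${}^*(\theta)$, with $N=c^{n+m+1}$. To apply the induction hypothesis to each leaf we need $\bbomega^{n+m}\cdot 2$-largeness, so we instead group consecutive leaves in pairs. This is possible since sibling pairs at the bottom level are $\theta$-apart. We therefore view $Y_1\setminus\{c\}$ as $L^\theta_{n+m}\cdot 2$ arranged in a tree of depth $n+m+1$ whose bottom level has arity $c/2$. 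Here $\min Y_1\ge 3$ is harmless: we may use $\lceil c/2\rceil$ pairs, or simply use $Y_0$ to supply the extra point.

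Applying the induction hypothesis to each pair yields decompositions $X^{j}_0<\dots<X^{j}_{k_j-1}$ into $\bbomega^n$-large${}^*(\theta)$ pieces such that every transversal of the $j$-th family is $\bbomega^m$-large${}^*(\theta)$. Concatenating all families over $j$, and prepending a piece from $Y_0$ containing an element $\le c$, gives the desired $X_0<\dots<X_{k-1}$. Pairwise $\theta$-apartness across different $j$ follows from apartness of the leaves and inheritance to subsets.

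Now let $H$ be a transversal. Its first element $h_0$ lies in the $Y_0$-piece, so $h_0\le \max Y_0 < c$. Then $H\setminus\{h_0\}$ restricted to the $j$-th family is $\bbomega^m$-large${}^*(\theta)$. These restrictions inherit the tree structure of the leaves: they sit in a $(\ge h_0)$-regular tree of depth $n+m+1\ge m+1$ with $\theta$-apart nodes, because the node sets of $H$ are subsets of the node sets of the tree. Pruning the tree to arity $h_0$ and depth $m+1$ then gives $H\setminus\{\min H\}\in L^\theta_m\cdot(h_0,\dots,h_0)$ with $m+1$ levels, i.e.\ $H$ is $\bbomega^{m+1}$-large${}^*(\theta)$. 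Pruning is legitimate since largeness notions are closed under supersets and removing whole subtrees preserves (a),(b) of \Cref{lem:product-is-tree} after re-indexing.

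\textbf{Main obstacle.} The delicate point is arity and depth bookkeeping. We must ensure that the element $h_0=\min H$ is at most the arity available in the tree, which is why we use $Y_0$ and the factor $2$. We must also ensure that pairing the bottom leaves into $\bbomega^{n+m}\cdot 2$-large sets costs at most a factor of $2$ in arity, which is absorbed since we only need arity $h_0<c$. If the $\lceil c/2\rceil\ge h_0$ count fails, I would instead pair at the level above, using one extra tree level (depth $n+m+1$ versus the required $m+1$ leaves plenty of spare levels when $n\ge 1$). The case $n=0$ would be handled separately, using the leaf $\{\cdot\}$ structure directly.
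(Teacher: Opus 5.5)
Your main construction has a genuine gap in the $\theta$-apartness bookkeeping. Grouping all $c$ children of a bottom-level node into $\lfloor c/2\rfloor$ pairs $Z_{2j}\cup Z_{2j+1}$ does not give a tree in the sense of \Cref{lem:product-is-tree}. That would require distinct pairs under a common parent to be $\theta$-apart, and $\theta$-apartness is not preserved under unions; this is exactly the obstruction discussed in Section~\ref{subsec:new-largeness}. For $Z_{2j}\cup Z_{2j+1}$ versus $Z_{2j+2}\cup Z_{2j+3}$ you need, for each $x<\max Z_{2j+1}$, a single $y<\min Z_{2j+2}$ that works for all $z<\max Z_{2j+3}$. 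Apartness of $Z_{2j+1},Z_{2j+2}$ only handles $z<\max Z_{2j+2}$, and apartness of $Z_{2j+1},Z_{2j+3}$ only yields $y<\min Z_{2j+3}$.

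For the same reason, the claim that ``apartness across different $j$ follows from apartness of the leaves and inheritance to subsets'' is unjustified. The induction hypothesis only gives pieces that are subsets of $Z_{2j}\cup Z_{2j+1}$, and these may straddle both leaves. Likewise, the part $H^j$ of a transversal over the $j$-th family in general spans both leaves. So the $H^j$ under a common parent cannot form the bottom level of a tree witnessing $H\setminus\{h_0\}\in L^\theta_m\cdot(h_0,\dots,h_0)$. Separately, ``absorbed since $h_0<c$'' is the wrong inequality. The bottom arity is $\lfloor c/2\rfloor$, whereas $h_0$ can be as large as the maximum of the head piece, e.g.\ $\max Y_0$, which may equal $c-1$.

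The repair is the fallback you mention only as a contingency. It is exactly the paper's proof, so make it the main argument. For $n\ge 1$, the tree of $Y_1\setminus\{c\}$ has depth $n+m+1\ge m+2$. Prune it to depth $m+2$, and for each node $\tau$ at depth $m+1$ keep only its first two children. Then $Z_{\tau\cdot 0}\cup Z_{\tau\cdot 1}$ is $\bbomega^{n+m}\cdot 2$-large${}^*(\theta)$ and the induction hypothesis applies. Take $Y_0$ together with all resulting pieces.

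Sets attached to distinct $\tau$ now lie in distinct same-level nodes $g(\tau)$, which are $\theta$-apart by \Cref{lem:product-is-tree}(b). Apartness of $E<F$ depends only on $\max E$, $\min F$ and $\max F$. So it transfers both to the pieces and to the parts of any transversal, which lie in $[\min g(\tau),\max g(\tau)]$ though not necessarily in $X$. Meanwhile the $m+1$ upper levels keep arity $c>\max Y_0\ge h_0$. The cost is one tree level, which is why $n\ge 1$ is required.

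Your $n=0$ case (``use the leaf structure directly'') is not yet an argument. The paper takes all singletons $\{x_i\}$ of $X$, so that every transversal equals $X$. Note, however, that the pieces must also be pairwise $\theta$-apart, which is not automatic for arbitrary points of $X$. The paper's one-line treatment does not check this either, so this case needs its own care.
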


\begin{proof}
The proof is similar \cite[Lemma 2.5]{houerou2023conservation}, but adapted for this new notion of largeness.

If $n = 0$, then, for $X = \{x_0, \dots, x_{k-1}\}$ an $(\bbomega^{m}\cdot 2)$-large${}^*(\theta)$ set, we can pick $X_i = \{x_i\}$ for every $i < k$. Every $X_i$ is $\bbomega^0$-large${}^*(\theta)$ and every $H \in \Pi_{i < k} [\min X_i, \max X_i]$ is equal to $X$ and is therefore $\bbomega^{m}$-large${}^*(\theta)$.

Thus, we can assume that $n > 0$, and prove the result by induction on $m$.

Case $m = 0$. The result is clear, as every non-empty set with an element greater than $3$ is $\bbomega^0$-large${}^*(\theta)$

Case $m > 0$. Let $X = Y_0 \sqcup Y_1$ for $Y_0 < Y_1$ two $\bbomega^{n+m}$-large${}^*(\theta)$ and $\theta$-apart sets. $Y_1$ is $\bbomega^{n+m-1}\cdot (\max Y_0, \shortbrace{m+n}, \max Y_0)$-large${}^*(\theta)$, and, as $n > 0$, it is therefore $\bbomega^{n+m-1}\cdot (\max Y_0, \shortbrace{m+1}, \max Y_0)$-large${}^*(\theta)$. Let $(Z_{\sigma})_{\sigma \in (\max Y_0)^{m+1}}$ be the $\bbomega^{n+m-1}$-large${}^*(\theta)$ subsets of $Y_1$ corresponding to the leaves of the tree induced by \Cref{lem:product-is-tree}. We will only use the sets $Z_{\tau \cdot i}$ for $\tau \in (\max Y_0)^{m}$ and $i < 2$.

For $\tau \in (\max Y_0)^{m}$, we can apply the inductive hypothesis on the set $Z_{\tau \cdot 0} \cup Z_{\tau \cdot 1}$ and obtain some sequence $X_0^{\tau} < \dots < X_{k_{\tau} - 1}^{\tau}$ of $\theta$-apart, $\bbomega^n$-large${}^*(\theta)$ subsets of $Z_{\tau \cdot 0} \cup Z_{\tau \cdot 1}$ such that every $H^{\tau} \in \Pi_{j < k_{\tau}} [\min X_j^{\tau}, \max X_j^{\tau}]$ is $\bbomega^{m-1}$-large${}^*(\theta)$.

Consider the family comprised of $Y_0$ and all the $X_j^{\tau}$ for $\tau \in (\max Y_0)^{m}$ and $j < k_{\tau}$. Let $k$ be the cardinality of the family and write $W_i$ for its $i$-th element. 

Every $W_i$ is $\bbomega^n$-large${}^*(\theta)$ and they are pairwise $\theta$-apart. Indeed, $Y_0$ is $\theta$-apart from the other $W_i$ as they are subsets of $Y_1$ and $Y_0$ is $\theta$-apart from $Y_1$ by assumption. For $\tau, \tau' \in (\max Y_0)^{m}$, $j < k_{\tau}$ and $j' < k_{\tau'}$, $X_j^{\tau}$ and $X_{j'}^{\tau'}$ are $\theta$-apart, indeed, if $\tau = \tau'$, this is true by the inductive hypothesis, and if $\tau \neq \tau'$, we can assume without any loss of generality that $\tau <_{lex} \tau'$, and by \Cref{lem:product-is-tree}, $\bigcup_{i < \max Y_0} Z_{\tau \cdot i}$ is $\theta$-apart from $\bigcup_{i < \max Y_0} Z_{\tau' \cdot i}$, hence so are $X_j^{\tau}$ and $X_{j'}^{\tau'}$ as they are subsets of these unions. 

Finally, every $H \in \Pi_{i < k} [\min W_i, \max W_i]$ is $\bbomega^m$-large${}^*(\theta)$. Indeed, $\max W_0 = \max Y_0$, thus $\min H \leq \max Y_0$ and every $H^0 \in \Pi_{0 < i < k} [\min W_i, \max W_i]$ is $\bbomega^{m-1}\cdot (\max Y_0, \shortbrace{m}, \max Y_0)$-large${}^*(\theta)$ as every $H^{\tau} \in \Pi_{j < k_{\tau}} [\min X_j^{\tau}, \max X_j^{\tau}]$ is $\bbomega^{m-1}$-large${}^*(\theta)$ and is a subset of $\bigcup_{i < \max Y_0} Z_{\tau \cdot i}$ for $\tau \in (\max Y_0)^{m}$.
\end{proof}


\begin{corollary}\label[corollary]{cor:split-new-largeness-gen}
For every $n,m, s$ and for every~$\bbomega^{n+m}\cdot (2, k_1, \dots, k_s)$-large${}^*(\theta)$ set~$X$, there are some $k \in \NN$ and some $\bbomega^n$-large${}^*(\theta)$ pairwise $\theta$-apart subsets $X_0 < \dots < X_{k-1}$ of~$X$ such that every $H \in \prod_{i < k} [\min X_i, \max X_i]$ is $\bbomega^m \cdot (k_1, \dots, k_s)$-large${}^*(\theta)$.
\end{corollary}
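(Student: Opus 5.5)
The plan is to reduce \Cref{cor:split-new-largeness-gen} to \Cref{prop:split-new-largeness} by induction on~$s$, peeling off the tree structure given by \Cref{lem:product-is-tree}. Note the ordering convention: $\L\cdot(2,k_1,\dots,k_s) = (((\L\cdot 2)\cdot k_1)\cdots)\cdot k_s$. So an $\bbomega^{n+m}\cdot(2,k_1,\dots,k_s)$-large${}^*(\theta)$ set~$X$ contains $k_s$ pairwise $\theta$-apart subsets $Y_0<\dots<Y_{k_s-1}$. Each~$Y_j$ is $\bbomega^{n+m}\cdot(2,k_1,\dots,k_{s-1})$-large${}^*(\theta)$.

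The case $s=0$ is exactly \Cref{prop:split-new-largeness}. For the inductive step, apply the induction hypothesis to each~$Y_j$. This yields $k^{(j)}$ and pairwise $\theta$-apart $\bbomega^n$-large${}^*(\theta)$ sets $X^{(j)}_0<\dots<X^{(j)}_{k^{(j)}-1}\subseteq Y_j$. They have the property that every transversal $H^{(j)}\in\prod_{i<k^{(j)}}[\min X^{(j)}_i,\max X^{(j)}_i]$ is $\bbomega^m\cdot(k_1,\dots,k_{s-1})$-large${}^*(\theta)$. Concatenate all these families in the order $j=0,\dots,k_s-1$ and reindex them as $X_0<\dots<X_{k-1}$, where $k=\sum_j k^{(j)}$.

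Two properties must then be checked.

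\emph{Pairwise $\theta$-apartness.} Within a block it holds by the induction hypothesis. Across blocks $j<j'$, we have $X^{(j)}_i\subseteq Y_j$ and $X^{(j')}_{i'}\subseteq Y_{j'}$, and $Y_j,Y_{j'}$ are $\theta$-apart. Apartness passes to subsets, as remarked after the definition of $\theta$-apartness.

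\emph{Largeness of transversals.} Any $H\in\prod_{i<k}[\min X_i,\max X_i]$ splits as $H=\bigcup_j H^{(j)}$, with each $H^{(j)}$ a transversal of block~$j$. Hence each $H^{(j)}$ is $\bbomega^m\cdot(k_1,\dots,k_{s-1})$-large${}^*(\theta)$ and contained in $[\min Y_j,\max Y_j]$. Since the blocks are ordered, $H^{(0)}<\dots<H^{(k_s-1)}$. Then $H$ is $\bbomega^m\cdot(k_1,\dots,k_s)$-large${}^*(\theta)$ as soon as the $H^{(j)}$ are pairwise $\theta$-apart.

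This last apartness is the main obstacle. The sets $H^{(j)}$ need not be subsets of~$Y_j$, only of the interval $[\min Y_j,\max Y_j]$. However, $\theta$-apartness of $E<F$ depends only on $\max E$, $\min F$ and $\max F$. Since $\max H^{(j)}\le\max Y_j$, $\min H^{(j')}\ge\min Y_{j'}$ and $\max H^{(j')}\le \max Y_{j'}$, apartness of the~$H^{(j)}$ should follow from apartness of the~$Y_j$. The formula is $\forall x<\max E\,\exists y<\min F\,\forall z<\max F$, so decreasing $\max E$ and $\max F$ and increasing $\min F$ all preserve it. This is the monotonicity implicit in the subset remark, which is stated here for interval-bounded sets.

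The induction is on~$s$ for a $\Delta^0_0$-definable (bounded) property, so it is available in $\ISig_1$, as for \Cref{prop:split-new-largeness}.
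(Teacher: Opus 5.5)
Your proof is correct and takes essentially the paper's route. The paper reads $X$ as $(\bbomega^{n+m}\cdot 2)\cdot(k_1,\dots,k_s)$-large${}^*(\theta)$, applies \Cref{prop:split-new-largeness} to each $\bbomega^{n+m}\cdot 2$-large${}^*(\theta)$ leaf of the tree from \Cref{lem:product-is-tree}, and concatenates; your induction on $s$ does the same thing one level at a time, and your check that apartness depends only on $\max E$, $\min F$, $\max F$ supplies the detail, left implicit in the paper, that transversal pieces lying only in the intervals $[\min Y_j,\max Y_j]$ remain pairwise $\theta$-apart.
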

\begin{proof}
Immediate by \Cref{prop:split-new-largeness} and \Cref{lem:product-is-tree}. Indeed, any $\bbomega^{n+m}\cdot (2, k_1, \dots, k_s)$-large${}^*(\theta)$ set is $(\bbomega^{n+m}\cdot 2) \cdot (k_1, \dots, k_s)$-large${}^*(\theta)$.
\end{proof}

\subsection{Relation to largeness$(\theta)$}\label[section]{sec:new-largeness-link-to-old}

As mentioned in \Cref{subsec:new-largeness}, any $\bbomega^n$-large${}^*(\theta)$ set is $\bbomega^n$-large$(\theta)$. We now prove a partial reversal, which will be useful in \Cref{subsect:rt22-new-largeness} to translate existing theorems over largeness$(\theta)$ to theorems over largeness${}^*(\theta)$. However, the non-optimality of the translation makes it necessary to reprove a significant part of the framework to obtain better bounds.


By \Cref{lem:product-is-tree}, one can see any $\bbomega^n \cdot (\min X, \shortbrace{k} , \min X)$-large$(\theta)$ as a $\min X$-regular tree of depth~$k$ whose leaves are pairwise disjoint $\bbomega^n$-large$(\theta)$ sets, with $\theta$-apartness conditions. 
Similarly, by the definition of largeness$(\theta)$, any $\bbomega^{n+k}$-large$(\theta)$ set can be seen as a tree of $\bbomega^n$-large$(\theta)$ sets, with a level of branching of the parent nodes depending on the children. This \qt{dependently branching} tree is much more branching than $\min X$. The following lemma reflects this explanation:

\begin{lemma}\label[lemma]{lem:largeness-to-products}
Fix $n \geq 0$ and $k > 0$.
If $X$ is $\bbomega^{n+k}$-large$(\theta)$, then $X \setminus \{\min X\}$ is 
$\bbomega^n \cdot (\min X, \shortbrace{k} , \min X)$-large$(\theta)$.
\end{lemma}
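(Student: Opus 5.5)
The proof is by induction on $k$, for fixed $n$, with $m = \min X$ throughout. By \Cref{lem:product-is-tree} I read a set in $\L \cdot (m, \shortbrace{k}, m)$ as an $m$-regular tree of depth $k$. Its leaves are $\L$-large sets and its nodes are $\theta$-apart level by level. The useful consequence is $\L \cdot (m, \shortbrace{k+1}, m) = (\L \cdot (m, \shortbrace{k}, m)) \cdot m$: a set belongs to the left-hand side iff it contains $m$ pairwise $\theta$-apart subsets $Y_0 < \dots < Y_{m-1}$, each in $\L \cdot (m, \shortbrace{k}, m)$. This is exactly the decomposition in the inductive step of the proof of \Cref{lem:product-is-tree}. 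There $g(i)$ plays the role of $Y_i$ and the restriction $\sigma \mapsto g(i\cdot\sigma)$ is the subtree.

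The base case $k = 1$ is the definition. $X$ is $\bbomega^{n+1}$-large$(\theta)$ iff $X \setminus \{m\}$ is $\bbomega^n \cdot m$-large$(\theta)$, and that is $\bbomega^n \cdot (m)$.

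For the inductive step, let $X$ be $\bbomega^{n+k+1}$-large$(\theta)$. Unfolding the definition once, $X \setminus \{m\}$ contains pairwise $\theta$-apart $\bbomega^{n+k}$-large$(\theta)$ sets $Y_0 < \dots < Y_{m-1}$. Apply the induction hypothesis to each $Y_i$. This gives that $Y_i \setminus \{\min Y_i\}$ is $\bbomega^n \cdot (m_i, \shortbrace{k}, m_i)$-large$(\theta)$, where $m_i = \min Y_i > m$.

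I then need a monotonicity fact: if $a \geq m$, then every set in $\L \cdot (a, \shortbrace{k}, a)$ is in $\L \cdot (m, \shortbrace{k}, m)$. To prove it, take the tree map $g : a^{\leq k} \to \L$ from \Cref{lem:product-is-tree}. Restrict it to $m^{\leq k}$ and redefine each internal node as the union of its first $m$ children, computed bottom-up. Condition (a) then holds by construction. Condition (b) is inherited, because each new node is a subset of the old node and $\theta$-apartness and the order $<$ pass to subsets. The leaves are unchanged, so they are still $\L$-large. Applying this with $a = m_i$, each $Y_i \setminus \{\min Y_i\} \subseteq Y_i$ lies in $\L \cdot (m, \shortbrace{k}, m)$, where $\L$ is $\bbomega^n$-largeness$(\theta)$.

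These $m$ sets are subsets of the $Y_i$, so they remain ordered and pairwise $\theta$-apart. By the identity from the first paragraph, $X \setminus \{m\}$ is therefore in $\L \cdot (m, \shortbrace{k+1}, m)$, which closes the induction. The induction is on a statement that is $\Sigma_1$ in the finite set parameters, since it asserts the existence of a finite tree map, so $\ISig_1$ suffices.

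The main obstacle is the bookkeeping in the two structural facts. The first is that $(\L\cdot m^k)\cdot m$ coincides with $\L \cdot m^{k+1}$, even though the definition appends factors on the outside. The second is the monotonicity in the branching parameter. Both follow by direct manipulation of the tree maps of \Cref{lem:product-is-tree}.
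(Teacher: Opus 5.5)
Your proposal is correct and follows the paper's proof: induction on $k$ with the definition as the base case, then unfolding $X\setminus\{\min X\}$ into $\min X$ pairwise $\theta$-apart $\bbomega^{n+k}$-large$(\theta)$ blocks, applying the induction hypothesis to each block, weakening the branching from $\min Y_i$ to $\min X$, and reassembling. The paper uses the monotonicity in the branching parameter and the identity $\L \cdot (m, \shortbrace{k+1}, m) = (\L \cdot (m, \shortbrace{k}, m)) \cdot m$ without comment, so your tree-map arguments just make those implicit steps explicit (in the monotonicity argument, the recomputed internal nodes also lie in $\L$, since $\bbomega^n$-largeness$(\theta)$ is closed under supersets).
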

\begin{proof}
By induction on~$k$. 
Case $k = 1$. Then $X \setminus \{\min X\}$ is $\bbomega^n \cdot \min X$-large$(\theta)$ by definition.
Suppose $k > 1$. Then $X \setminus \{\min X\}$ is $\bbomega^{n+k-1} \cdot \min X$-large$(\theta)$.
Let $X_0 < \dots < X_{\min X-1} \subseteq X \setminus \{\min X\}$ be pairwise $\theta$-apart $\bbomega^{n+k-1}$-large$(\theta)$ sets.
By induction hypothesis, for every~$i < \min X$, letting $Y_i = X_i \setminus \{\min X_i\}$, $Y_i$ is $\bbomega^n \cdot (\min X_i, \shortbrace{k-1}, \min X_i)$-large$(\theta)$, hence is $\bbomega^n \cdot (\min X, \shortbrace{k-1}, \min X)$-large$(\theta)$. Moreover, the $Y_i$'s are pairwise $\theta$-apart, so $X \setminus \{\min X\}$ is $\bbomega^n \cdot (\min X, \shortbrace{k}, \min X)$-large$(\theta)$.
\end{proof}

We are now ready to prove a partial reversal to the fact that every $\bbomega^n$-large${}^*(\theta)$ set is $\bbomega^n$-large$(\theta)$.

\begin{proposition}\label[proposition]{prop:large-to-new-large}
If $X$ is $\bbomega^{\frac{n(n+1)}{2}}$-large$(\theta)$, then it is $\bbomega^n$-large${}^*(\theta)$.
\end{proposition}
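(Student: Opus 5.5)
We prove the statement by induction on $n$, using the stronger, more uniform claim: for every $n$ and every set $X$, if $X$ is $\bbomega^{\frac{n(n+1)}{2}}$-large$(\theta)$ then $X$ is $\bbomega^n$-large${}^*(\theta)$. Along the way we also need a version relativized to products, which we discuss below. The case $n = 0$ is immediate, since both notions reduce to non-emptiness.

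For the inductive step, suppose $X$ is $\bbomega^{\frac{(n+1)(n+2)}{2}}$-large$(\theta)$. Write
$$\frac{(n+1)(n+2)}{2} = \frac{n(n+1)}{2} + (n+1),$$
and apply \Cref{lem:largeness-to-products} with base exponent $\frac{n(n+1)}{2}$ and depth $k = n+1 > 0$. This gives that $X \setminus \{\min X\}$ is
$$\bbomega^{\frac{n(n+1)}{2}} \cdot (\min X, \shortbrace{n+1}, \min X)\mbox{-large}(\theta).$$
By \Cref{lem:product-is-tree}, which is stated for an arbitrary notion of largeness $\L$ and hence applies to $\L = \bbomega^{\frac{n(n+1)}{2}}$-large$(\theta)$, we obtain a $\min X$-regular tree $g$ of depth $n+1$ inside $X \setminus \{\min X\}$. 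Its leaves are $\bbomega^{\frac{n(n+1)}{2}}$-large$(\theta)$ sets, and nodes on the same level are ordered and pairwise $\theta$-apart.

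By the induction hypothesis, each leaf is $\bbomega^n$-large${}^*(\theta)$, that is, it lies in $L^\theta_n$. The same tree $g$ therefore satisfies the conditions of \Cref{lem:product-is-tree} for the notion $\L = L^\theta_n$. Conditions (a) and (b) depend only on the tree structure and on $\theta$-apartness, not on which largeness notion the leaves satisfy. The one point to check is that the internal nodes also belong to $L^\theta_n$. This holds because each internal node is a superset of a leaf and $L^\theta_n$ is closed under supersets. Hence $X \setminus \{\min X\} \in L^\theta_n \cdot (\min X, \shortbrace{n+1}, \min X)$, which is exactly the definition of $X \in L^\theta_{n+1}$.

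The main obstacle is the monotonicity in the leaf notion: showing that $\L \subseteq \L'$ implies $\L \cdot \sigma \subseteq \L' \cdot \sigma$. We intend to handle this through \Cref{lem:product-is-tree}, using superset-closure of $L^\theta_n$, as above. Alternatively, it follows by a direct induction on the length of $\sigma$, since $\L \cdot k \subseteq \L' \cdot k$ is immediate from the definition. Superset-closure of $L^\theta_n$ itself is proved by a routine induction on $n$, starting from the definition, and we would record it as a preliminary remark. Finally, the induction on $n$ is over a $\Delta_0$-definable (bounded) property of finite sets, so it goes through in $\ISig_1$, or even less.
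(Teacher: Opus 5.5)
Your proof is correct and follows the paper's argument: you split $\frac{(n+1)(n+2)}{2}$ as $\frac{n(n+1)}{2}+(n+1)$, apply \Cref{lem:largeness-to-products} to get a $\min X$-regular product of depth $n+1$, and then replace the leaf notion using the induction hypothesis. The monotonicity $\L\subseteq\L' \Rightarrow \L\cdot\sigma\subseteq\L'\cdot\sigma$ that you isolate is exactly what the paper leaves implicit in ``by induction hypothesis''. Of your two justifications, the direct induction on the length of $\sigma$ is the cleaner one, since it needs neither superset-closure of $L^\theta_n$ nor the hypothesis of \Cref{lem:product-is-tree} that the leaf class be a genuine notion of largeness.
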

\begin{proof}
By induction on~$n$.
Case $n = 0$. If $X$ is $\bbomega^0$-large$(\theta)$, then $X \neq \emptyset$ so $X$ is $\bbomega^0$-large${}^*(\theta)$.
Case $n > 0$. If $X$ is $\bbomega^{\frac{n(n+1)}{2}}$-large$(\theta)$, then it is $\bbomega^{\frac{(n-1)n}{2}+n}$-large$(\theta)$. By \Cref{lem:largeness-to-products}, $X \setminus \{\min X\}$ is $\bbomega^{\frac{(n-1)n}{2}} \cdot (\min X, \shortbrace{n}, \min X)$-large$(\theta)$. By induction hypothesis, $X \setminus \{\min X\}$ is $\bbomega^{n-1} \cdot (\min X, \shortbrace{n}, \min X)$-large${}^*(\theta)$, hence $X$ is $\bbomega^n$-large${}^*(\theta)$.
\end{proof}


\subsection{Cut}

Recall that a \emph{cut} is an initial segment of~$\NN$ closed under successor.
A cut~$I$ is \emph{additive} if for every~$n \in I$, $n+n \in I$.

Let $\Jcut_\theta$ (resp. $\Jcut^*_\theta$) be the set of all $n \in \NN$ such that every infinite set contains a finite subset that is $\bbomega^n$-large$(\theta)$ (resp. $\bbomega^n$-large${}^*(\theta)$). The authors showed in \cite{houerou2026pi} that $\Jcut_\theta$ is an additive cut, provably in $\RCA_0 + \BSig_2 + T$. In this section, we study the cut $\Jcut^*_\theta$, and show that it satisfies the same closure properties as $\Jcut_\theta$ over $\RCA_0 + \BSig_2 + T$, that is, it is an additive cut, and no more closure properties are provable, in that there is a model $\M = (M, S) \models \RCA_0 + \BSig_2 + T$ and some $a \in M \setminus \omega$ such that $\Jcut^*_\theta(\M) = \sup ( a \cdot n : n \in \omega )$.

\begin{lemma}[$\RCA_0 + \BSig_2 + T$]\label[lemma]{lem:infinite-pairwise-apart-large}
Let $\L$ be a largeness notion. For every infinite set~$X$,
there is an infinite sequence $E_0 < E_1 < \dots$ of pairwise $\theta$-apart $\L$-large subsets of $X$.
\end{lemma}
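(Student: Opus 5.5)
We build the sequence greedily by a $\Delta^0_2$-style construction, using $T$ to find the witnesses $y$ and $\BSig_2$ to bound them uniformly over a finite range of $x$. The key observation is that $\theta$-apartness of $E<F$ only requires, for each $x<\max E$, some witness $y<\min F$ such that $\forall z<\max F\,\theta(x,y,z)$. Any $y$ with $\forall z\,\theta(x,y,z)$ (a true $\Pi^0_1$ witness) works for every later $F$, as long as $\min F>y$.

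The first step is a uniform bounding claim: for every $b$, there is some $c$ such that for every $x<b$ there is $y<c$ with $\forall z\,\theta(x,y,z)$. To see this, note that $T$ gives $\forall x<b\,\exists y\,\forall z\,\theta(x,y,z)$, which is a bounded quantifier in front of a $\Sigma^0_2$ formula. $\BSig_2$ then yields a uniform bound $c$.

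Next, we define the sequence. Let $E_0$ be an $\L$-large subset of $X$; it exists because $X$ is infinite and $\L$ is a largeness notion. Given $E_0<\dots<E_s$, take $c_s$ from the claim with $b=\max E_s$, and let $E_{s+1}$ be an $\L$-large subset of $X\cap(\max(c_s,\max E_s),\infty)$; this set is infinite, so such a subset exists. Then $E_s$ and $E_{s+1}$ are $\theta$-apart. Indeed, for $x<\max E_s$ the true witness $y<c_s<\min E_{s+1}$ satisfies $\theta(x,y,z)$ for all $z$, in particular for all $z<\max E_{s+1}$. Since $\theta$-apartness is transitive (as noted after its definition), and in fact the same argument applies directly to any $E_s$ and $E_t$ with $s<t$ because $c_s<\min E_t$, the sequence is pairwise $\theta$-apart.

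The main obstacle is making this construction an actual set in $\RCA_0$. The choice of $c_s$ involves a $\Pi^0_1$ condition, so it is not computable from $X$. We cannot simply search for the least valid $c$, since verifying "$\forall z\,\theta$" is $\Pi^0_1$. We therefore avoid needing true witnesses and instead require apartness only up to the next block, using a computable search: given $E_s$, search for a pair $(F,c)$ with $F\subseteq X$ an $\L$-large set with $\min F>c>\max E_s$ such that for all $x<\max E_s$ there is $y<c$ with $\forall z<\max F\,\theta(x,y,z)$. This condition is $\Delta^0_0$ in the finite data, assuming $\L$ is (or can be taken to be) $\Delta^0_0$-definable as in our applications. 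Such a pair exists: take $c$ from the claim and any large $F$ above $c$. So the search terminates and defines $E_{s+1}=F$ computably from $X$ by $\Delta^0_1$-recursion, and the sequence exists by $\Delta^0_1$-comprehension.

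Pairwise apartness for non-consecutive indices then needs care, because the witness found for $E_s$ was verified only up to $\max E_{s+1}$. To handle this, strengthen the search to demand, at stage $s+1$, witnesses for all $x<\max E_s$ (the largest earlier max) valid up to $\max F$. This condition checks apartness of the new block against all previous blocks simultaneously: for each $s'\le s$, require $\forall x<\max E_{s'}\,\exists y<\min E_{s'+1}\,\forall z<\max F\,\theta$. Existence again follows from the uniform bound, because a true witness $y<c_{s'}<\min E_{s'+1}$ exists for each $x$ provided the earlier blocks were chosen above the true bound. To secure that, at each stage we also choose $\min E_{s'+1}$ above $c_{s'}$. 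We verify by $\ISig_1$ (indeed $\BSig_2$) along the construction that the search always succeeds.
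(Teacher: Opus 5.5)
Your fourth paragraph is already a complete and correct proof, essentially the paper's. It is a computable greedy search for an $\L$-large $F\subseteq X$ above the previous block satisfying a $\Delta^0_0$ apartness condition. Termination at each stage follows from the $\BSig_2$-bound on the true $\Pi^0_1$-witnesses supplied by $T$. The paper searches for $F$ that is $\theta$-apart from all of $E_0,\dots,E_{k-1}$ at once, which is equivalent.

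The problem is your final paragraph, and the worry behind it is unfounded. For $s'<t$, $\theta$-apartness of $E_{s'}$ and $E_t$ asks, for each $x<\max E_{s'}$, for some $y<\min E_t$ with $\theta(x,y,z)$ for all $z<\max E_t$. Since $\max E_{s'}\le\max E_{t-1}$, these witnesses are exactly the ones verified at stage $t$, when $E_t$ was chosen apart from $E_{t-1}$; the witnesses verified at stage $s'+1$ play no role. So consecutive apartness already gives pairwise apartness from the finite data alone. This is the same monotonicity you invoked in your third paragraph, and it does not need the witnesses to be true ones.

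The strengthened search you substitute is worse than unnecessary: it can fail. You require, for all $s'\le s$ and all $x<\max E_{s'}$, some $y<\min E_{s'+1}$ with $\theta(x,y,z)$ for all $z<\max F$. That is stronger than apartness of $E_{s'}$ and $F$, and it can be unsatisfiable. The witnesses below $\min E_{s'+1}$ found by the computable search at stage $s'+1$ were only checked up to $\max E_{s'+1}$, and they may all fail at some larger $z$. After that, no $F$ with $\max F$ beyond that $z$ qualifies, and there need not be an $\L$-large $F\subseteq X$ above $\max E_s$ with maximum below it.

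Your remedy, choosing $\min E_{s'+1}$ above the true bound $c_{s'}$, reintroduces exactly the non-computable choice (it depends on $\Pi^0_1$ facts) that you had identified as the obstacle. The construction is then no longer a $\Delta^0_1$-recursion, and the closing appeal to $\ISig_1$ does not show the search terminates. Delete the last paragraph, including its restatement of apartness with $y<\min E_{s'+1}$, and the proof goes through as in the paper.
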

\begin{proof}
Consider the greedy algorithm enumerating such a sequence. 
Once $E_0< \dots< E_{k-1}$ have been listed, search for some $E_k > E_{k-1}$ that is $\theta$-apart from $E_0, \dots, E_{k-1}$ and $\L$-large. If for some $k \in \NN$, $E_0, \dots, E_{k-1}$ have been outputted by this procedure, since $T$ and $\BSig_2$ hold, there exists a bound $b_k$ such that every subset of $[b_k,+\infty)$ is $\theta$-apart from $E_0 \cup \dots \cup E_{k-1}$.
Since $X \cap [b_k, +\infty)$ is an infinite set and $\L$ is a largeness notion, there exists some $\L$-large subset $E_k \subseteq X \cap [b_k, +\infty)$, hence the greedy algorithm will eventually list some $E_k$.
\end{proof}

\begin{proposition}[$\RCA_0 + \BSig_2 + T$]\label[proposition]{prop:new-cut-is-additive}
$\Jcut_{\theta}^*$ is an additive cut.
\end{proposition}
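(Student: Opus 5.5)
To show that $\Jcut^*_\theta$ is an additive cut, I will check three things in order: that it contains $0$, that it is downward closed, and that it is closed under addition (which also gives closure under successor, since $1 \in \Jcut^*_\theta$ follows from the construction step below). Closure under addition is the main step.

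\textbf{Basic closure.} Clearly $0 \in \Jcut^*_\theta$, since every infinite set has a nonempty finite subset. For downward closure, I will show by induction that every $\bbomega^{n+1}$-large${}^*(\theta)$ set is $\bbomega^n$-large${}^*(\theta)$. Unfolding one step of the definition, $X \setminus \{\min X\}$ contains a tree of $\bbomega^{n}$-large${}^*(\theta)$ leaves. Any single leaf is $\bbomega^n$-large${}^*(\theta)$, so $X$ is too, by closure under supersets. Closure under supersets is immediate from the definitions, because $\theta$-apartness and the tree representation of \Cref{lem:product-is-tree} only require subsets. Note that this is a $\Sigma_0$-style induction on $n$ for a fixed finite $X$. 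That is fine, although downward closure of $\Jcut^*_\theta$ itself only needs the single step $n+1 \to n$.

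\textbf{Additivity.} Suppose $a, b \in \Jcut^*_\theta$; I will show $a+b \in \Jcut^*_\theta$. By \Cref{prop:new-large-set-of-large-sets-is-large}, it suffices to find, inside a given infinite set $X$, pairwise $\theta$-apart $\bbomega^a$-large${}^*(\theta)$ sets $X_0 < \dots < X_{k-1}$ such that $\{\max X_s : s<k\}$ is $\bbomega^{\lfloor a/2\rfloor + 2b}$-large${}^*(\theta)$.

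\begin{enumerate}
\item Apply \Cref{lem:infinite-pairwise-apart-large} with $\L$ the $\bbomega^a$-large${}^*(\theta)$ sets; this is a largeness notion because $a \in \Jcut^*_\theta$. This yields an infinite sequence $E_0 < E_1 < \cdots$ of pairwise $\theta$-apart $\bbomega^a$-large${}^*(\theta)$ subsets of $X$.
\item The set $M = \{\max E_i : i \in \NN\}$ is infinite and exists by $\Delta^0_1$-comprehension, since the sequence is computable from $X$.
\item Provided $\lfloor a/2\rfloor + 2b \in \Jcut^*_\theta$, $M$ contains a finite $\bbomega^{\lfloor a/2\rfloor+2b}$-large${}^*(\theta)$ subset $\{\max E_{i_0} < \dots < \max E_{i_{k-1}}\}$.
\item Taking $X_s = E_{i_s}$ and applying the construction proposition then gives an $\bbomega^{a+b}$-large${}^*(\theta)$ subset of $X$.
\end{enumerate}

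\textbf{Main obstacle.} Step 3 needs $\lfloor a/2\rfloor + 2b \in \Jcut^*_\theta$, which is not directly available from $a, b \in \Jcut^*_\theta$. I will resolve this by first proving the weaker closure $c \in \Jcut^*_\theta \Rightarrow c + c \in \Jcut^*_\theta$ in a form that avoids the factor $2$, and then bootstrapping. Concretely, I would first establish closure under the operation $c \mapsto c + \lceil c/4\rceil$. Apply the argument above with $a = c$ and $b = \lfloor c/4\rfloor$: then $\lfloor a/2\rfloor + 2b \le c$, so step 3 only needs $c \in \Jcut^*_\theta$, and downward closure handles the inequality. This gives $c + \lfloor c/4\rfloor \in \Jcut^*_\theta$.

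Iterating this operation a standard finite number of times (about $3$ times, since $(5/4)^3 > 2$), together with downward closure, yields $2c \in \Jcut^*_\theta$. For $c < 4$, the statement $1 \in \Jcut^*_\theta$ and small cases are handled directly: use $a = c$, $b = 0$ and add one successor step via the definition, obtaining a set $\{\min\} \cup$ a tree of $\theta$-apart $\bbomega^c$-large${}^*(\theta)$ sets by \Cref{lem:infinite-pairwise-apart-large} and \Cref{lem:product-is-tree}.

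The delicate points to verify are, first, that all of this is carried out inside $\RCA_0 + \BSig_2 + T$, where $\BSig_2$ and $T$ enter only through \Cref{lem:infinite-pairwise-apart-large}; and second, that the construction proposition is stated over $\ISig_1$, which is available.
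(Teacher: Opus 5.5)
Your core step is the paper's: from $c\in\Jcut^*_\theta$, \Cref{lem:infinite-pairwise-apart-large} together with \Cref{prop:new-large-set-of-large-sets-is-large} for $a=c$, $b=\lfloor c/4\rfloor$ gives $c+\lfloor c/4\rfloor\in\Jcut^*_\theta$, and a fixed number of iterations then yields $2c$.

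You differ from the paper in how small values enter the cut. The paper builds nothing by hand. It imports from \cite{houerou2026pi} that $\Jcut_\theta$ is a cut over $\RCA_0+\BSig_2+T$, and transfers this through \Cref{prop:large-to-new-large} to get $\omega\subseteq\Jcut^*_\theta$. It uses the $\lfloor n/4\rfloor$ step only for nonstandard $n$. Your route is self-contained: it never mentions largeness$(\theta)$. It also makes the case split internal through the numerical threshold $c\ge 4$, where $c+\lfloor c/4\rfloor\ge c+1$ already gives successor closure. This is arguably cleaner than the paper's \qt{if $n\in\omega$, we are done}, which must itself be read as such a threshold to be a proof inside the theory.

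Two points need repair.

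First, the direct successor step for $c<4$. One application of \Cref{lem:infinite-pairwise-apart-large} gives only pairwise $\theta$-apart leaves. Condition (b) of \Cref{lem:product-is-tree} at inner levels demands $\theta$-apartness of \emph{unions} of leaves, and this does not follow. For $E_0<E_1<E_2<E_3$ pairwise apart, the pair $E_1,E_2$ gives a witness $y<\min E_2$ valid only for $z<\max E_2$, not up to $\max E_3$. This is exactly the non-preservation the paper cites as the reason for introducing largeness${}^*(\theta)$. Instead, fix $x=\min X$ and apply the lemma once per level to $\L_j=L^\theta_c\cdot(x,\shortbrace{j},x)$ for $j=0,\dots,c$. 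If $\L_j$ is a largeness notion, the first $x$ terms of the sequence the lemma produces show that $\L_{j+1}$ is one too. Since the depth $c+1\le 4$ is standard, this is a fixed finite chain of applications. It needs no induction on a $\Pi^1_1$ statement.

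Second, the arithmetic. Since $(5/4)^3\approx 1.95<2$, three iterations do not suffice. Four do for every $c\ge 4$, with equality at $c=4,5,6$. Alternatively, use the paper's cruder bound $c+5\lfloor c/4\rfloor\ge 2c$ for $c\ge 16$ and get the remaining small cases from successor closure. Also keep $b=\lfloor c/4\rfloor$ throughout. With $\lceil c/4\rceil$ the requirement $\lfloor c/2\rfloor+2b\le c$ fails (e.g.\ $c=5$).
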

\begin{proof}
Any $\bbomega^{n+1}$-large${}^*(\theta)$ set contains many $\bbomega^n$-large${}^*(\theta)$ subsets, so by $\Delta^0_0$-induction which holds over $\RCA_0$, $\Jcut_\theta^*$ is an initial segment of~$\NN$.

By \cite{houerou2026pi}, $\RCA_0 + \BSig_2 + T$ proves that $\Jcut_\theta$ is a cut, so $\omega \subseteq \Jcut_\theta$.
By \Cref{prop:large-to-new-large}, every $\bbomega^{\frac{n(n+1)}{2}}$-large$(\theta)$ set is $\bbomega^n$-large${}^*(\theta)$. It follows that $\omega \subseteq \Jcut^*_\theta$.

We now prove that if $n \in \Jcut_{\theta}^*$, then $n+\lfloor \frac{n}{4}\rfloor \in \Jcut_{\theta}^*$.
Suppose that $n \in \Jcut_{\theta}^*$. Let $X$ be an infinite set. 
By \Cref{lem:infinite-pairwise-apart-large}, there is a countable collection $E_0 < E_1 < \dots$ of pairwise $\theta$-apart  $\bbomega^n$-large${}^*(\theta)$ subsets of $X$.
The set $Y = \{ \max E_s : s \in \NN \}$ is infinite, so there is an $\bbomega^n$-large${}^*(\theta)$ subset~$Z \subseteq X$.
By \Cref{prop:new-large-set-of-large-sets-is-large} with $a = n$ and $b = \lfloor \frac{n}{4}\rfloor$,
$\bigcup_{s \in Z} E_s$ is an $\bbomega^{n+\lfloor \frac{n}{4}\rfloor}$-large${}^*(\theta)$ subset of~$X$, so $n + \lfloor \frac{n}{4}\rfloor \in \Jcut_{\theta}^*$.

Next, we prove that if $n \in \Jcut_{\theta}^*$, then $n+n \in \Jcut_{\theta}^*$.
Let $n \in \Jcut_{\theta}^*$. If $n \in \omega$, we are done, so suppose not. By the previous claim, $n + 5 \lfloor \frac{n}{4}\rfloor \in \Jcut_{\theta}^*$. Note that since $n \not \in \omega$ (and in particular $n \geq 16$), $5 \lfloor \frac{n}{4}\rfloor > n$, so since $\Jcut_{\theta}^*$ is an initial segment of $\NN$, $n + n \in \Jcut_{\theta}^*$.

Since $\Jcut_\theta^*$ is an initial segment of~$\NN$ closed under addition and containing~$\omega$, it is also closed under successor, so $\Jcut_\theta^*$ is an additive cut.
\end{proof}

The remainder of this section is mostly focused on proving that $\Jcut_{\theta}^*$ satisfies no better closure property than additivity over $\RCA_0 + \BSig_2 + T$.
A cut $I \subseteq M$ is said to be \emph{semi-regular} if for every $M$-finite set $E \subseteq M$ such that $|E| \in I$, $E \cap I$ is bounded in $I$.
The following proposition shows that semi-regular cuts are exactly those coding a model of~$\WKL_0$:

\begin{proposition}[{See Scott \cite{scott1962algebras} and Kirby and Paris \cite[Proposition 1]{Kirby1977InitialSO}}]
Let $I \subseteq M$ be a cut and let $\Cod(M/I) = \{S \cap I : S \mbox{ is } M\mbox{-finite}\}$. Then $I$ is semi-regular if and only if $(I, \Cod(M/I)) \models \WKL_0$.   
\end{proposition}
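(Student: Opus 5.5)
The plan is to prove the two directions separately. Throughout, $M$ is taken to satisfy enough induction (say $\ISig_1$, or at least $\mathsf{I}\Delta_0 + \exp$) to support overspill arguments and to code bounded sets and finite functions. Elements of $\Cod(M/I)$ are written $S \cap I$ with $S$ $M$-finite.

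\emph{From $\WKL_0$ to semi-regularity.} Assume $(I, \Cod(M/I)) \models \WKL_0$, and let $E$ be $M$-finite with $|E| \in I$. Suppose, towards a contradiction, that $E \cap I$ is unbounded in $I$. Then $X = E \cap I$ belongs to $\Cod(M/I)$ and is infinite in the sense of $(I,\Cod(M/I))$.

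Already $\RCA_0$ proves that an infinite set has finite subsets of every size. Apply this with size $|E|+1 \in I$. We get a finite set $F \subseteq X$ of size $|E|+1$ in the sense of $(I, \Cod(M/I))$. Since $F$ is bounded in $I$, it is coded by an element of $I$, and its cardinality computed in $I$ agrees with its cardinality computed in $M$, because counting bounded sets is absolute between a cut and $M$. But $F \subseteq E$ in $M$, so $|F| \leq |E|$, a contradiction.

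\emph{From semi-regularity to $\WKL_0$.} Assume $I$ is semi-regular. The argument has three steps.

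First, I would check the basic closure properties of $I$: it is closed under $+$ and $\times$. I would try to derive this from semi-regularity by applying it to sets like $E = \{a\cdot i : i < a\}$, which have size $a \in I$.

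Second, I would verify $\Delta^0_1$-comprehension and $\ISig_1$ in $(I, \Cod(M/I))$. For comprehension, a $\Delta_0$-definable subset of $I$ with parameters from $\Cod(M/I)$ is the trace on $I$ of the $M$-finite set it defines below some $c > I$. Here one uses that $\Delta_0$ formulas with parameters in $I$ are absolute between $I$ and $M$.

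The key point for $\Sigma^0_1$ formulas $\exists y\, \theta(x,y,S\cap I)$ is collection. For $n \in I$, consider the $M$-finite set $E$ of least witnesses below a fixed $c > I$ for each $x < n$. This set has size at most $n \in I$, so by semi-regularity $E \cap I$ is bounded in $I$ by some $b \in I$. Hence $\BSig_1$ holds, and every $\Sigma_1$ property restricted to $[0,n)$ becomes bounded by $b$.

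Using this, $\Sigma^0_1$-induction in $(I,\Cod(M/I))$ reduces to $\Delta_0$-induction, or least-number principle, for an $M$-finite set. That in turn holds because every nonempty $M$-finite set has a least element in $M$. Similarly, $\Delta^0_1$-definable sets become $\Delta_0$-definable with parameters in $I$, and hence are coded.

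Third, I would prove weak König's lemma by overspill. Let $T = S \cap I$ be an infinite binary tree in $(I,\Cod(M/I))$. Consider the $M$-definable, indeed $\Delta_0$, property of $\ell$: "there is $\sigma \in 2^{\ell}$ all of whose initial segments lie in $S$". This property holds for every $\ell \in I$. By overspill in $M$ it holds for some $c > I$. Pick such a $\sigma \in 2^c$. Then $P = \{x \in I : \sigma(x) = 1\}$ is in $\Cod(M/I)$, and every initial segment of $P$ of length in $I$ lies in $S \cap I = T$. So $P$ is a path through $T$.

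\emph{Main obstacle.} I expect the delicate step to be the second one: establishing $\BSig_1$ and $\ISig_1$ in $(I,\Cod(M/I))$ from semi-regularity alone. In particular, one must make sure the witness sets used are genuinely $M$-finite, and that $I$ is closed under the arithmetic operations needed to code pairs and tuples.
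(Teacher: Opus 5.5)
The paper does not prove this statement; it only cites Scott and Kirby--Paris. Your plan follows the standard argument behind it, and it is essentially correct. $\ISig_1$ in $(I,\Cod(M/I))$ forces semi-regularity by counting. Semi-regularity gives $\ISig_1$ through a set of least witnesses. Weak K\"onig's lemma follows by overspill.

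The step you flagged as the main obstacle is already handled by your own argument, and it gives more than you state. For each $x<n$ with a witness in $I$, the least $M$-witness below $c$ also lies in $I$, and $E$ contains it. So you get strong $\Sigma^0_1$-collection: for every $n$ there is $b$ with $\forall x<n\,[\exists y\,\theta \to \exists y<b\,\theta]$. It is this strong form that lets you replace a $\Sigma^0_1$ formula by a bounded one on $[0,n]$. That reduces $\Sigma^0_1$-induction to the least-number principle for an $M$-finite set.

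\textbf{Gap: $\Delta^0_1$-comprehension.} The step that does not work as written is $\Delta^0_1$-comprehension. Strong collection does not make a $\Delta^0_1$-definable set $A$ globally $\Delta_0$-definable with parameters in $I$. It only gives, for each $n\in I$, a bound $b_n\in I$ making $A\cap[0,n)$ bounded-definable. That alone does not produce a single $M$-finite set whose trace on $I$ is $A$.

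\textbf{Fix.} Use the least-witness idea with a parameter $c>I$ instead. Suppose $A=\{x:\exists y\,\theta_0(x,y,X)\}=\{x:\neg\exists y\,\theta_1(x,y,X)\}$ with $X=S\cap I$. Let $A'$ be the set of $x<c$ such that the least $y<c$ with $\theta_0(x,y,S)\vee\theta_1(x,y,S)$ exists and satisfies $\theta_0(x,y,S)$. For $x\in I$, exactly one of $\theta_0,\theta_1$ has a witness in $I$, so this least $y$ lies in $I$. Then $\Delta_0$-absoluteness gives $A'\cap I=A$, and semi-regularity is not even needed here.

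\textbf{Two minor points.} First, in your first step additive closure must come first, using $E=\{a+j:j<a\}$. Cofinality of $\{a\cdot i:i<a\}\cap I$ relies on $y+a\in I$. Second, the statement needs $I$ to be a proper cut, which your overspill and your choice of $c>I$ tacitly assume. For $I=M$ the cut is trivially semi-regular, but $(M,\Cod(M/M))$ fails $\Delta^0_0$-comprehension, since $M$ itself is not $M$-finite.
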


In what follows, let $\PAm$ denote the axioms of a discrete ordered commutative semiring (see \cite[Definition 4.1]{hirschfeldt2015slicing} for instance).

\begin{proposition}\label[proposition]{prop:semiregular-wf-add}
Let~$M \models \PAm + \Bsig_2 + T$ be a countable non-standard model.
There exists some~$a \in M \setminus \omega$ and a proper semi-regular cut~$I \prec_e M$ such that, letting $\M = (I, \Cod(M/I))$, $\M \models \WKL_0 + \BSig_2 + T$ and $\Jcut_\theta^*(\M) = \sup \{ a \cdot k : k \in \omega \}$.
\end{proposition}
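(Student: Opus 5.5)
We will follow the construction used in \cite{houerou2026pi} for $\Jcut_\theta$, adapted to largeness${}^*(\theta)$. Fix a non-standard $c \in M \setminus \omega$. The idea is to first find a finite set $F \subseteq M$ and some $a$ such that $F$ is $\bbomega^{a\cdot k}$-large${}^*(\theta)$ (in the sense of $M$) for every $k \in \omega$, but not $\bbomega^{a \cdot c}$-large${}^*(\theta)$. Such $a$ exists by an overspill/underspill argument on the largest $n$ such that some fixed $F$ with sufficiently non-standard largeness is $\bbomega^n$-large${}^*(\theta)$: take $F$ with $\bbomega^{b}$-large${}^*(\theta)$ maximal at some non-standard $b$, and let $a = \lfloor b/c\rfloor$ for a suitable non-standard $c$ with $a$ still non-standard. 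We then build a semi-regular cut $I$ with $\max F \notin I$ and $\min F \in I$, and such that the largeness structure of $F$ is ``inherited'' by $I$.

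The key steps are as follows. First, using $T$ and $\BSig_2$ in $M$, we choose the cut $I$ as an infinite intersection/union of levels along a decreasing sequence $F = F_0 \supseteq F_1 \supseteq \dots$ of $M$-finite sets. Here $F_{j+1}$ is obtained by the standard Paris--Kirby style descent: at stage $j$ we either handle the $j$-th $M$-finite set $E$ to ensure semi-regularity, or handle a $\Delta^0_0$ instance ensuring $T$ holds in $I$. We do this by passing to a $\theta$-apart sub-block via \Cref{lem:product-is-tree}, losing only a standard multiple of $a$ in the exponent at each stage. \Cref{prop:split-new-largeness} and \Cref{cor:split-new-largeness-gen} are what make the loss additive: an $\bbomega^{a(k+1)}\cdot 2$-large${}^*(\theta)$ set splits into $\bbomega^{ak}$-large pieces whose ``index set'' is $\bbomega^{a}$-large. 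We then set $I = \sup_j \min F_j$, which is a cut because each $F_j$ keeps non-standard largeness. Semi-regularity and $\BSig_2 + T$ in $(I,\Cod(M/I))$ follow as in \cite{houerou2026pi}, the $\theta$-apartness at each node being exactly what forces $\forall x\exists y\forall z\,\theta$ to hold below $I$.

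Second, we compute $\Jcut^*_\theta(\M)$. For the lower bound, we show that each $a\cdot k$ lies in $\Jcut^*_\theta(\M)$: an infinite set in $\M$ is coded by some $M$-finite $S$ with $S\cap I$ unbounded, and the construction guarantees (by passing to later $F_j$) that $S$ meets enough blocks to contain an $\bbomega^{ak}$-large${}^*(\theta)$ subset below $I$, using \Cref{prop:new-large-set-of-large-sets-is-large} to reassemble pieces. For the upper bound, the set $\{\min F_j : j\in\omega\}$-like subsets of $F$ intersected with $I$ give an infinite set in $\M$ with no $\bbomega^{a\cdot c'}$-large${}^*(\theta)$ subset for any non-standard $c'\le c$, since such a subset would lift to $F$ and contradict maximality. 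Note that the quadratic translation of \Cref{prop:large-to-new-large} must be avoided here, so everything has to be done directly with largeness${}^*(\theta)$.

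The main obstacle is the semi-regularity and $T$-preservation step with only additive loss. With largeness$(\theta)$ the tree is dependently branching, whereas here the branching is fixed at $\min X$ with depth $n+1$, so the descent must be organized along the product trees of \Cref{lem:product-is-tree}. We would try first to handle each requirement by a pigeonhole over the $\min X$ children at one tree level, which costs one level of depth, i.e.\ a constant in the exponent, hence standard loss overall.
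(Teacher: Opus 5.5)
Your plan has two genuine gaps, both in computing $\Jcut^*_\theta(\M)$. The semi-regularity and $T$ requirements, which you single out as the main obstacle, are the easy part. Each costs only one level: a pigeonhole over the $\min F_n$ top-level blocks, respectively passing to the second of two $\theta$-apart blocks.

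\emph{Upper bound.} To get $\Jcut^*_\theta(\M)\subseteq\sup\{a\cdot k : k\in\omega\}$ you need, for \emph{every} non-standard $c'$, a set in $\Cod(M/I)$ cofinal in $I$ with no $\bbomega^{a\cdot c'}$-large${}^*(\theta)$ subset. This includes $c'$ far below your fixed $c$. Your justification, that such a subset ``would lift to $F$ and contradict maximality'', does not work. $F$ is $\bbomega^{b}$-large${}^*(\theta)$ with $b\geq a\cdot c\geq a\cdot c'$, and $F\cap I$ may contain whole early blocks of $F$, so such subsets can exist. Maximality of $b$ only excludes $\bbomega^{b+1}$. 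Moreover, by overspill, no single $M$-finite set is $\bbomega^{a\cdot k}$-large${}^*(\theta)$ for all $k\in\omega$ without being $\bbomega^{a\cdot c'}$-large${}^*(\theta)$ for some non-standard $c'$. So the exact cut has to be produced along the descent. The paper does this as follows:
\begin{itemize}
\item It enumerates all non-standard elements $(c_i)_{i\in\omega}$, using countability of $M$.
\item At stage $4i$ it shrinks to some $F_{4i}$ with $\min F_{4i}>\min F_{4i-1}$ that is still $\bbomega^{a\cdot c}$-large${}^*(\theta)$ for some non-standard $c$, but not $\bbomega^{a\cdot c_i}$-large${}^*(\theta)$. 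Such $c$ exists since there is no least non-standard element.
\item The witness is $F_{4i}\cap I$, not a subset of the original $F$.
\end{itemize}

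\emph{Lower bound.} Your claim that ``the construction guarantees that $S$ meets enough blocks'' is not backed by any requirement. A cofinal $S$ can be arbitrarily thin, and nothing in your descent forces it to contain large sets below $I$. The missing requirement is this. Enumerate all $M$-finite sets $S_m$ that are \emph{not} $\bbomega^{a+1}$-large${}^*(\theta)$. View $F_n$ as $\bbomega^{a(c-2)+(a+1)}\cdot 2$-large${}^*(\theta)$ and apply \Cref{prop:split-new-largeness}. Since every transversal $H$ of the resulting blocks is $\bbomega^{a+1}$-large${}^*(\theta)$, some block interval misses $S_m$, and you pass to that block.

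Then any $U'$ with $U'\cap I$ cofinal in $I$ must be $\bbomega^{a+1}$-large${}^*(\theta)$. Its least $\bbomega^{a}$-large${}^*(\theta)$ initial segment $V$ is not $\bbomega^{a+1}$-large${}^*(\theta)$, so it is some $S_j$ and is bounded in $I$. Hence $V\subseteq U'\cap I$. This only yields $a\in\Jcut^*_\theta(\M)$. The multiples $a\cdot k$ then come from \Cref{prop:new-cut-is-additive} applied inside $\M\models\RCA_0+\BSig_2+T$, not from reassembling blocks with \Cref{prop:new-large-set-of-large-sets-is-large}.

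So the deconstruction proposition is needed here, for the lower bound on the cut, and not in the semi-regularity step where you placed it.
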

\begin{proof}
By \Cref{prop:new-cut-is-additive}, for every $n \in \omega$, $\PAm + \Bsig_2 + T$ proves that there exists some exp-sparse $\bbomega^n$-large${}^*(\theta)$ set. Therefore, by overflow, there exists some $a \in M \setminus \omega$ and an exp-sparse $\bbomega^{a \cdot a}$-large${}^*(\theta)$ subset of $M$. Let $F_{-1}$ be such a set.

Fix $(S_n)_{n \in \omega}$ an enumeration of all the $M$-finite sets that are not $\bbomega^{a+1}$-large${}^*(\theta)$, fix $(E_i)_{i \in \omega}$ an enumeration containing all the $M$-finite sets infinitely many times, and fix $(c_n)_{n \in \omega}$ an enumeration of all the non-standard elements of $M$.
The cut $I$ will be defined as $\sup \{\min(F_n) | n \in \omega\}$ for $F_{-1} \supseteq F_0 \supseteq F_1 \dots$ a decreasing sequence of $M$-finite set satisfying for all $i \in \omega$:

\begin{enumerate}
    \item[(1)] $F_i$ is $\bbomega^{a \cdot c}$-large${}^*(\theta)$ for some non-standard $c$. 
    \item[(2)] $F_{4i}$ is not $\bbomega^{a \cdot c_i}$-large${}^*(\theta)$ and $\min F_{4i} > \min F_{4i-1}$ for all $i \in \omega$ 
    \item[(3)] $[\min F_{4i+1}, \max F_{4i+1}] \cap S_i = \emptyset$
    \item[(4)] If $|E_i| < \min F_{4i+1}$ then $[\min F_{4i+2}, \max F_{4i+2}] \cap E_i = \emptyset$
    \item[(5)] $\{\min F_{4i+2} \}$ is $\theta$-apart from $F_{4i+3}$.
\end{enumerate}

Assume $F_{n}$ has already been defined for some $n \in \omega \cup \{-1\}$.

If $n + 1 = 4m$ for some $m$: Let $F_{4m} \subseteq F_n$ be such that $F_{4m}$ is $\bbomega^{a \cdot c}$-large${}^*(\theta)$ for some non-standard $c$ but not $\bbomega^{a \cdot c_i}$-large${}^*(\theta)$, and $\min F_n < \min F_{4m}$.

If $n + 1 = 4m+1$ for some $m$: Let $c$ non-standard be such that $F_n$ is $\bbomega^{a \cdot c}$-large${}^*(\theta)$. $F_n$ being $\bbomega^{a \cdot (c-2) + (a+1)} \cdot 2$-large${}^*(\theta)$, by \Cref{prop:split-new-largeness}, there exists a family $X_0 < \dots < X_{k-1}$ of $\bbomega^{a \cdot (c-2)}$-large${}^*(\theta)$ subsets of $X$ such that every $H \in \Pi_{i < k} [\min X_i, \max X_i]$ is $\bbomega^{a+1}$-large${}^*(\theta)$. There exists some $i < k$ such that $[\min X_{i}, \max X_i] \cap S_m = \emptyset$, otherwise there would be some $H \in \Pi_{i < k} [\min X_i, \max X_i]$ such that $H \subseteq S_m$, contradicting the fact that $S_m$ is not $\bbomega^{a+1}$-large${}^*(\theta)$, so let $F_{n+1} = X_i$.

If $n+1 = 4m+2$ for some $m$: if $\min F_{4m+1} \leq |E_m|$, then keep $F_{4m+2} = F_{4m+1}$. If $\min F_{4m+1} > |E_m|$, then, let $c$ non-standard be such that $F_{4m+1}$ is $\bbomega^{a \cdot c}$-large${}^*(\theta)$ and let $X_0 < \dots < X_{\min F_{4m+1} -1}$ be $\bbomega^{a \cdot c - 1 }$-large${}^*(\theta)$ subsets of $F_{4m+1}$. By the finite pigeonhole principle, there exists a $j < \min F_{4m+1}$ such that $E_i \cap [\min X_j, \max X_j] = \emptyset$, in this case, take $F_{4m+2} = X_j$ for such a $j$.

If $n+1 = 4m+3$ for some $m$: Let $c$ non-standard be such that $F_n$ is $\bbomega^{a \cdot c}$-large${}^*(\theta)$. Thus, there exists some $\theta$-apart and $\bbomega^{a\cdot (c-1)}$-large${}^*(\theta)$ subsets $X_0 < X_1$ of $F_n$. Take $F_{4m+3} = X_1$, $F_{4m+3}$ is $\theta$-apart from $X_0$, hence $\theta$-apart from $[0, \max X_0]$, and therefore $\theta$-apart from $\{\min F_{4i+2} \}$.
\\

We first claim that $(I, \Cod(M/I)) \models \WKL_0 + \BSig_2 + T$.
The constraint $\min F_{4i} > \min F_{4i-1}$ of (2) ensures that $I$ is a cut and that every $F_i \cap I$ is cofinal in $I$. By (4), the cut will be semi-regular, which, using \cite[Theorem 7]{Kirby1977InitialSO} implies that $(I, \Cod(M/I)) \models \WKL_0$.
By Paris–Kirby~\cite{paris1978collection} or Adamowicz–Clote–Wilkie~\cite{clote1985partition}, since $M \models \ISig_1$, then $\Cod(M/I) \models \BSig_2$.
The constraint $(5)$ ensures that $(I, \Cod(M/I)) \models T$. Indeed, for every $x \in I$, there exists some $i \in \NN$ such that $x < \min F_{4i+2}$. Since $\{\min F_{4i+2}\}$ is $\theta$-apart from $F_{4i+3}$, there exists some $y < \min F_{4i+3}$ such that $\theta(x,y,z)$ holds for every $z < \max F_{4i+3}$. By construction of $I$, we have $y \in I$ and $\max F_{4i+3} > I$, thus $(I, \Cod(M/I)) \models T$.

We now claim that $\Jcut_\theta^*(\M) = \sup \{ a \cdot k : k \in \omega \}$.
By definition of $I$, $F_{4i} \cap I$ is an element of $\Cod(M/I)$ that is cofinal in $I$ and which is not $\bbomega^{a \cdot c_i}$-large${}^*(\theta)$ by~(2). For every $d >\sup \{ a \cdot k : k \in \omega \}$, there exists a non-standard $c$ such that $a \cdot c < d$ (by overflow). Then for $i \in \omega$ such that $c = c_i$, the set $F_{4i} \cap I$ does not contain any $\bbomega^{d}$-large${}^*(\theta)$ $I$-finite subset. Therefore, $\Jcut^*_\theta(\M) \subseteq \sup \{ a \cdot k : k \in \omega \}$.

For the reverse inclusion, it is sufficient to show that $a \in \Jcut^*_\theta(\M)$ (as $\Jcut^*_\theta(\M)$ is closed by summation by \Cref{prop:new-cut-is-additive}). Take any $U \in \Cod(M/I)$ cofinal in $I$, there exists some $M$-finite set $U'$ such that $U = U' \cap I$. $U'$ is $\bbomega^{a+1}$-large${}^*(\theta)$, otherwise $U'$ would be one of the $S_i$, and $U$ would not be cofinal in $I$ (as $[\min F_{4i+1}, \max F_{4i+1}] \cap S_i = \emptyset$ by (3)). Let $V$ be the smallest initial segment of $U'$ forming an $\bbomega^{a}$-large${}^*(\theta)$ set, then $V$ is not $\bbomega^{a+1}$-large${}^*(\theta)$ and therefore $V = S_j$ for some $j \in \omega$, hence $V \cap I$ is not cofinal in $I$. $U$ and $V$ are both initial segments of $U'$, and $U$ is not included in $V$ as $U$ is cofinal in $I$ unlike $V$, so $V$ is included in $U$ and therefore in $I$. So, every infinite set in $(I, \Cod(M/I))$ contains an $\bbomega^a$-large${}^*(\theta)$ $I$-finite subset, hence $a \in \Jcut^*_\theta(\M)$. 
\end{proof}

Let $\mathsf{I}^0_2$ be the intersection of all $\Sigma^0_2$ cuts, or equivalently of all $\Pi^0_2$ cuts.
It is straightforward to prove over $\RCA_0$ that $\mathsf{I}^0_2 \subseteq \Jcut$. Moreover, it can be shown that for every model $\M = (M, S) \models \RCA_0 + \BSig_2 + \neg \ISig_2$, every $A \in S$ such that $\ISig_2(A)$ fails, and~$x \in \Jcut(\M) \setminus \mathsf{I}^0_2(\M)$, there exists an $M$-unbounded set $Y \subseteq M$ such that $\M \models (Y \oplus A)' \leq_T A'$ and $Y$ contains no $\bbomega^x$-large subset; a proof will appear in a forthcoming paper by Ikari, Kołodziejczyk, and Yokoyama (personal communication). It follows that every countable model of $\RCA_0 + \BSig_2 + \neg \ISig_2$ can be $\bbomega$-extended into a model of $\RCA_0 + \BSig_2 + \neg \ISig_2 + $\qt{$\mathsf{I}^0_2 = \Jcut$}. Since $\RCA_0 + \ISig_2 \vdash \mathsf{I}^0_2 = \NN$, then $\RCA_0 + \BSig_2 + $\qt{$\mathsf{I}^0_2 = \Jcut$} is $\Pi^1_1$-conservative over $\RCA_0 + \BSig_2$.

Since every $\bbomega^n$-large${}^*(\theta)$ set is $\bbomega^n$-large$(\theta)$, which is itself $\bbomega^n$-large, then $\RCA_0 \vdash \Jcut^*_\theta \subseteq \Jcut_\theta \subseteq \Jcut$. We now prove that if one further assumes $\BSig_2 + T$, then $\mathsf{I}^0_2 \subseteq \Jcut^*_\theta$. Interestingly, the argument requires to first prove that $\mathsf{I}^0_2 \subseteq \Jcut_\theta$.

\begin{lemma}\label[lemma]{lem:i02-in-jcut-theta}
$\RCA_0 + \BSig_2 + T \vdash \mathsf{I}^0_2 \subseteq \Jcut_\theta$.
\end{lemma}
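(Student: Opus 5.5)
Since $\mathsf{I}^0_2$ is the intersection of all $\Sigma^0_2$ cuts, it suffices to show that $\Jcut_\theta$ contains a $\Sigma^0_2$-definable cut, or equivalently that the complement of $\Jcut_\theta$ cannot start inside $\mathsf{I}^0_2$. The plan is to argue by contradiction, parametrized by an infinite set. Suppose $n \in \mathsf{I}^0_2 \setminus \Jcut_\theta$. Then there is an infinite set $X$ with no $\bbomega^n$-large$(\theta)$ finite subset. Define
$$I_X = \{ m \in \NN : \text{every infinite } Y \subseteq X \text{ has an } \bbomega^m\text{-large}(\theta) \text{ subset} \}.$$
This set is $\Pi^1_1$ in general, so it is not yet a $\Sigma^0_2$ cut. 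I will instead use the \qt{tail} version
$$I'_X = \{ m : (\forall b)(\exists F \finsub X \cap [b,\infty))\, F \text{ is } \bbomega^m\text{-large}(\theta) \},$$
which is $\Pi^0_2$ in $X$ (the inner condition is $\Delta^0_0$ in the code of $F$). Since $\mathsf{I}^0_2$ is also the intersection of all $\Pi^0_2$ cuts (with parameters), it suffices to check that $I'_X$ is a cut and that $n \notin I'_X$.

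For the second point, $n \notin I'_X$ is immediate, since $X$ has no $\bbomega^n$-large$(\theta)$ subset at all. For the first point, $0 \in I'_X$ because $X$ is infinite, and $I'_X$ is downward closed because $\bbomega^{m+1}$-large$(\theta)$ sets contain $\bbomega^m$-large$(\theta)$ subsets. The substantive step is closure under successor. Given $m \in I'_X$ and a bound $b$, I will build $\bbomega^m$-large$(\theta)$ sets $E_0 < E_1 < \cdots$ inside $X \cap [b,\infty)$ by the greedy argument of \Cref{lem:infinite-pairwise-apart-large}. Each new set $E_k$ must be $\theta$-apart from $E_0 \cup \dots \cup E_{k-1}$, and its existence uses $T$ and $\BSig_2$ to find a threshold $b_k$ beyond which apartness holds, together with $m \in I'_X$ applied at $b_k$. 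I only need $\min E_0$ many steps after first choosing $E_0$ with $\min E_0 \ge$ some element, actually $x_0 = \min$ of the union. To be precise, take a point $x \in X$ with $x \geq b$, then pick $x$ many pairwise $\theta$-apart $\bbomega^m$-large$(\theta)$ sets above $x$ by finitely many greedy steps. Their union with $\{x\}$ is $\bbomega^{m+1}$-large$(\theta)$ and lies in $X \cap [b,\infty)$. Pairwise apartness among them follows because each is apart from the union of the earlier ones.

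The main obstacle is making the finitely many greedy steps legitimate. The step \qt{there exists $b_k$ beyond which everything is $\theta$-apart from $E_0\cup\dots\cup E_{k-1}$} is a $\Sigma^0_2$ statement, and iterating it $x$ times needs $\BSig_2$ (to bound the witnesses $y$ for all $z$ below $\max$ of the union) together with an induction of length $x$ on a $\Sigma^0_2$-ish formula. To avoid needing $\ISig_2$, I will phrase the claim as $\Pi^0_2$ induction over $k \le x$: \qt{there is a sequence of $k$ such sets and for every bound there is a continuation}. If this becomes awkward, I will instead fold the iteration into the cut itself, replacing successor closure by the statement \qt{$I'_X$ is closed under $m \mapsto m+1$}, which uses just one round of $T + \BSig_2$ followed by repeated existence. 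Since $\Pi^0_2$ cuts in $\mathsf{I}^0_2$'s definition only need to be cuts, closure under successor suffices and no additivity is required. Once $I'_X$ is a $\Pi^0_2$ cut not containing $n$, we get $n \notin \mathsf{I}^0_2$, which is a contradiction.
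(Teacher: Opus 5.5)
Your construction is the paper's: the same tail cut $I'_X$, which is $\Pi^0_2$ in $X$ and omits $n$. Successor closure goes through $\{x\}\cup E_0\cup\dots\cup E_{x-1}$, where $x\in X\cap[b,\infty)$ and the $E_i$ are $x$ pairwise $\theta$-apart $\bbomega^m$-large$(\theta)$ subsets of $X$ above $x$. The problem is the step you single out as the main obstacle, because the justification you propose does not work in $\RCA_0+\BSig_2+T$. Induction for $\Pi^0_2$ formulas, even only along $k\le x$ for arbitrary $x$, is equivalent to $\ISig_2$, and $\RCA_0+\BSig_2$ does not prove that. Your fallback is circular, since \qt{closed under $m\mapsto m+1$} is just successor closure again. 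Moreover, a single application of $T+\BSig_2$ cannot supply one threshold that works for all later sets, because the threshold for $E_k$ depends on $\max(E_0\cup\dots\cup E_{k-1})$.

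What you are missing is that nothing beyond $\ISig_1$ is needed. Let $\varphi(k)$ say \qt{there are $x<E_0<\dots<E_{k-1}$, subsets of $X$, each $\bbomega^m$-large$(\theta)$ and pairwise $\theta$-apart}. This formula is $\Sigma^0_1$ in $X$. For the step $\varphi(k)\to\varphi(k+1)$, let $c=\max(\{x\}\cup E_0\cup\dots\cup E_{k-1})$. By $T$ and $\BSig_2$ there is $b_k$ with $\forall u<c\,\exists y<b_k\,\forall z\,\theta(u,y,z)$. Since $m\in I'_X$, there is an $\bbomega^m$-large$(\theta)$ set $E_k\subseteq X\cap(\max(b_k,c),\infty)$, and it is $\theta$-apart from every earlier $E_j$. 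Here $T$, $\BSig_2$ and the $\Pi^0_2$ fact $m\in I'_X$ are fixed hypotheses used to prove the inductive step; they are not part of the induction formula. So $\ISig_1$ yields $\varphi(x)$. This is exactly why the greedy search in \Cref{lem:infinite-pairwise-apart-large} is total, and it is how the paper handles the step. With this replacement your argument coincides with the paper's.
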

\begin{proof}
Suppose that $a \not \in \Jcut_\theta$.
Let $X$ be an infinite set with no $\bbomega^a$-large$(\theta)$ subset.
Let $I \subseteq \NN$ be the $\Pi^0_2(X)$ set of all $n \in \NN$ such that for every~$k \in \NN$,
$X \cap (k, \infty)$ contains an $\bbomega^n$-large$(\theta)$ subset. In particular, $a \not \in I$.

We prove that $I$ is a cut. First, $I$ is an initial segment of~$\NN$ as every $\bbomega^n$-large$(\theta)$ set contains $\bbomega^m$-large$(\theta)$ subsets for every~$m < n$. Moreover, $0 \in I$ since for every $k \in \NN$, $X \cap (k, \infty)$ is non-empty, hence contains an $\bbomega^0$-large$(\theta)$ set. Suppose $n \in I$. Let $k \in \NN$ and $x \in X \cap (k, \infty)$. Since $X \cap (x, \infty)$ is infinite, by the same argument as \Cref{lem:infinite-pairwise-apart-large}, there is an infinite sequence $E_0 < E_1 < \dots$ of pairwise $\theta$-apart $\bbomega^n$-large$(\theta)$ subsets of $X \cap (x, \infty)$.
Then $\{x\} \cup \bigcup_{i < x} E_i$ is an $\bbomega^{n+1}$-large$(\theta)$ subset of $X \cap (k, \infty)$. It follows that $n+1 \in I$.

Since $I$ is a $\Pi^0_2(X)$ cut, then $\mathsf{I}^0_2 \subseteq I$, so $a \not \in \mathsf{I}^0_2$.
\end{proof}

\begin{lemma}\label[lemma]{lem:large-set-of-large-sets-is-large}
    Let $\L \subseteq [\NN]^{<\NN}$ be a collection of sets and $E_0 < E_1 < \dots < E_{k-1}$ be a sequence of $\L$-large, pairwise $\theta$-apart sets such that $\{\max E_i : i < k\}$ is $\bbomega^{n+1}$-large$(\theta)$ for some $n$. Then for every $x < E_1$, $E_1 \cup \dots \cup E_{k-1} \in \L \cdot (x, \shortbrace{n}, x)$.
\end{lemma}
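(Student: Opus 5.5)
The plan is to proceed by induction on $n$, unfolding the definition of $\bbomega^{n+1}$-largeness$(\theta)$ of $W = \{\max E_i : i < k\}$ and transporting the resulting tree structure on $W$ to a tree structure on the $E_i$'s. The target is the characterisation of $\L \cdot (x, \shortbrace{n}, x)$ given by \Cref{lem:product-is-tree}. Concretely, I will assign to each node $\sigma \in x^{\leq n}$ a set of indices $I_\sigma \subseteq \{1, \dots, k-1\}$ and put $g(\sigma) = \bigcup_{i \in I_\sigma} E_i$. Each leaf will contain at least one $E_i$, so every $g(\sigma)$ is $\L$-large by closure of $\L$ under supersets. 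Conditions (a) and (b) of \Cref{lem:product-is-tree} will be arranged by making the index sets $I_\sigma$ consecutive blocks, ordered lexicographically.

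\textbf{Case $n = 0$.} Here we only need $E_1 \cup \dots \cup E_{k-1} \in \L$, i.e.\ $k \geq 2$. This holds because an $\bbomega^1$-large$(\theta)$ set with $\min \geq 3$ has at least two elements.

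\textbf{Inductive step.} I will prove a slightly stronger statement, quantified over $x \leq \min(W \setminus \{\min W\}) = \max E_1$; this covers every $x < E_1$. Removing $\min W = \max E_0$ from $W$, the definition of largeness$(\theta)$ gives pairwise $\theta$-apart $\bbomega^{n}$-large$(\theta)$ blocks $W_0 < W_1 < \dots$, one for each index below $\max E_0$. The difficulty is that $x$ may exceed $\max E_0$, so the top level may not supply $x$ blocks directly. To handle this I will descend into $W_0$, whose minimum is $\max E_1 \geq x$. Unfolding $W_0$ yields at least $\max E_1 \geq x$ pairwise $\theta$-apart $\bbomega^{n-1}$-large$(\theta)$ blocks. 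More generally, every block at every level has minimum at least $\max E_1 \geq x$, so each unfolding produces at least $x$ children. I therefore expect to obtain the $x$-ary tree of depth $n$ inside the maxima: the root splits into $x$ children, and each child (an $\bbomega^{m}$-large$(\theta)$ set $V$ of maxima) is handled by applying the induction hypothesis to the $E_i$'s with $\max E_i \in V$, with $\min V$ playing the role of $\max E_0$. Care is needed to spend exactly one level of the exponent per tree level, so that the depth is $n$ when starting from $\bbomega^{n+1}$. This may require using \Cref{lem:largeness-to-products} on $W$ instead of unfolding by hand.

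\textbf{Main obstacle: the $\theta$-apartness condition (b).} Two sibling nodes correspond to $\theta$-apart maxima sets $A < B$, and the node sets are $\bigcup_{\max E_i \in A} E_i$ and $\bigcup_{\max E_i \in B} E_i$. Apartness of $A$ and $B$ only provides witnesses $y < \min B$, i.e.\ $y$ below the \emph{maximum} of the first $E$ in $B$, rather than below its minimum. Conversely, the pairwise apartness of the $E_i$ only controls $z$ up to the maximum of a single $E$, not of the whole union. My first attempt will combine both sources:
\begin{itemize}
\item[(1)] Apartness of the $E$'s handles the passage from the last $E$ indexed by $A$ to the first $E$ indexed by $B$.
\item[(2)] Apartness of $A$ and $B$, composed with (1) by transitivity, extends control of $z$ up to $\max B$.
\end{itemize}
If this composition fails to place $y$ below $\min$ of the first $E$ in $B$, the fallback is to index each leaf by the maxima shifted by one position. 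That is, the $E_i$ assigned to a leaf would be those whose \emph{predecessor}'s maximum lies in the block. Then the first element of $B$ is $\max E_{j-1} < \min E_j$, which yields $y$ below the minimum of the first set in the union. This costs one element per block and should be absorbed by discarding the minimum at each level, consistent with the hypothesis $x < E_1$ (rather than $x < E_0$).
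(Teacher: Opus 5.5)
There is a genuine gap: the inductive step is never carried out, and neither device you propose for it works as stated.

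\emph{Depth.} When $x > \max E_0$ you descend into $W_0$ and use its $\ge x$ sub-blocks as the children of the root. These sub-blocks are only $\bbomega^{n-1}$-large$(\theta)$, but the induction hypothesis needs the set of maxima to be $\bbomega^{n}$-large$(\theta)$; this holds even with your enlarged range $x \le \max E_1$. So ``apply the induction hypothesis to each child'' does not apply. \Cref{lem:largeness-to-products} on $W$ does not help either: its branching is $\min W = \max E_0$, the very quantity that may be below $x$.

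\emph{Apartness.} Your first attempt cannot succeed because of the $\forall u\,\exists y\,\forall z$ shape. A witness $y < \min E_{l_0}$ is only guaranteed for $z < \max E_{l_0}$, and a witness good for all $z < \max B$ is only guaranteed below $\min B = \max E_{l_0}$; transitivity does not merge the two. The literal ``shift by one'' fallback breaks at the other end. If $E_j$ is assigned to $B$ whenever $\max E_{j-1} \in B$, the union contains the set following the last maximum of $B$, whose maximum exceeds $\max B$. Block apartness then no longer covers the required ranges of $z$, nor of $u$ for the left-hand union; besides, blocks need not be intervals of $W$. The right device is to drop the first set of each block, i.e.\ to use $\bigcup\{E_l : \max E_l \in B \setminus \{\min B\}\}$. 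Its maximum is $\max B$ and its minimum exceeds $\min B$, so $\theta$-apartness of blocks transfers verbatim. This is exactly why the lemma concludes about $E_1 \cup \dots \cup E_{k-1}$: the output of the induction hypothesis is already such a dropped union.

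For comparison, the paper's inductive step takes $x < E_0$, hence $x < \max E_0$. One unfolding of $W$ then already yields $\max E_0 > x$ pairwise $\theta$-apart $\bbomega^{n}$-large$(\theta)$ blocks. The induction hypothesis is applied to each block's subsequence, and its outputs (the dropped unions) are pairwise $\theta$-apart as above. The paper phrases this with the full unions $Z_i$, but apartness is only guaranteed for the dropped ones. This case is all that \Cref{lem:i02-in-jcut-star-theta} needs, since there $x = y_0$ lies below every $E_i$ used.

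If you want the full range $x < E_1$, induct instead on a stronger claim: for $m \ge 1$, if the set of maxima is $\bbomega^{m}$-large$(\theta)$ and $x \le \max E_0$, then $E_1 \cup \dots \cup E_{k-1} \in \L \cdot (x, \shortbrace{m}, x)$. The base $m = 1$ is counting: there are at least $\max E_0 \ge x$ sets among $E_1, \dots, E_{k-1}$, pairwise apart by hypothesis. The step is one unfolding plus dropping, and each block $F_j$ has $\min F_j > \max E_0 \ge x$. Now apply this claim to the $E_l$ with $\max E_l \in W_0$. Here $W_0$ is $\bbomega^{n}$-large$(\theta)$ with $\min W_0 \ge \max E_1 > x$, and $\L \cdot (x, \shortbrace{n}, x)$ is closed under supersets, so this gives the lemma for $n \ge 1$. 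This is where your descent into $W_0$ pays off: it costs exactly the one exponent you were worried about.
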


\begin{proof}
By induction on $n$. The case $n = 0$ is immediate. 

Assume the property to holds for some $n \in \NN$ and let $E_0 < E_1 < \dots < E_{k-1}$ be a sequence of $\L$-large, $\theta$-apart sets such that $F := \{\max E_i : i < k\}$ is $\bbomega^{n+1}$-large$(\theta)$ for some $n$ and let $x < E_0$. Let $F_0, \dots, F_{\max E_0 - 1}$ be $\theta$-apart and $\bbomega^n$-large$(\theta)$ subsets of $F$. By our inductive hypothesis, for every $i < \max E_0$, the set $Z_i := \bigcup_{\max E_j \in F_i} E_j$ is in $\L \cdot (x, \shortbrace{n}, x)$. Furthermore, the $Z_i$ are pairwise $\theta$-apart, so $E_1 \cup \dots \cup E_{k-1} \in \L \cdot (x, \shortbrace{n+1}, x)$.
\end{proof}

\begin{lemma}\label[lemma]{lem:i02-in-jcut-star-theta}
$\RCA_0 + \BSig_2 + T \vdash \mathsf{I}^0_2 \subseteq \Jcut^*_\theta$.
\end{lemma}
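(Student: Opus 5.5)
We mimic the proof of \Cref{lem:i02-in-jcut-theta}, but with largeness${}^*(\theta)$ in place of largeness$(\theta)$. The difficulty in passing from $n$ to $n+1$ will be absorbed by \Cref{lem:large-set-of-large-sets-is-large}, using the fact (\Cref{lem:i02-in-jcut-theta}) that $\mathsf{I}^0_2 \subseteq \Jcut_\theta$.

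Suppose $a \not\in \Jcut^*_\theta$, witnessed by an infinite set $X$ with no $\bbomega^a$-large${}^*(\theta)$ subset. Let $I$ be the $\Pi^0_2(X)$ set of all $n$ such that for every $k$, $X \cap (k,\infty)$ contains an $\bbomega^n$-large${}^*(\theta)$ subset. Then $a \not\in I$, and $I$ is an initial segment containing $0$ (a subset of an $\bbomega^{n+1}$-large${}^*(\theta)$ set is $\bbomega^n$-large${}^*(\theta)$, since a leaf of its tree is). If we show that $I$ is closed under successor, then $\mathsf{I}^0_2 \subseteq I$ and hence $a \not\in \mathsf{I}^0_2$, as required.

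The main step is closure under successor. We may restrict to $n \in \mathsf{I}^0_2$, which suffices since we only need $\mathsf{I}^0_2 \subseteq I$. Concretely, we replace $I$ by $I \cap \mathsf{I}^0_2$, or more cleanly define $I' = \{ n \in I : n+1 \in \Jcut_\theta\}$. Because $\Jcut_\theta$ is $\forall\Pi^0_2$ rather than $\Pi^0_2(X)$, the cleanest choice is to intersect with a specific $\Pi^0_2$ cut.

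Fix $n \in I$ and $k$, and pick $x \in X \cap (k,\infty)$. By the argument of \Cref{lem:infinite-pairwise-apart-large}, applied to $X \cap (x,\infty)$ and the largeness notion $L^\theta_n$ (restricted to subsets of $X \cap (x, \infty)$, which is all that argument uses), we obtain an infinite sequence $E_0 < E_1 < \cdots$ of pairwise $\theta$-apart $\bbomega^n$-large${}^*(\theta)$ subsets. The set $\{\max E_i\}$ is infinite. We therefore need it to contain an $\bbomega^{n+2}$-large$(\theta)$ subset, and this is where $n+2 \in \Jcut_\theta$ is used, i.e.\ where \Cref{lem:i02-in-jcut-theta} enters.

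Take such a subset $F = \{\max E_{i_0} < \cdots\}$ and discard the unused $E_i$. Applying \Cref{lem:large-set-of-large-sets-is-large} with $\L = L^\theta_n$ and exponent $n+1$ (i.e.\ $F$ is $\bbomega^{(n+1)+1}$-large$(\theta)$), and noting that $x < E_{i_1}$, we get that $E_{i_1} \cup \cdots \in L^\theta_n \cdot (x, \shortbrace{n+1}, x)$. Hence $\{x\} \cup \bigcup_{j\ge1} E_{i_j}$ is $\bbomega^{n+1}$-large${}^*(\theta)$ and lies in $X \cap (k,\infty)$, so $n+1 \in I$.

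To organize the induction formally, let $C$ be any $\Pi^0_2$ cut; we show $\mathsf{I}^0_2 \subseteq I$ by considering $I \cup \{n : n+2 \not\in C\}$-style modifications. The simplest route is to note that $\mathsf{I}^0_2$ is itself closed under the needed operations. Define $I'' = \{n \in I : \forall m \le n\,(m+2 \in J)\}$, where $J$ is the $\Pi^0_2(X)$ cut from the proof of \Cref{lem:i02-in-jcut-theta} applied to the infinite set $\{\max E_i\}$. Since that set depends on $n$, we instead quantify uniformly: let $J$ be the set of $m$ such that every infinite $\Delta^0_1(X)$-definable sequence set contains an $\bbomega^m$-large$(\theta)$ set. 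The main obstacle is keeping the complexity at $\Pi^0_2(X)$ while handling this; I expect the resolution to be that in the successor step we only use that $n+2 \in \mathsf{I}^0_2 \subseteq \Jcut_\theta$ for $n \in \mathsf{I}^0_2$, which holds since $\mathsf{I}^0_2$ is a cut. It then suffices to check that $I \cap \mathsf{I}^0_2$ is an intersection of $\Pi^0_2$ cuts closed under successor, and to conclude by minimality of $\mathsf{I}^0_2$ via a $\Pi^0_2$-induction-style argument on $I$ relative to each $\Pi^0_2$ cut.
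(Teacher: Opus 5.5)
Your successor step matches the paper's: a greedy sequence of pairwise $\theta$-apart $\bbomega^n$-large${}^*(\theta)$ blocks in $X\cap(x,\infty)$, an $\bbomega^{n+2}$-large$(\theta)$ subset of their maxima, then \Cref{lem:large-set-of-large-sets-is-large} with exponent $n+1$ and $x$ prepended. The gap is the one you flag yourself. That step needs $n+2 \in \Jcut_\theta$ for \emph{every} $n \in I$. Only then is the $\Pi^0_2(X)$ set $I$ a cut, and minimality of $\mathsf{I}^0_2$ applies only to definable cuts.

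None of your candidate repairs secures this:
\begin{itemize}
\item $\{n \in I : n+1 \in \Jcut_\theta\}$ is not $\Pi^0_2(X)$, since it quantifies over all sets.
\item The set $J$ you try to use depends on $n$.
\item The claim that $I \cap \mathsf{I}^0_2$ is an intersection of $\Pi^0_2$ cuts closed under successor is unjustified. For an arbitrary $\Pi^0_2$ cut $C$ nothing gives $C \subseteq \Jcut_\theta$, so $I \cap C$ need not be closed under successor.
\end{itemize}
As written, the proposal ends with what you expect the resolution to be, not with an argument.

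The missing idea is a one-line case split on $a$. If $a \notin \Jcut_\theta$, \Cref{lem:i02-in-jcut-theta} already gives $a \notin \mathsf{I}^0_2$. If $a \in \Jcut_\theta$, then every $n \in I$ satisfies $n < a$, because $I$ is an initial segment omitting $a$. Hence $n \in \Jcut_\theta$, and since $\Jcut_\theta$ is a cut over $\RCA_0 + \BSig_2 + T$, also $n+2 \in \Jcut_\theta$. So $I$ itself, unmodified, is closed under successor, and is a $\Pi^0_2(X)$ cut omitting $a$.

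Your final idea can also be completed, but through comparability of initial segments rather than the justification you gave. If $\mathsf{I}^0_2 \not\subseteq I$, then $I \subsetneq \mathsf{I}^0_2$. So every $n \in I$ has $n+2 \in \mathsf{I}^0_2 \subseteq \Jcut_\theta$, and your step gives $n+1 \in I$. Then $I$ is a $\Pi^0_2(X)$ cut, which contradicts $I \subsetneq \mathsf{I}^0_2$.
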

\begin{proof}
Suppose that $a \not \in \Jcut^*_\theta$. If $a \not \in \Jcut_\theta$, then by \Cref{lem:i02-in-jcut-theta}, $a \not \in \mathsf{I}^0_2$. From now on, assume that $a \in \Jcut_\theta$. The proof is very similar to \Cref{lem:i02-in-jcut-theta}, but with a twist.

Let $X$ be an infinite set with no $\bbomega^a$-large${}^*(\theta)$ subset.
Let $I \subseteq \NN$ be the $\Pi^0_2(X)$ set of all $n \in \NN$ such that for every~$k \in \NN$,
$X \cap (k, \infty)$ contains an $\bbomega^n$-large${}^*(\theta)$ subset. 
As in \Cref{lem:i02-in-jcut-theta}, $I$ is an initial segment of~$\NN$ containing~0 but not~$a$.

Let us prove that $I$ is closed under successor, hence is a cut.
Suppose $n \in I$. Let $k \in \NN$ and $x \in X \cap (k, \infty)$. Since $X \cap (x, \infty)$ is infinite, by the same argument as \Cref{lem:infinite-pairwise-apart-large}, there is an infinite sequence $E_0 < E_1 < \dots$ of pairwise $\theta$-apart $\bbomega^n$-large${}^*(\theta)$ subsets of $X \cap (x, \infty)$.

Let $y_i = \max E_i$ and consider the set $Y = \{y_1, y_2, \dots\}$. Since $a \in \Jcut_\theta$ and $a > n$, then $n \in \Jcut_\theta$, hence $n+2 \in \Jcut_\theta$. Thus, as the set $Y$ is infinite, there exists some subset $F \subseteq Y$ that is $\bbomega^{n+2}$-large$(\theta)$. Let $E = \{y_0\} \cup \bigcup_{y_i \in F} E_i$. By \Cref{lem:large-set-of-large-sets-is-large}, $\bigcup_{y_i \in F} E_i$ is in $L_n^{\theta} \cdot (y_0, \shortbrace{n+1}, y_0)$, hence $E$ is an $\bbomega^{n+1}$-large${}^*(\theta)$ subset of $X \cap (k, \infty)$. It follows that $n+1 \in I$.

Since $I$ is a $\Pi^0_2(X)$ cut, then $\mathsf{I}^0_2 \subseteq I$, so $a \not \in \mathsf{I}^0_2$.
\end{proof}

It follows from the above discussion that $\RCA_0 + \BSig_2 + T + \qt{\mathsf{I}^0_2 = \Jcut^*_\theta = \Jcut_\theta = \Jcut}$ is $\Pi^1_1$-conservative over $\RCA_0 + \BSig_2 + T$.

\subsection{Sparsity}\label[section]{sec:sparsity}

In Ketonen and Solovay's notion of largeness, a set~$X$ is $\bbomega^{n+1}$-large iff $X \setminus \{\min X\}$ is $\bbomega^n \cdot \min X$-large.
From a purely logical viewpoint, one could have fixed a provably total, increasing function~$g$ and defined a set~$X$ to be $\bbomega^{n+1}$-$g$-large if $X \setminus \{\min X\}$ is $\bbomega^n \cdot g(\min X)$-large. The resulting notion of largeness would enjoy all the necessary properties to prove conservation theorems, and it is often very convenient to assume that an $\bbomega^{n+1}$-large set~$X$ contains much more than $\min X$ many pairwise disjoint $\bbomega^n$-large subsets. 

Thanks to sparsity, one can switch from one definition to the other with only a multiplicative cost. For simplicity, consider for example an $\bbomega \cdot 2$-large and $g$-sparse set~$X$. Then there exist two pairwise disjoint $\bbomega$-large sets $X_0 < X_1 \subseteq X$. In particular, $g(\max X_0) < \min X_1$. By definition of $\bbomega$-largeness, $X_1 \setminus \{\min X_1\}$ is $\min X_1$-large, hence $g(\max X_0)$-large. It follows that $\{\max X_0\} \cup X_1 \setminus \{\min X_1\}$ (hence $X$) is $\bbomega$-$g$-large. Over the remainder of this article, we shall use multiple times the same idea to artificially increase the number of branches of the induced tree in the definition of an $\bbomega^{n+1}$-large${}^*(\theta)$ block.
Since we work within $\RCA_0$, the provably total computable functions are the primitive recursive ones.

\begin{theorem}[Ketonen and Solovay~\cite{ketonen1981rapidly}]\label[theorem]{thm:large-dominates-pr}
$\ISig_1$ proves that for every primitive recursive function $g : \NN \to \NN$, there is some~$n \in \NN$
such that for every~$\bbomega^n$-large set~$F$, $\max F > g(\min F)$.
\end{theorem}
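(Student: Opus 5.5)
The plan is to establish the bound in $\ISig_1$ by induction on the primitive recursive definition of $g$, following the classical Ketonen--Solovay approach. Define the Hardy-style functions $h_n$ by: $h_0(x) = x+1$, and $h_{n+1}(x)$ is the $x$-fold iterate of $h_n$ starting at $x+1$ (roughly). The first step is to show, by $\Sigma_1$-induction on $n$, the following claim: for every $\bbomega^n$-large set $F$, $\max F \geq h_n(\min F)$, or a comparable relation. For $n=0$ this is trivial. For $n+1$, write $F \setminus \{\min F\}$ as $\min F$ consecutive $\bbomega^n$-large blocks $F_0 < \dots < F_{\min F - 1}$. Applying the induction hypothesis to each block gives $\max F_i \geq h_n(\min F_i) \geq h_n(\max F_{i-1})$. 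Chaining these inequalities, and using the monotonicity of $h_n$, yields $\max F \geq h_n^{(\min F)}(\min F)$, which is $h_{n+1}(\min F)$. The statement being inducted on is $\Pi_1$ in $n$, since it quantifies over all finite $F$. This is fine in $\ISig_1$, because $\Pi_1$-induction is available there.

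The second step is the standard fact that every primitive recursive function is eventually dominated by some $h_n$ for a standard $n$. One proves this by meta-induction on the construction of $g$ from the initial functions via composition and primitive recursion. Composition is handled by $h_{a}\circ h_{b} \leq h_{\max(a,b)+1}$ for large enough arguments. Primitive recursion is handled by bounding the iteration through one more level of the hierarchy. Each of these inequalities is a $\Pi_1$ statement about the total functions $h_n$ for fixed standard $n$, and $\ISig_1$ proves them by $\Sigma_1$/$\Pi_1$-induction on the argument. It also proves that the $h_n$ are total and increasing, since each $h_n$ for fixed standard $n$ is primitive recursive.

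To finish, one must deal with domination holding only eventually, that is, for $x \geq c$. Choose $n$ so that $h_n(x) > g(x)$ for all $x \geq c$. Then, for the given $F$, either $\min F \geq c$ and we are done, or $\min F < c$. In the latter case we handle the small values by taking $n$ larger, absorbing the finite exceptions, for instance by using $h_{n'}(x) \geq h_n(x + c)$ for a bigger $n'$. Alternatively, we can recall the standing assumption $\min F \geq 3$ and shift indices.

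I expect the main obstacle to be the bookkeeping in the domination step for primitive recursion. We must verify that the iterate bound $h_n^{(m)}(x) \leq h_{n+1}(\max(m,x))$ is provable in $\ISig_1$ with the correct quantifier complexity. In practice I would cite Ketonen and Solovay for this part, since the theorem is attributed to them.
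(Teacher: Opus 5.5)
The paper gives no proof of this theorem. It quotes it from Ketonen and Solovay and uses it as a black box. Your outline is the classical argument behind it: compare $\bbomega^n$-largeness with Hardy-type functions, then dominate each primitive recursive function by one of them. The strategy is sound, but three details need fixing.

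\textbf{The invariant.} With $h_0(x)=x+1$ and $h_{n+1}(x)=h_n^{(x)}(x+1)$, the invariant that actually propagates is $h_n(\min F)\le \max F+1$. Your version, $\max F\ge h_n(\min F)$, already fails at $n=0$ for singletons, so the base case is not ``trivial'' as stated. With the corrected invariant your chaining works verbatim: put $y_{-1}=\min F+1$ and $y_i=\max F_i+1$, so that $y_i\ge h_n(y_{i-1})$. At the end, apply the domination step to $g+1$ rather than to $g$.

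\textbf{The case $\min F=0$.} The hypothesis $\min F\ge 1$ (covered by the paper's standing convention $\min F\ge 3$) is necessary, not a convenience. The set $\{0\}$ is $\bbomega^n$-large for every $n\ge 1$, because $\bbomega^{n-1}\cdot 0$-largeness holds vacuously. Correspondingly $h_n(0)=1$ for all $n$, so ``taking $n$ larger'' can never handle $\min F=0$. For $1\le \min F<c$ there are only finitely many standard values. Take $n'\ge n$ with $h_{n'}(1)>\max_{x<c}g(x)+1$, and use that $h_{n'}$ is increasing and $h_{n'}\ge h_n$ on $[1,\infty)$. Your suggested inequality $h_{n'}(x)\ge h_n(x+c)$ only helps after you replace $g$ by its monotone envelope $x\mapsto\max_{y\le x}g(y)$.

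\textbf{The induction on $n$.} A formal $\Pi_1$-induction on $n$ requires ``$h_n(x)\le y$'' to be expressed by a bounded formula. This is possible because the computation of $h_n(x)$ is bounded elementarily in its value, and exponentiation is total in $\ISig_1$; otherwise the induction formula is $\Pi_2$. You do not actually need this induction. For a fixed $g$ only one standard $n$ is required, so an external induction on $n$ suffices, producing a separate $\ISig_1$-proof for each level, with each $h_n$ a fixed primitive recursive function.
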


In particular, $y > 2x$ whenever $(x, y]$ is $\bbomega$-large and $y > x 2^x$ whenever $(x, y]$ is $\bbomega^2$-large.
Recall that a set $X$ is \emph{$\bbomega^n$-sparse} if for every~$x, y \in X$ with $x < y$, then the interval $(x, y]$ is $\bbomega^n$-large.
Note that one could have worked with $\bbomega^n$-sparsity${}^*(\theta)$ instead of $\bbomega^n$-sparsity, but the later notion is sufficient and better understood, so in the remainder of this article, we shall consider only $\bbomega^n$-sparsity. 

\begin{corollary}[$\ISig_1$]\label[corollary]{cor:new-large-omega-k-sparse}
For every~$k, n \in \NN$, every $\bbomega^{n+k} \cdot 2$-large${}^*(\theta)$ set contains an $\bbomega^k$-sparse $\bbomega^n$-large${}^*(\theta)$ subset.
\end{corollary}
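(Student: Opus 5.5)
The plan is to apply the Deconstruction proposition (\Cref{prop:split-new-largeness}) with the roles $n$ and $m := k$, and then pick one point from each block so that consecutive chosen points are far apart. Concretely, let $X$ be $\bbomega^{n+k}\cdot 2$-large${}^*(\theta)$. We apply \Cref{prop:split-new-largeness} with the parameters swapped: take the block exponent to be $k$ and the "product" exponent to be $n$. This yields pairwise $\theta$-apart $\bbomega^k$-large${}^*(\theta)$ subsets $X_0 < \dots < X_{\ell-1}$ of $X$ such that every $H \in \prod_{i<\ell}[\min X_i, \max X_i]$ is $\bbomega^n$-large${}^*(\theta)$.

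Now set $H = \{\max X_i : i < \ell\}$. It belongs to the product, since $\max X_i \in [\min X_i,\max X_i]$, so $H$ is $\bbomega^n$-large${}^*(\theta)$. It is also a subset of $X$.

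It remains to check $\bbomega^k$-sparsity. Let $i < j$. The interval $(\max X_i, \max X_j]$ contains $X_{i+1}$, because $X_i < X_{i+1} \le X_j$ gives $X_{i+1} \subseteq (\max X_i, \max X_{i+1}] \subseteq (\max X_i, \max X_j]$. The set $X_{i+1}$ is $\bbomega^k$-large${}^*(\theta)$, hence $\bbomega^k$-large$(\theta)$, hence $\bbomega^k$-large in the sense of Ketonen and Solovay, as remarked after the definition of largeness${}^*(\theta)$. By closure under supersets, $(\max X_i,\max X_j]$ is $\bbomega^k$-large. Therefore $H$ is $\bbomega^k$-sparse.

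The only delicate point is the bookkeeping in the parameter swap. \Cref{prop:split-new-largeness} is stated for $\bbomega^{n+m}\cdot 2$ with blocks of exponent $n$; since addition is commutative, $\bbomega^{n+k} = \bbomega^{k+n}$, and we instantiate it with blocks of exponent $k$ and product exponent $n$. Choosing the maxima, rather than arbitrary points of the blocks, is what guarantees that a whole block lies between consecutive chosen points. Everything else is immediate, and the argument goes through in $\ISig_1$ because the deconstruction result does.
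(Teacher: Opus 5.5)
Your proposal is correct and matches the paper's proof. Both apply \Cref{prop:split-new-largeness} with blocks of exponent $k$ and product exponent $n$, take $\{\max X_i : i<\ell\}$ as the $\bbomega^n$-large${}^*(\theta)$ subset, and obtain $\bbomega^k$-sparsity because a whole $\bbomega^k$-large${}^*(\theta)$ block lies inside $(\max X_i,\max X_j]$. The only cosmetic difference is that you use the block $X_{i+1}$, whereas the paper uses $X_j$ itself (noting $\max X_i<\min X_j$); either choice works.
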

\begin{proof}
Let $X$ be $\bbomega^{n+k} \cdot 2$-large${}^*(\theta)$.
By \Cref{prop:split-new-largeness}, there are some $\ell \in \NN$ and some $\bbomega^k$-large${}^*(\theta)$ pairwise $\theta$-apart subsets $X_0 < \dots < X_{\ell-1}$ of~$X$ such that every $H \in \prod_{i < \ell} [\min X_i, \max X_i]$ is $\bbomega^n$-large${}^*(\theta)$.
In particular, the set $Y = \{ \max X_i : i < \ell \}$ is $\bbomega^n$-large${}^*(\theta)$.
Let $a, b \in Y$ with $a < b$. Let $i$ be such that $\max X_i = b$. Then $a < \min X_i$, and $X_i$ is $\bbomega^k$-large${}^*(\theta)$, so $(a, b]$ is $\bbomega^k$-large${}^*(\theta)$ hence $\bbomega^k$-large. It follows that $Y$ is $\bbomega^k$-sparse.
\end{proof}

\begin{corollary}[$\ISig_1$]\label[corollary]{cor:new-large-g-sparse}
For every primitive recursive function $g : \NN \to \NN$, there is some~$k \in \NN$ such that for every~$n \in \NN$,
every $\bbomega^{n+k}$-large${}^*(\theta)$ set contains a $g$-sparse $\bbomega^n$-large${}^*(\theta)$ subset.
\end{corollary}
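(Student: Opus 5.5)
The plan is to combine \Cref{thm:large-dominates-pr} with \Cref{cor:new-large-omega-k-sparse}. Given a primitive recursive $g$, we may assume without loss of generality that $g$ is increasing, replacing it by $x \mapsto \max_{y \leq x} g(y) + x$. This is still primitive recursive, and sparsity for the larger function implies sparsity for $g$.

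\Cref{thm:large-dominates-pr} then provides some $k_0$ such that every $\bbomega^{k_0}$-large set $F$ satisfies $\max F > g(\min F)$. The first step is to check that $\bbomega^{k_0}$-sparsity implies $g$-sparsity. Let $x < y$ lie in an $\bbomega^{k_0}$-sparse set, so that $(x,y]$ is $\bbomega^{k_0}$-large. Then $y > g(\min (x,y]) = g(x+1) \geq g(x)$ by monotonicity, which is exactly the $g$-sparsity condition.

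The second step is to absorb the factor $2$ appearing in \Cref{cor:new-large-omega-k-sparse}. I would set $k = k_0 + 1$ and claim that every $\bbomega^{n+k_0+1}$-large${}^*(\theta)$ set is $\bbomega^{n+k_0} \cdot 2$-large${}^*(\theta)$. To see this, unfold the definition. If $X$ is $\bbomega^{m+1}$-large${}^*(\theta)$ with $m = n + k_0$, then $X \setminus \{\min X\}$ lies in $L^\theta_m \cdot (\min X, \shortbrace{m+1}, \min X)$. By \Cref{lem:product-is-tree}, take the first two children of the root of the tree, with $\min X \geq 3 > 2$. Each child contains a leaf, which is an $\bbomega^m$-large${}^*(\theta)$ set, and by condition (b) the two leaves are ordered and $\theta$-apart. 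Hence $X$ is $\bbomega^m \cdot 2$-large${}^*(\theta)$. Applying \Cref{cor:new-large-omega-k-sparse} with parameters $n$ and $k_0$ then yields an $\bbomega^{k_0}$-sparse, hence $g$-sparse, $\bbomega^n$-large${}^*(\theta)$ subset.

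The only delicate point is uniformity. The constant $k$ must depend on $g$ alone and not on $n$. This holds because $k_0$ comes from \Cref{thm:large-dominates-pr}, which is applied once per $g$, and the rest of the argument is uniform in $n$. Everything goes through in $\ISig_1$, since the cited results are stated over that theory.
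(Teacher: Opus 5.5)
Your proof is correct and is essentially the paper's argument, which simply cites \Cref{cor:new-large-omega-k-sparse} together with \Cref{thm:large-dominates-pr}. You also make explicit two details the paper leaves implicit: replacing $g$ by an increasing majorant so that $\bbomega^{k_0}$-sparsity yields $g$-sparsity, and taking $k = k_0+1$ so that an $\bbomega^{n+k_0+1}$-large${}^*(\theta)$ set is $\bbomega^{n+k_0}\cdot 2$-large${}^*(\theta)$, using two $\theta$-apart leaves of the tree from \Cref{lem:product-is-tree}.
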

\begin{proof}
Immediate by \Cref{cor:new-large-omega-k-sparse} and \Cref{thm:large-dominates-pr}.
\end{proof}

\subsection{Pigeonhole principle}\label[section]{sec:new-largeness-rt1}

We now turn to the proof of a central proposition which justifies the design of this new notion of largeness${}^*(\theta)$.
\Cref{prop:largeness-partition} gives an upper bound to the pigeonhole principle in its most general form, from which we deduce the $\bbomega^{n+2}$-large${}^*(\theta)$ upper bound to $\RT^1$-$\bbomega^n$-largeness${}^*(\theta)$ in \Cref{cor:largeness-partition-verysimple}.

\begin{proposition}\label[proposition]{prop:largeness-partition}
Let $X$ be a finite set, $n, s, k_1, \dots, k_s \in \NN$ and $a \leq \min X$.
If $X$ is $\bbomega^{n+1} \cdot (a, a k_1, \dots, a k_s)$-large${}^*(\theta)$ and exp-sparse, and $f : X \to a$ is a coloring, then there is an $f$-homogeneous $\bbomega^n \cdot (k_1, \dots, k_s)$-large${}^*(\theta)$ subset.
\end{proposition}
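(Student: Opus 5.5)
We argue by induction on $s$, with the case $s=0$ doing the main work; the inductive step is a tree-level version of the same argument.

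\textbf{Base case $s=0$.} The set $X$ is $\bbomega^{n+1}\cdot a$-large${}^*(\theta)$, so it contains $a$ pairwise $\theta$-apart $\bbomega^{n+1}$-large${}^*(\theta)$ blocks $B_0<\dots<B_{a-1}$. Unfolding the definition, each $B_j\setminus\{\min B_j\}$ is $\bbomega^n\cdot(\min B_j,\shortbrace{n+1},\min B_j)$-large${}^*(\theta)$. By exp-sparsity, for $j\geq 1$ we have $\min B_j > 4^{\max B_{j-1}}\geq 4^{a}$, so the branching of each such tree vastly exceeds $a$.

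We then run a pigeonhole argument along the blocks, in the style of \cite[Lemma 2.2]{kolo2020some}. For each block we ask whether some colour $i<a$ has an $\bbomega^n$-large${}^*(\theta)$ homogeneous subset inside it. If in $B_j$ every colour fails, then every leaf of the tree of $B_j$ (an $\bbomega^n$-large${}^*(\theta)$ set) is non-homogeneous. The plan is to show that a colour failing on $B_j$ can be ``removed'', so that at least one colour is eliminated per block. After $a$ blocks this is impossible, since some colour must remain available.

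To make the elimination precise, we prove the following stronger claim by induction on $n$. If $Y$ is $\bbomega^{n}\cdot(m,\shortbrace{d},m)$-large${}^*(\theta)$ with $m\geq 2^{c}$ and $d$ large relative to the number $c$ of colours, then some colour class contains an $\bbomega^n$-large${}^*(\theta)$ subset, or at least there is a subtree of smaller arity all of whose leaves avoid one colour. The last assertion comes from a finite pigeonhole on the $m$ children of each node, using the $\theta$-apartness recorded by \Cref{lem:product-is-tree}. Subsets of $\theta$-apart sets stay $\theta$-apart, so homogeneous pieces taken from distinct leaves can be reassembled into tree-shaped large sets.

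\textbf{Inductive step $s\to s+1$.} We view $X$ as $(\bbomega^{n+1}\cdot(a,ak_1,\dots,ak_s))\cdot ak_{s+1}$-large, that is, as $ak_{s+1}$ pairwise $\theta$-apart pieces. Applying the induction hypothesis to each piece yields a homogeneous $\bbomega^n\cdot(k_1,\dots,k_s)$-large${}^*(\theta)$ set with some colour $c_j<a$. By finite pigeonhole, one colour appears for at least $k_{s+1}$ indices $j$. These $k_{s+1}$ pieces are pairwise $\theta$-apart, so their union is $\bbomega^n\cdot(k_1,\dots,k_{s+1})$-large${}^*(\theta)$ and homogeneous.

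\textbf{Main obstacle.} The hard step is the base case: we must show that a colour which fails to produce an $\bbomega^n$-large${}^*(\theta)$ homogeneous set in a block can be discarded, while keeping exactly the $n+1$ levels of branching needed to rebuild $\bbomega^{n}$-largeness for the next level down. The obstruction is that $\theta$-apartness is lost under unions. The remedy is to use the depth-$(n+1)$ tree structure, which permits a bounded number of unions, together with the exp-sparse surplus of branching ($\min B_j\gg a$) to perform the pigeonhole splits at each level. The first approach I would try is an induction on $n$ in which one tree level is consumed per step of the recursion.
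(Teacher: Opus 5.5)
There is a genuine gap. Your inductive step in $s$ is correct and coincides with the paper's Case 3. The base case $s=0$, which you rightly call the heart of the matter, is not actually proved, and the mechanism you sketch does not work.

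It is never made precise what ``removing'' a colour means, and the natural reading fails. Knowing that colour $i$ has no $\bbomega^n$-large${}^*(\theta)$ homogeneous subset inside $B_j$ says nothing about colour $i$ in later blocks. The second disjunct of your auxiliary claim (a subtree whose leaves avoid one colour) cannot be iterated to a conclusion either: every leaf may be non-homogeneous. So the homogeneous set must straddle several pieces, and asking block by block whether a block contains a large homogeneous set is the wrong question.

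The order of induction is also off. The case $s=0$ at level $n$ needs the proposition at level $n-1$ in its tree-shaped form with $s=n$. That is precisely why the statement is formulated for arbitrary $(a,ak_1,\dots,ak_s)$. The paper inducts on $n$ outermost and on $s$ inside; the ``stronger claim'' you are looking for is the proposition itself.

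The missing idea is the Ko{\l}odziejczyk--Yokoyama trick of taking the minimum from an earlier block.
\begin{enumerate}[(1)]
\item Take the $a$ pairwise $\theta$-apart $\bbomega^{n+1}$-large${}^*(\theta)$ blocks $X_0<\dots<X_{a-1}$, and assume $X_0$ is not homogeneous (otherwise you are done).
\item Since $|f(X_0)|\geq 2$ and $f$ takes at most $a$ values, not every later block can contribute a new colour. Hence some $t\geq 1$ satisfies $f(X_t)\subseteq f(\bigcup_{j<t}X_j)$.
\item Exp-sparsity gives $\min X_t>a\cdot\max X_{t-1}$. So $X_t\setminus\{\min X_t\}$ is $\bbomega^{n}\cdot(a,a\max X_{t-1},\dots,a\max X_{t-1})$-large${}^*(\theta)$, with $n$ copies of $a\max X_{t-1}$.
\item The induction hypothesis at level $n-1$, with $s=n$ and $k_1=\dots=k_n=\max X_{t-1}$, yields an $f$-homogeneous $Y\subseteq X_t$ of some colour $i$. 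This $Y$ is $\bbomega^{n-1}\cdot(\max X_{t-1},\dots,\max X_{t-1})$-large${}^*(\theta)$.
\item By the choice of $t$, there is $c\leq\max X_{t-1}$ in an earlier block with $f(c)=i$. Then $\{c\}\cup Y$ is $\bbomega^n$-large${}^*(\theta)$ and homogeneous.
\end{enumerate}
This avoids the loss of $\theta$-apartness under unions altogether, because the minimum of an $\bbomega^n$-large${}^*(\theta)$ set is subject to no apartness condition. The extra level of branching in the definition of $\bbomega^{n+1}$-largeness${}^*(\theta)$, together with the factor $a$ supplied by exp-sparsity, is exactly what the pigeonhole steps inside the induction hypothesis consume.
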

\begin{proof}
By induction over $n$, and for a fixed $n$, by induction over~$s$.

Case 1: $n = 0$ and $s = 0$. Any singleton element is $\bbomega^0$-large${}^*(\theta)$, so $\{\min X\}$ is $f$-homogeneous and $\bbomega^0$-large${}^*(\theta)$.

Case 2: $n > 0$ and $s = 0$. Let $X_0 < \dots < X_{a-1} \subseteq X$ be pairwise $\theta$-apart $\bbomega^{n+1}$-large${}^*(\theta)$ subsets of $X$. Suppose $X_0$ is not $f$-homogeneous, otherwise we are done.
Let~$t < a$ be maximal such that $f(\bigcup_{j < t} X_j) \supseteq f(X_t)$.
Note that $t \geq 1$ exists since otherwise, 
$$2 \leq |f(X_0)| < |f(X_0 \cup X_1)| < \dots < |f(X_0 \cup \dots \cup X_{a-1})|$$
and therefore $|f(X)| \geq a+1$, contradicting the assumption that $f : X \to a$.

Since $X_t$ is $\bbomega^{n+1}$-large${}^*(\theta)$, then $X_t \setminus \{\min X_t\}$ is $\bbomega^n \cdot (\min X_t, \shortbrace{n+1}, \min X_t)$-large${}^*(\theta)$. By exp-sparsity, $4^{\max X_{t-1}} < \min X_t$ so $(\max X_{t-1})^2 < \min X_t$, thus $a \times \max X_{t-1} < \min X_t$. Then, by induction hypothesis, there is some $\bbomega^{n-1} \cdot (\max X_{t-1}, \shortbrace{n}, \max X_{t-1})$-large${}^*(\theta)$ $f$-homogeneous subset~$Y \subseteq X_t$ for some color~$i < a$. By choice of~$t$, there is some~$c \in \bigcup_{j < t} X_j$ such that $f(c) = i$. Note that $c \leq \max X_{t-1}$, so $\{c\} \cup Y$ is $\bbomega^n$-large${}^*(\theta)$, and $f$-homogeneous for color~$i$.

Case 3: $n \geq 0$ and $s > 0$. There exists $a k_s$ pairwise $\theta$-apart $\bbomega^{n+1} \cdot (a, a k_1, \dots, a k_{s-1})$-large${}^*(\theta)$ sets $X_0 < \dots < X_{a k_s-1}$. By induction hypothesis, for each~$j < a k_s$, there is an $\bbomega^n \cdot (k_1, \dots, k_{s-1})$-large${}^*(\theta)$ $f$-homogeneous subset $Y_j \subseteq X_j$ for some color~$i_j < a$. There is some~$i < a$ such that $i_j = i$ for at least $a$ many $j$'s. Then $\bigcup_{i_j = i} Y_j$ is $f$-homogeneous for color~$i$ and $\bbomega^n \cdot (k_1, \dots, k_{s-1}, k_s)$-large${}^*(\theta)$.
\end{proof}

\begin{corollary}\label[corollary]{cor:largeness-partition-simple}
Let $X$ be a finite set, $n \in \NN$ and $a \leq \min X$.
If $X$ is $\bbomega^{n+1} \cdot a$-large${}^*(\theta)$ and exp-sparse, and $f : X \to a$ is a coloring, then there is an $f$-homogeneous $\bbomega^n$-large${}^*(\theta)$ subset.
\end{corollary}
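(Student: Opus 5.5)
This corollary is the special case $s = 0$ of \Cref{prop:largeness-partition}, but read with the outer multiplier $a$ in place of the sequence $(a, ak_1, \dots, ak_s)$. With $s=0$, the proposition's hypothesis is that $X$ is $\bbomega^{n+1}\cdot(a)$-large${}^*(\theta)$, and $(a)$ is the length-one sequence. By the definition of $\L\cdot\sigma$ we have $\L\cdot(a) = (\L\cdot a)\cdot\epsilon = \L\cdot a$. So $\bbomega^{n+1}\cdot(a)$-largeness${}^*(\theta)$ is literally the same as $\bbomega^{n+1}\cdot a$-largeness${}^*(\theta)$, namely the existence of $a$ pairwise $\theta$-apart $\bbomega^{n+1}$-large${}^*(\theta)$ subsets $X_0<\dots<X_{a-1}$ of $X$.

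The plan is therefore as follows. First, record the identity $L^\theta_{n+1}\cdot(a) = L^\theta_{n+1}\cdot a$ coming from the inductive definition of $\L\cdot\sigma$. Second, apply \Cref{prop:largeness-partition} with $s=0$, the given $a\le\min X$, and the exp-sparse set $X$ together with the coloring $f:X\to a$. Third, read off the conclusion: there is an $f$-homogeneous subset that is $\bbomega^n\cdot(k_1,\dots,k_s)$-large${}^*(\theta)$ with the empty sequence, that is, $L^\theta_n\cdot\epsilon = L^\theta_n$, which is exactly $\bbomega^n$-largeness${}^*(\theta)$.

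No step here is a real obstacle. The only point to check is that this parameter instantiation matches how the proposition's proof handles $s=0$. In that case, Cases 1 and 2 of the proof use precisely $a$ pairwise $\theta$-apart $\bbomega^{n+1}$-large${}^*(\theta)$ subsets, so the specialization is faithful. For $n=0$ the conclusion is trivial anyway, since any singleton is homogeneous and $\bbomega^0$-large${}^*(\theta)$.
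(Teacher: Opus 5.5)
Your proposal is correct and follows the paper's proof exactly: the paper also obtains the corollary as the instance $s=0$ of \Cref{prop:largeness-partition}. As you note, $\bbomega^{n+1}\cdot(a)$-largeness${}^*(\theta)$ is by definition the same as $\bbomega^{n+1}\cdot a$-largeness${}^*(\theta)$, and the empty sequence in the conclusion gives plain $\bbomega^n$-largeness${}^*(\theta)$.
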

\begin{proof}
Immediate by \Cref{prop:largeness-partition} with $s = 0$.
\end{proof}

\begin{corollary}\label[corollary]{cor:largeness-partition-verysimple}
Let $X$ be a finite set and $n \in \NN$.
If $X$ is $\bbomega^{n+2}$-large${}^*(\theta)$ and exp-sparse, and $f : X \to \min X$ is a coloring, then there is an $f$-homogeneous $\bbomega^n$-large${}^*(\theta)$ subset.
\end{corollary}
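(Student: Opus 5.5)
We unfold the definition of $\bbomega^{n+2}$-largeness${}^*(\theta)$ once and then apply \Cref{prop:largeness-partition} with $a = \min X$. Write $x_0 = \min X$ and $X' = X \setminus \{x_0\}$. By definition, $X'$ is $\bbomega^{n+1} \cdot (x_0, \shortbrace{n+2}, x_0)$-large${}^*(\theta)$. Since $n+2 \geq 2$, we may take $s = 1$ (the remaining $n$ levels are simply discarded). By \Cref{lem:product-is-tree}, discarding levels of the tree preserves largeness, since leaves of the shallower tree are unions of subtrees that are themselves large, by monotonicity under supersets. Hence $X'$ is $\bbomega^{n+1} \cdot (x_0, x_0)$-large${}^*(\theta)$.

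The aim is to match the shape $\bbomega^{n+1} \cdot (a, a k_1)$ of \Cref{prop:largeness-partition} with $a = x_0$ and $k_1 = 1$, which would produce an $f$-homogeneous $\bbomega^n \cdot 1$-large${}^*(\theta)$, i.e.\ $\bbomega^n$-large${}^*(\theta)$, subset. Indeed $(a, a\cdot 1) = (x_0, x_0)$, so $X'$ has exactly the required form. It remains to check the other hypotheses. First, $X'$ is exp-sparse, being a subset of the exp-sparse set $X$. Second, we need $a \leq \min X'$, which holds since $x_0 < \min X'$. Third, the coloring $f$ restricted to $X'$ takes values in $x_0 = a$. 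Applying \Cref{prop:largeness-partition} then yields the desired homogeneous subset of $X' \subseteq X$.

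In fact $s = 0$ would already suffice: \Cref{cor:largeness-partition-simple} needs only $\bbomega^{n+1} \cdot x_0$-largeness, which follows from keeping a single level of the tree, so the $s=1$ version is not needed. I would nevertheless present the argument via \Cref{cor:largeness-partition-simple} for simplicity.

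The only mildly delicate step is justifying that $\L \cdot (x_0, \shortbrace{n+2}, x_0) \subseteq \L \cdot x_0$. By \Cref{lem:product-is-tree}, the first level $g(0), \dots, g(x_0-1)$ of the tree consists of pairwise $\theta$-apart sets, each of which contains a leaf $g(i\cdot 0\cdots 0) \in \L$. Subsets of $\theta$-apart sets remain $\theta$-apart, so these leaves witness membership in $\L \cdot x_0$. Everything else is a direct substitution, so I do not expect any real obstacle.
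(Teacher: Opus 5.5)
Your proposal is correct and is the paper's argument: unfold the definition once, so that $X \setminus \{\min X\}$ is $\bbomega^{n+1}\cdot \min X$-large${}^*(\theta)$ after truncating the tree of \Cref{lem:product-is-tree} to its first level, and then apply \Cref{cor:largeness-partition-simple} with $a = \min X$. Your explicit checks of the truncation step, the exp-sparsity of the subset, $a \leq \min (X \setminus \{\min X\})$, and the range of $f$ are the details the paper leaves implicit.
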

\begin{proof}
If $X$ is $\bbomega^{n+2}$-large${}^*(\theta)$, then $X \setminus \{\min X\}$ is $\bbomega^{n+1} \cdot \min X$-large${}^*(\theta)$, so apply \Cref{cor:largeness-partition-simple}.
\end{proof}

\subsection{Partition theorem for regular trees}

The purpose of this section is to prove a Ramsey-type theorem for colorings of pairs of leaves of a regular tree (\Cref{prop:partition-theorem-coloring-products}), which will be used in the inductive step of the computations of $\EM$-largeness${}^*(\theta)$ and $\RT^2_2$-largeness${}^*(\theta)$. For this, we will use a finite version of Milliken's tree theorem with primitive recursive bounds~\cite{milliken1979ramsey,dodos2016ramsey}.

Milliken's tree theorem is formulated in terms of \emph{strong subforests}, which is a notion of subforest perserving some structural properties of the original forest. More precisely, nodes at the same level of a strong subforest have to come from the same level in the original forest, and the branching degree must be preserved. In this section, we shall exclusively work with strong suforests of $(\bstr, \dots, \bstr)$, and define the notion of strong subforest only in this specialized case.

\begin{definition}
A finite sequence $\vec{T} = (T_0, \dots, T_{d-1})$ of finite subsets of $\bstr$ is a \emph{strong $d$-forest} if there is some~$\ell \in \NN$, a \emph{level function} $h : \{0, \dots, \ell\} \to \NN$ and a finite sequence of functions $(f_0, \dots, f_{d-1})$ with $f_i : 2^{\leq \ell} \to T_i$ such that for every~$i < d$
\begin{itemize}
    \item[(1)] for every~$\sigma \in 2^{\leq \ell}$, $|f_i(\sigma)| = h(|\sigma|)$;
    \item[(2)] for every~$\sigma \in 2^{<\ell}$ and $i < 2$, $f_i(\sigma \cdot i) \succeq f_i(\sigma) \cdot i$.
\end{itemize}
We call~$\ell$ the \emph{height} of~$\vec{T}$, and write $\Leaves(T_i) = \{ f_i(\sigma) : \sigma \in 2^\ell \}$ for the set of \emph{leaves} of~$T_i$. 
\end{definition}

\begin{remark}
Note that contrary to computability theory in which a tree is classically required to be closed under prefix, the notion of tree in Milliken's tree theorem has to be understood as the structure $(S, \preceq)$ for a subset $S \subseteq \bstr$. 
\end{remark}

Given a strong $d$-forest $\vec{T} = (T_0, \dots, T_{d-1})$, we write $\Str_\ell(\vec{T})$ for the set of all strong $d$-forests $\vec{S} = (S_0, \dots, S_{d-1})$ of depth~$\ell$ such that for every~$i < d$, $S_i \subseteq T_i$.
The following theorem is the restriction of Dodos et al.~\cite[Theorem 3.28]{dodos2016ramsey} to strong forests. We write $\E^n$ for the $n$th level of the Grzegorczyk hierarchy~\cite{grzegorczyk1953some} of primitive recursive functions.

\begin{theorem}[Finite Milliken's tree theorem (see~\cite{dodos2016ramsey})]\label[theorem]{thm:finite-milliken}
For every integer~$d \geq 1$ and every triple $\ell, k, r$ of positive integers with $\ell \geq k$, there exists a positive integer~$N$ with the following property. If $\vec{T} = (T_0, \dots, T_{d-1})$ is a strong $d$-forest of depth~$N$, then for every coloring $f : \Str_k(\vec{T}) \to r$, there exists $\vec{S} \in \Str_\ell(\vec{T})$ such that the set $\Str_k(\vec{S})$ is $f$-monochromatic. The least positive integer with this property will be denoted by $\Mil(d, \ell, k, r)$ and is primitive recursive in its parameters, belonging to the class $\mathcal{E}^7$.
\end{theorem}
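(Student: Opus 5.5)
The statement is essentially a specialization of the finite Milliken theorem for vector trees in Dodos and Kanellopoulos's treatment~\cite{dodos2016ramsey}, so the plan is to derive it from there rather than reprove the Ramsey theory from scratch. The main work is a careful translation between their framework and our notion of strong $d$-forest, followed by an inspection of the growth rate of their bound.

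\textbf{Reduction to the canonical forest.} Given a strong $d$-forest $\vec{T}$ of depth $N$, witnessed by a level function $h$ and maps $f_0,\dots,f_{d-1}$, I will identify $\vec{T}$ with the vector tree $(2^{\leq N},\dots,2^{\leq N})$ via $(f_0,\dots,f_{d-1})$. I will check that this identification induces a bijection between $\Str_k(\vec{T})$ and $\Str_k(2^{\leq N},\dots,2^{\leq N})$, and likewise for $\Str_\ell$. The two properties to verify are these.
\begin{itemize}
\item[(i)] Composing the witnessing maps of a strong subforest with the $f_i$ again gives maps satisfying conditions (1) and (2). Here (2) is preserved because $f_i(\sigma\cdot j)\succeq f_i(\sigma)\cdot j$ and $\preceq$ is transitive. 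Condition (1) is preserved because the $f_i$ send level $m$ to level $h(m)$, uniformly in $i$.
\item[(ii)] Conversely, every strong subforest of $\vec{T}$ arises this way. This uses the fact that each $f_i$ is injective, level-preserving and respects the left/right successor order.
\end{itemize}
A coloring of $\Str_k(\vec{T})$ then pulls back to a coloring of the strong subforests of the canonical forest. So it suffices to prove the theorem for $\vec{T}=(2^{\leq N},\dots,2^{\leq N})$.

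\textbf{Applying the vector-tree theorem.} For the canonical forest I will invoke \cite[Theorem 3.28]{dodos2016ramsey}, which states the finite Milliken theorem for finite vector trees with bounded branching, with $d$ trees, strong subtrees of height $k$ inside height $\ell$, and $r$ colors. Two checks are needed.
\begin{itemize}
\item Their definition of a strong subtree of a vector tree coincides with ours when every tree is $2^{<\NN}$. In particular, the common level set across the $d$ coordinates in their definition corresponds to our shared level function $h$, and the immediate-successor condition matches (2).
\item The heights are indexed consistently. Depth $\ell$ in our convention means $\ell+1$ levels, so I will absorb a possible shift by one by taking $N$ one larger if needed.
\end{itemize}

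\textbf{Bounding the growth rate (expected main obstacle).} The harder step is confirming the claimed complexity $\E^7$, since it requires tracing the proof in~\cite{dodos2016ramsey}. Their argument has three layers.
\begin{itemize}
\item The dimension-$d$ Halpern--Läuchli theorem for strong subtrees, proved via Shelah-type bounds for the Hales--Jewett theorem; these bounds lie in $\E^5$.
\item An induction on $k$ that bootstraps Halpern--Läuchli into Milliken's theorem for height-$k$ subtrees. Each step iterates the previous bound a number of times depending on $\ell$ and $r$.
\item Primitive recursion in the parameters $k$ and $\ell$, which costs at most two further Grzegorczyk levels.
\end{itemize}
I will bound each layer in order and compose them to land in $\E^7$. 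The paper only needs primitive recursiveness, so that the bound is provably total in $\ISig_1$ and can be dominated via \Cref{thm:large-dominates-pr} and \Cref{cor:new-large-g-sparse}. Any slack in the exact Grzegorczyk level is therefore harmless for the intended applications.
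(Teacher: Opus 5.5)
Your proposal is correct and follows the paper's route. The paper gives no argument beyond noting that the statement is the restriction of \cite[Theorem 3.28]{dodos2016ramsey} to binary strong forests, and your pull-back along the witnessing maps $f_i$ is exactly that identification, with two small corrections. First, the off-by-one between depth and number of levels shifts $k$ and $\ell$ as well as $N$, so you should apply their theorem with $\ell+1$ and $k+1$ rather than only enlarging $N$. Second, the $\mathcal{E}^7$ bound is already part of the cited theorem's statement, so the proof-tracing you flag as the main obstacle is not needed.
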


In our use of Milliken's tree theorem, we shall restrict ourselves to strong $d$-subforests whose leaves have the same length as the parent strong $d$-forest.
Given a strong $d$-forest $\vec{T} = (T_0, \dots, T_{d-1})$, we write $\Str^l_n(\vec{T})$ for the set of all strong $d$-forests $\vec{S} = (S_0, \dots, S_{d-1})$ of depth~$n$ such that for every~$i < d$, $\Leaves(S_i) \subseteq \Leaves(T_i)$.

\begin{corollary}\label[lemma]{cor:milliken-leaves-included}
    Let $d,\ell,k,r \in \NN$ and let $M = \Mil(d,\ell,k,r)$. If $\vec{T} = (T_0, \dots,\allowbreak T_{d-1})$ is a strong $d$-forest of depth~$M$, then for every coloring $f : \Str_k^l(\vec{T}) \to r$ there exists some $\hat{\vec{S}} \in \Str_{\ell}^l(\vec{T})$ such that $\Str_k^l(\hat{\vec{S}})$ is $f$-monochromatic. 
\end{corollary}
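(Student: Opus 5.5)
The plan is to reduce to \Cref{thm:finite-milliken} applied to the same forest $\vec{T}$. Since $M = \Mil(d,\ell,k,r)$, the parameters match exactly. The only difference between $\Str_k(\vec T)$ and $\Str^l_k(\vec T)$ is that forests in the latter have their leaves among the leaves of $\vec T$, that is, at the last level $h(M)$. So I would first extend the coloring $f$ to a coloring $\hat f : \Str_k(\vec{T}) \to r$, then take a monochromatic $\vec S \in \Str_\ell(\vec T)$, and finally push $\vec S$ down to the leaf level of $\vec T$.

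\textbf{Step 1: extending the coloring.} For each $\vec U \in \Str_k(\vec T)$ I would define a canonical leaf-extension $\vec U^l \in \Str^l_k(\vec T)$. For every leaf $u = g_i(\sigma)$ of $U_i$, replace $u$ by the leftmost extension of $u$ in $\Leaves(T_i)$. To make this precise, write $u = f_i(\rho)$ with $\rho \in 2^{\leq M}$; then the replacement is $f_i(\rho\cdot 0^{M-|\rho|})$. All other nodes stay the same. Set $\hat f(\vec U) = f(\vec U^l)$.

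I need to check that $\vec U^l$ is again a strong $d$-forest. The level function is unchanged except that the last value becomes $h(M)$. Condition (2) still holds, because the new leaf extends the old leaf, and the old leaf extended $g_i(\sigma')\cdot j$. Every node of $\vec U^l$ lies in $T_i$, so $\vec U^l \in \Str^l_k(\vec T)$.

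\textbf{Step 2: applying Milliken and pushing down.} By \Cref{thm:finite-milliken} there is $\vec S \in \Str_\ell(\vec T)$ with $\Str_k(\vec S)$ being $\hat f$-monochromatic, say of colour $c$. Let $\hat{\vec S} = \vec S^l$, using the same leftmost-extension operation. This is in $\Str^l_\ell(\vec T)$.

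It remains to show that $\Str^l_k(\hat{\vec S})$ is $f$-monochromatic. Take $\vec V \in \Str^l_k(\hat{\vec S})$. Every node of $\vec V$ is a node of $\hat{\vec S}$. I would then define $\vec V^-$ by replacing each leaf of $\vec V$, which is a leaf of $\hat{\vec S}$, by the corresponding leaf of $\vec S$ it came from. The nodes of $\vec S$ sit at the level $h_S(\ell)$.

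\textbf{Main obstacle.} The delicate point is checking that $\vec V^- \in \Str_k(\vec S)$ and that $(\vec V^-)^l = \vec V$.

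For the first claim, internal nodes of $\vec V$ are internal nodes of $\hat{\vec S}$, hence of $\vec S$, and are strictly shorter than the $\vec S$-leaves, so the strong-forest conditions transfer. The leaves of $\vec V^-$ all have length $h_S(\ell)$. Condition (2) holds because if a leaf of $\hat{\vec S}$ extends $v\cdot j$ with $v$ an internal node of $\vec S$, then so does its $\vec S$-ancestor of length $h_S(\ell) > |v|$.

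For the second claim, the leftmost extension in $\vec T$ of an $\vec S$-leaf $s$ is exactly the corresponding $\hat{\vec S}$-leaf, by the definition of $\vec S^l$. Hence $(\vec V^-)^l = \vec V$, and so $f(\vec V) = \hat f(\vec V^-) = c$.

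If the leftmost-extension convention ran into trouble with the indexing by $f_i$, I would instead fix, for each node of $T_i$, the leaf $f_i(\rho 0^{M-|\rho|})$ via its $f_i$-preimage. This is well defined because $f_i$ is injective, which follows from condition (2).
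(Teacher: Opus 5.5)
Your proof is correct and follows the paper's argument. Like the paper, you extend each leaf of a forest in $\Str_k(\vec T)$ along its leftmost path in $\vec T$, set $\hat f(\vec U) = f(\vec U^l)$, apply the finite Milliken theorem to obtain $\vec S$, and take $\hat{\vec S} = \vec S^l$; your push-down map $\vec V \mapsto \vec V^-$ just verifies in detail the paper's one-line claim that every element of $\Str^l_k(\hat{\vec S})$ has the form $\hat{\vec R}$ for some $\vec R \in \Str_k(\vec S)$.
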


\begin{proof}
Fix such family $\vec{T}$ and coloring $f$.

For $\sigma \in T_i$ for some $i < d$, let $\hat{\sigma} \in \Leaves(T_i)$ be the extension of $\sigma$ in $T_i$ by its leftmost path. Similarly, for $R \in \Str(T_i)$, define $\hat{R} \in \Str^l(T_i)$ to be the tree obtained by extending all the leaves of $R$ by their leftmost path in $T_i$. For $\vec{R} = (R_0, \dots, R_{d-1}) \in \Str(\vec{T})$, let $\hat{\vec{R}} := (\hat{R}_0, \dots, \hat{R}_{d-1}) \in \Str^l(\vec{T})$.

Let $\hat{f} : \Str_k(\vec{T}) \to r$ be defined by $\hat{f}(\vec{R}) := f(\hat{\vec{R}})$. By \Cref{thm:finite-milliken}, there exists $\vec{S} \in \Str_{\ell}(\vec{T})$ such that $\Str_k(\vec{S})$ is monochromatic for $\hat{f}$.
By definition of $\hat{f}$, $\hat{\vec{S}}$ is monochromatic for $f$. Indeed, every element of $\Str_k^l(\hat{\vec{S}})$ is of the form $\hat{\vec{R}}$ for some $\vec{R} \in \Str_k(\vec{S})$ and $f(\hat{\vec{R}}) = \hat{f}(\vec{R})$ by definition of $f$.
\end{proof}

\begin{lemma}\label[lemma]{lem:singleton-coloring-one-step}
    Let $d, \ell, r \in \NN$, there exists a primitive recursive bound $B = B_{\ref{lem:singleton-coloring-one-step}}(d,\ell, r)$, such that: if $\vec{T} = (T_0, \dots, T_{d-1})$ is a strong $d$-forest of depth~$B$, then for every coloring $g : \Leaves(T_0) \cup \dots \cup \Leaves(T_{d-1}) \to r$, there exists some $\vec{S} = (S_0, \dots, S_{d-1}) \in \Str_{\ell}^l(\vec{T})$ such that for every $i < d$ and every $\sigma,\sigma' \in \Leaves(S_i)$, $g(\sigma) = g(\sigma')$.
\end{lemma}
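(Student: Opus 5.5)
The idea is to apply \Cref{cor:milliken-leaves-included} with $k = 1$ and a suitable product coloring, and then argue that monochromaticity on depth-$1$ strong subforests forces each tree's leaves to be monochromatic. I set $\ell' = \max(\ell, 2)$ and
$$B_{\ref{lem:singleton-coloring-one-step}}(d,\ell,r) = \Mil(d, \ell', 1, r^{2d}).$$
This is primitive recursive by \Cref{thm:finite-milliken}. I avoid $k=0$ because \Cref{thm:finite-milliken} is stated for positive integers.

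\textbf{The coloring.} Fix $\vec{T}$ of depth $B$ and $g$. Each $\vec{R} = (R_0,\dots,R_{d-1}) \in \Str^l_1(\vec{T})$ consists, for each $i<d$, of a root $\rho_i$ and two leaves $\lambda^0_i \succeq \rho_i\cdot 0$ and $\lambda^1_i \succeq \rho_i \cdot 1$ of $T_i$. I color $\vec{R}$ by the tuple
$$\bigl(g(\lambda^0_i), g(\lambda^1_i)\bigr)_{i<d} \in (r\times r)^d,$$
coded as a number below $r^{2d}$. By \Cref{cor:milliken-leaves-included} there is $\hat{\vec{S}} \in \Str^l_{\ell'}(\vec{T})$ such that $\Str^l_1(\hat{\vec{S}})$ is monochromatic. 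So there are colors $A_i, B_i < r$ such that every depth-$1$ strong subforest of $\hat{\vec S}$ with leaves among the leaves of $\hat{\vec S}$ has its left leaf in tree $i$ colored $A_i$ and its right leaf colored $B_i$.

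\textbf{Each $\Leaves(\hat S_i)$ is monochromatic.} Let $\rho_i$ be the root of $\hat S_i$.
\begin{itemize}
\item Any leaf $\sigma$ of $\hat S_i$ extends the $\hat S_i$-node above $\rho_i\cdot c$ for some $c<2$. Build a depth-$1$ strong subforest at the root level of every tree by choosing $\sigma$ as the $c$-side leaf in tree $i$ and arbitrary leaves elsewhere. Since the roots all sit at the same level, this is a legitimate element of $\Str^l_1(\hat{\vec S})$. Hence leaves on the left side of $\rho_i$ have color $A_i$ and leaves on the right side have color $B_i$.
\item To show $A_i = B_i$, use that $\ell' \geq 2$. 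In each tree $j$, take the level-$1$ node $\tau_j$ of $\hat S_j$ lying above $\rho_j\cdot 0$. These nodes lie at a common level, and each has two children subtrees. Choosing leaves in both children gives an element of $\Str^l_1(\hat{\vec S})$ whose right leaf in tree $i$ gets color $B_i$. That leaf lies above $\rho_i \cdot 0$, so by the previous point its color is also $A_i$. Thus $A_i = B_i$.
\end{itemize}

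\textbf{Restricting to depth $\ell$.} Finally I cut $\hat{\vec S}$ down to some $\vec S \in \Str^l_\ell(\vec T)$ with $\Leaves(S_i) \subseteq \Leaves(\hat S_i)$, for instance by following leftmost paths below a chosen level, which keeps full-length leaves. Then $g$ is constant on each $\Leaves(S_i)$.

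The main obstacle is checking that the strong subforests built in the second step really belong to $\Str^l_1$. This requires that the chosen nodes sit at the same level across all $d$ trees and that the chosen leaves extend the correct immediate successors. This follows from the level function $h$ in the definition of a strong $d$-forest, which is shared by all the trees.
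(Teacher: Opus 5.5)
Your proof is correct, and its core is the paper's: apply \Cref{cor:milliken-leaves-included} with $k=1$ to the product of the per-tree leaf colors. The paper's version is a one-liner with $B=\Mil(d,\ell,1,r^d)$. It treats an element of $\Str^l_1(\vec T)$ as a $d$-tuple of single leaves $(\sigma_0,\dots,\sigma_{d-1})$, which is the convention of \cite{dodos2016ramsey} where height counts levels. The paper uses the same convention in the next proof, where it identifies $\Str^l_1(T)$ with $\Leaves(T)$. Under that reading, monochromaticity of $(g(\sigma_0),\dots,g(\sigma_{d-1}))$ gives the conclusion at once.

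You instead follow the written definition of a strong $d$-forest ($f_i : 2^{\le \ell} \to T_i$), under which a depth-$1$ forest has a root and two leaves per tree. This costs you the $r^{2d}$ coloring, the passage to depth $\max(\ell,2)$, and your extra level-$1$ argument that $A_i = B_i$. In exchange, your proof is valid for the definitions exactly as stated, whereas the paper's relies on an off-by-one shift in the height convention. Both bounds are primitive recursive, which is all that \Cref{prop:partition-theorem-coloring-products} needs.
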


\begin{proof}
Let $B_{\ref{lem:singleton-coloring-one-step}}(d,\ell,r) = \Mil(d,\ell,1,r^d)$. Fix such vector $\vec{T}$ and coloring~$f$.
Consider the coloring $h : \Str_1^l(\vec{T}) \to r^d$ defined by:
$$h : (\sigma_0, \dots, \sigma_{d-1}) \mapsto (g(\sigma_0), \dots, g(\sigma_{d-1}))$$
Then, by \Cref{cor:milliken-leaves-included}, there exists some $\vec{S} = (S_0, \dots, S_{d-1}) \in \Str_{\ell}^l(\vec{T})$ such that $\Str_1^l(\vec{S})$ is $h$-monochromatic. Hence, $g$ is constant on every $\Leaves(S_i)$ for $i < d$.
\end{proof}

\begin{lemma}\label[lemma]{lem:pairs-coloring-one-step}
    Let $d,\ell \in \NN$, there exists a primitive recursive bound $B = B_{\ref{lem:pairs-coloring-one-step}}(d, \ell)$, such that: if $\vec{T} = (T_0, \dots, T_{d-1})$ is a strong $d$-forest of depth~$B$, then for every coloring $f : [\Leaves(T_0) \cup \dots \cup \Leaves(T_{d-1})]^2 \to 2$, there exists some $\vec{S} = (S_0, \dots, S_{d-1}) \in \Str_{\ell}^l(\vec{T})$ such that for every $i, j < d$ and every $\sigma,\sigma' \in \Leaves(S_i)$ and $\tau,\tau' \in \Leaves(S_j)$, $f(\sigma,\tau) = f(\sigma',\tau')$.
\end{lemma}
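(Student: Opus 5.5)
We derive this from the finite Milliken theorem applied to strong sub-forests of height $2$ with leaves preserved (\Cref{cor:milliken-leaves-included}), following the pattern of \Cref{lem:singleton-coloring-one-step}. A pair $\{\sigma,\tau\}$ of leaves either lies in a single tree $T_i$ or in two trees $T_i, T_j$ with $i \neq j$. The difficulty is that the color of a pair of leaves of one strong subtree is not determined by a single coloring of $\Str^l_k(\vec{T})$ for fixed $k$. We therefore encode pairs inside small strong sub-forests, with a bounded amount of extra data describing their shape.

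\textbf{Step 1: coloring.} To every $\vec{R} = (R_0,\dots,R_{d-1}) \in \Str^l_1(\vec{T})$, i.e.\ a choice of one leaf $\rho_i$ in each tree (depth-$0$ forests), we associate the $d\times d$ table $(f(\rho_i,\rho_j))_{i<j}$. This uses at most $2^{d^2}$ colors and handles the pairs coming from different trees. For pairs within the same tree, we use depth-$1$ forests: an element of $\Str^l_1$ of height $1$ gives each $T_i$ a root and two leaves $\rho_i^0,\rho_i^1$ extending the root's $0$- and $1$-successors. We color such an $\vec{R}$ by the full table of $f$ on all pairs among the $2d$ leaves, including ordered cross pairs such as $f(\rho_i^0,\rho_j^1)$. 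This gives $r = 2^{\binom{2d}{2}}$ colors. We then set $B_{\ref{lem:pairs-coloring-one-step}}(d,\ell) = \Mil(d,\ell+1,1,r)$, or a composition of two Milliken applications, one for depth $0$ and one for depth $1$, to be safe.

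\textbf{Step 2: homogenization.} Applying \Cref{cor:milliken-leaves-included} yields $\vec{S}\in\Str^l_{\ell+1}(\vec{T})$ on which all height-$1$ strong leaf-subforests receive the same table, and likewise all height-$0$ ones. We then pass to a suitable $\vec{S}'\in\Str^l_\ell(\vec{S})$. Any two distinct leaves $\sigma,\sigma'$ of $S_i$ split at some meet. By choosing, in the other trees, nodes at the same level together with leftmost extensions, we complete them to a height-$1$ strong leaf-subforest of $\vec{S}$ in which $\{\sigma,\sigma'\}$ is the pair $\{\rho_i^0,\rho_i^1\}$. Hence $f(\sigma,\sigma')$ is a fixed value $c_{ii}$. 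For $\sigma\in S_i$ and $\tau\in S_j$ with $i\neq j$, homogeneity of the depth-$0$ coloring fixes $f(\sigma,\tau)=c_{ij}$. This gives the required conclusion, including the case where the pair is ordered as $(\sigma,\tau)$ versus $(\sigma',\tau')$, since all leaves have equal length and the trees are indexed in a fixed order.

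\textbf{Main obstacle.} The delicate point is ensuring that every pair within a tree is realized as the $(0,1)$-split pair of some height-$1$ strong sub-forest. The strong-forest conditions require that, in all $d$ trees, the root level and the split directions be simultaneously consistent, namely a common level function $h$ with $f_i(\sigma\cdot b)\succeq f_i(\sigma)\cdot b$. We must check that the meet level of $\sigma,\sigma'$ in $T_i$ can be matched by a node at that same level in each other $T_j$. This holds because $\vec{S}$ is a strong forest, so its levels are shared across trees. The lexicographic orientation, with $\sigma$ going left and $\sigma'$ going right, is automatic. If this matching fails for some edge case, the fallback is to color $\Str^l_2$ instead and extract the pattern from a larger height.
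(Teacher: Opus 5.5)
Your proposal follows the paper's proof. Both apply \Cref{cor:milliken-leaves-included} twice: once to single-leaf tuples colored by the cross-tree table $(f(\sigma_i,\sigma_j))_{i<j}$, and once to two-level leaf-preserving forests. The paper colors the latter only by the within-tree values $(f(\sigma_i,\tau_i))_{i<d}$. It then completes an arbitrary pair to such a forest, as you do. Your check that the meet level of $\sigma,\sigma'$ can be matched by nodes of every other tree is correct, and the paper leaves it implicit: it describes $\Str^l_2(\vec T)$ as arbitrary tuples of pairs, dropping the common-level constraint.

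One point must be fixed: the composition you mention ``to be safe'' is required, not optional. The paper takes
$$B=\Mil\bigl(d,\Mil(d,\ell,1,2^{d(d-1)/2}),2,2^d\bigr).$$
It first homogenizes the two-level forests. It then homogenizes single-leaf tuples inside the resulting $\vec S'$, and the first homogeneity is inherited because $\Str^l_2(\vec S)\subseteq\Str^l_2(\vec S')$ whenever $\vec S\in\Str^l_\ell(\vec S')$.

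Your primary bound $\Mil(d,\ell+1,1,r)$ is a single application. It does not give the homogeneity of the height-$0$ coloring that your Step~2 uses for cross-tree pairs. Also, in the paper's indexing, where $\Str^l_1$ is single leaves and $\Str^l_2$ is pairs, two-level forests correspond to $k=2$, not $k=1$.

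Your full $\binom{2d}{2}$-table could make a single application work, but only with an argument you did not give. Fix $i\neq j$ and suppose there are at least two splitting levels. Choose leaves $\sigma\in\Leaves(S_i)$ and $\tau\in\Leaves(S_j)$ whose branching bits are $(a,b)$ at one level and $(a',b')$ at another. Using either level as the root of a two-level forest shows that homogeneity forces $f(\rho^a_i,\rho^b_j)=f(\sigma,\tau)=f(\rho^{a'}_i,\rho^{b'}_j)$. Hence the four cross constants coincide. Without that argument, the composition is what makes the proof go through.
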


\begin{proof}
Let $B_{\ref{lem:pairs-coloring-one-step}}(d,\ell) = \Mil(d,\Mil(d,\ell,1,2^{\frac{d(d-1)}{2}}),2,2^d)$. Fix such vector $\vec{T}$ and coloring~$f$.
First, notice that for a tree $T$, the sets $\Str_{1}^l(T)$ and $\Str_{2}^l(T)$ are in one-to-one correspondence with $\Leaves(T)$ and $[\Leaves(T)]^2$, respectively. Thus, $\Str_1^l(\vec{T})$ can be seen as the set $\{(\sigma_0, \dots, \sigma_{d-1}) : \sigma_i \in \Leaves(T_i) \}$ and $\Str_2^l(\vec{T})$ can be seen as the set $\{((\sigma_0,\tau_0), \dots, (\sigma_{d-1}, \tau_{d-1})) : \sigma_i,\tau_i \in \Leaves(T_i), \sigma_i < \tau_i\}$.

Consider the two colorings: $h_0 : \Str_2^l(\vec{T}) \to 2^d$ and $h_1 : \Str_1^l(\vec{T}) \to 2^{\frac{d(d-1)}{2}}$, defined by
$$h_0 : ((\sigma_0,\tau_0), \dots, (\sigma_{d-1}, \tau_{d-1})) \mapsto (f(\sigma_0, \tau_0), \dots, f(\sigma_{d-1}, \tau_{d-1}))$$
$$h_1 : (\sigma_0, \dots, \sigma_{d-1}) \mapsto (f(\sigma_0,\sigma_1),f(\sigma_0,\sigma_2), \dots, f(\sigma_{d-2},\sigma_{d-1})) $$
Then, by \Cref{cor:milliken-leaves-included}, there exists some $\vec{S} = (S_0, \dots, S_{d-1}) \in \Str_{\ell}^l(\vec{T})$ such that $\Str_1^l(\vec{S})$ is $h_1$-monochromatic, and such that $\Str_2^l(\vec{S})$ is $h_0$-monochromatic. 
Those two constraints gives us the desired result: the value of $f(x,y)$ for $x \in \Leaves(S_i)$ and $y \in \Leaves(S_j)$ only depends on the pair $(i,j)$. 
\end{proof}

We are now ready to prove our Ramsey-type theorem for regular trees.

\begin{proposition}\label[proposition]{prop:partition-theorem-coloring-products}
    Let $n,x \in \NN$. There exists a primitive recursive bound $B = B_{\ref{prop:partition-theorem-coloring-products}}(n, x)$ such that, for $(T, \preceq) \cong  (B^{\leq 2n-1}, \preceq)$, and for every coloring $f : [\Leaves(T)]^2 \to 2$, there exists a subtree $S \subseteq T$  such that $(S, \preceq) \cong (x^{\leq n}, \preceq)$, $\Leaves(S) \subseteq \Leaves(T)$ and $\Leaves(S)$ is $f$-homogeneous.
\end{proposition}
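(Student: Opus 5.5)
The idea is to use the tree structure to separate the leaves of $T$ into levels of ``splitting'' and ``thinning'', alternating $n$ rounds of Milliken-type homogenization with $n-1$ rounds of choosing a branch. This is why the depth $2n-1$ appears: roughly $n$ levels are kept as the branching levels of $S$, and $n-1$ intermediate levels are used to pick a single child. First, embed $T \cong B^{\leq 2n-1}$ into a strong forest framework. Each node of $T$ has $B$ children, and one can find inside $(B, \text{children})$ a copy of $2^{\leq m}$ with $m \approx \log B$. So a subtree of $T$ whose nodes at a given depth branch $B$-fold contains strong $d$-forests of height $m$, whose leaves are descendants in $T$.

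I would proceed by induction on $n$, generalizing the statement to forests: for $d$ disjoint copies of $B^{\leq 2n-1}$, there are subtrees $S_0,\dots,S_{d-1}$, each $\cong x^{\leq n}$, such that $f$ restricted to $\Leaves(S_i)\times\Leaves(S_j)$ depends only on the pair of positions of the leaves relative to their meet. For $n=1$ (depth $1$, leaves are the children of the root) this is exactly \Cref{lem:pairs-coloring-one-step} applied to the single strong tree of height $m$ sitting in the $B$ children, together with \Cref{lem:singleton-coloring-one-step}. This yields $x$ leaves that are pairwise $f$-homogeneous, once $m$ is large enough that $2^{\ell}\geq x$ and $\ell$ is large enough for Ramsey on the resulting pattern.

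For the inductive step, consider the top level. Let $d=B'$ be the number of children of the root that we keep. For each child $c$, the subtree below $c$ has depth $2n-2$. First pick one grandchild per node at the second level (the ``thinning'' level); this is where the extra depth is used. The pairs of leaves then split into two kinds: those lying below the same child, handled by the induction hypothesis for $n-1$ applied to the forest of all the children's subtrees simultaneously, and those lying below different children. For the cross pairs, view each child's subtree as one tree $T_i$ of a strong $d$-forest. Apply \Cref{lem:pairs-coloring-one-step} with the forest version so that $f(\sigma,\tau)$ for $\sigma\in\Leaves(S_i)$, $\tau\in\Leaves(S_j)$ depends only on $(i,j)$. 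This gives a coloring of $[d]^2$, and finite Ramsey (primitive recursive) extracts $x$ children on which the cross color is constant. Finally, the induction hypothesis on each chosen child's subtree gives inner homogeneity, and another application of \Cref{lem:singleton-coloring-one-step} aligns the inner color with the cross color.

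Actually the alignment step is where I expect the main obstacle to lie. Inner homogeneity within each child can yield different colors in different children, and these colors need not match the cross color. The choice of colors has to be arranged so that the whole leaf set is homogeneous for one color. My fix would be to use the thinning levels: at each of the $n-1$ intermediate levels, apply a pigeonhole or Ramsey step to the children so that the color pattern becomes canonical. This should give a coloring of pairs depending only on the depth of the meet. I would then choose, for each meet-depth $k<n$, whether to keep branching at that level or collapse it to a single child. Retaining exactly the levels of one fixed color among the $2n-1$ available, and pigeonholing on $n$ of them, yields $n$ branching levels of a single color. This is the reason for the bound $2n-1=2\cdot n-1$. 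All bounds are compositions of $\Mil$, Ramsey numbers, and exponentials, and are hence primitive recursive.
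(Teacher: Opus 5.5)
There is a genuine gap. Your last paragraph reaches the right endgame, and the paper finishes the same way. First thin $T$ so that the colour of a pair of leaves depends only on the depth of their meet. Then pigeonhole the $2n-1$ depths to find $n$ of one colour, and collapse the other levels to a single child. So the reason for $2n-1$ is this pigeonhole, not $n$ branching levels interleaved with $n-1$ thinning levels fixed in advance, as your first paragraph suggests.

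But you never give a working way to reach that canonical form, and the mechanism you do describe fails:
\begin{itemize}
\item Suppose you apply \Cref{lem:pairs-coloring-one-step} to each child's entire depth-$(2n-2)$ subtree, re-encoded as a binary tree. The resulting strong subtrees need not respect the $B$-ary levels of $T$: all their splitting levels may fall inside one original level. The induction hypothesis can then no longer be run inside them.
\item Running the induction hypothesis on the original subtrees instead loses the cross-homogeneity. Its outputs also need not form a strong forest to which the lemma could be applied afterwards.
\item As you noticed yourself, homogeneity is the wrong inductive invariant. \Cref{lem:singleton-coloring-one-step} can make the inner colours of different children agree with one another, but never with the cross colour. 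So no top-down induction with a thinning level chosen beforehand can close.
\end{itemize}

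The missing piece is the canonicalisation itself, which is the heart of the paper's proof. ``A pigeonhole or Ramsey step to the children at each of the $n-1$ intermediate levels'' does not provide it. The paper does the following. It cuts the binarised tree into $2n-1$ blocks of levels and lets $P_k$ be the nodes at the top of block $k$. It treats the blocks from the bottom up. At block $k$ it applies \Cref{lem:singleton-coloring-one-step} and \Cref{lem:pairs-coloring-one-step} to the forest $\vec T^k$ of \emph{all} $|P_k|$ subtrees spanning exactly block $k$. This keeps the leaves in $P_{k+1}$ and preserves the level structure. One such application simultaneously:
\begin{itemize}
\item[(i)] makes the colour of pairs whose meet lies in block $k$ below a given node constant;
\item[(ii)] makes the colour between leaves below two different nodes of $P_k$ depend only on those two nodes, so that the induced colouring $f^k$ on $[P_k]^2$ is well defined for the next step;
\item[(iii)] makes the vector $g^{k+1}$ of colours already fixed at deeper blocks constant among siblings, so that in the end the colour depends on the meet depth alone and not on the branch.
\end{itemize}
This has to happen at every one of the $2n-1$ levels. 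Step (iii) is absent from your plan, and (ii) appears only in the broken form above.

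Finally, block $k$ must treat $2^{M_0+\dots+M_{k-1}}$ trees at once. So the block sizes $M_k$ must be fixed top-down before the bottom-up homogenisation, with $T$ trimmed to non-uniform branching $2^{M_k}$ and $B=2^{M_{2n-2}}$. Saying that all bounds are compositions of $\Mil$, Ramsey numbers and exponentials does not address this dependency.
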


\begin{proof}
Consider the sequence of integers $M_0, \dots, M_{2n-2}$ defined inductively as follows:
$$\begin{cases}
    M_{i} = B_{\ref{lem:singleton-coloring-one-step}}(2^{M_0 + \dots + M_{i-1}},B_{\ref{lem:pairs-coloring-one-step}}(2^{M_0 + \dots + M_{i-1}},x),2^{2n-1-i}) & \textit{ for } i < 2n-2 \\
    M_{2n-2} = B_{\ref{lem:pairs-coloring-one-step}}(2^{M_0 + \dots + M_{2n-3}},x) & 
\end{cases} $$

Note that, in the case $i = 0$, the sum $M_0 + \dots + M_{i-1}$ is equal to $0$, hence $M_0 = B_{\ref{lem:singleton-coloring-one-step}}(1, B_{\ref{lem:pairs-coloring-one-step}}(1, x), 2^{2n-1})$.
Then, let $B_{\ref{prop:partition-theorem-coloring-products}} = 2^{M_{2n-2}}$ and let $T$ be isomorphic to $B_{\ref{prop:partition-theorem-coloring-products}}^{\leq 2n-1}$. Since $M_i \leq M_{2n-2}$ for every $i \leq 2n-2$, we can then trim $T$ and consider a subtree $T'$ of $T$ such that the nodes of $T'$ at level $i  < 2n-2$ are $2^{M_i}$ branching. We can then view $T'$ as a $2$-branching tree of depth $M_{0} + \dots + M_{2n-2}$ by replacing every $2^{M_i}$-branching by $M_{i}$ consecutive $2$-branching.

Let $P_0 = \{\epsilon\}$, that is, the singleton root of~$T'$ and let $d_0 = 1 = \card P_0$. For $1 \leq k \leq 2n-1$, let $P_k$ be the nodes of $T'$ at level $M_0 + \dots + M_{k-1}$ and let $d_k = 2^{M_0 + \dots + M_{k-1}} = \card P_k$. For $0 \leq k \leq 2n-2$, let $\vec{T}^{k} = (T^{k}_0, \dots, T^{k}_{d_k - 1})$ be the family of subtrees of $T'$ beneath the elements of $P_k$ that corresponds to the levels between $M_0 + \dots + M_{k-1}$ and $M_0 + \dots + M_{k}$ of $T'$. Notice that $P_{k+1} = \Leaves(T_0^k) \cup \dots \cup \Leaves(T_{d_k-1}^k)$ for every $k \leq 2n-2$, and that $P_{2n-1} = \Leaves(T')$.

We can then define two families of colorings $f^k : [P_k]^2 \to 2$ and $g^k : P_k \to 2^{2n-1-k}$, as well as a family of forest $\vec{S}^k = (S_0^k, \dots, S_{d_k - 1}^k) \in \Str_x^l(\vec{T}^k)$, for $k \leq 2n-2$ inductively as follows: 

We have $M_{2n-2} = B_{\ref{lem:pairs-coloring-one-step}}(d_{2n-2},x)$, hence, by \Cref{lem:pairs-coloring-one-step}, there exists some $\vec{S}^{2n-2} = (S_0^{2n-2}, \dots, S^{2n-2}_{d_{2n-2} - 1}) \in \Str_{x}^l(\vec{T}^{2n-2})$ such that the value of $f$ on $[\Leaves(S_0^{2n-2}) \cup \dots \cup \Leaves(S^{2n-2}_{d_{2n-2} - 1})]^2$ only depends on the indexes of the $S^{2n-2}_i$ where the leaves belong.

For $\sigma, \tau \in P_{2n-2}$, let $f^{2n-2}(\sigma,\tau) = f(\sigma',\tau')$ for any $\sigma' \succ \sigma$ and any $\tau' \succ \tau$ in $\Leaves(S^{2n-2}_0) \cup \dots \cup \Leaves(S^{2n-2}_{d_{2n-2} - 1})$. For $\sigma \in P_{2n-2}$, let $g^{2n-2}(\sigma) = f(\sigma',\sigma'')$ for any $\sigma',\sigma'' \succ \sigma$ in $\Leaves(S^{2n-2}_0) \cup \dots \cup \Leaves(S^{2n-2}_{d_{2n-2} - 1})$. Note that both $f^{2n-2}$ and $g^{2n-2}$ are well-defined by choice of $\vec{S}^{2n-2}$.

Suppose that $f^{k+1} : [P_{k+1}]^2 \to 2$ and $g^{k+1} : P_{k+1} \to 2^{2n-1-k}$ are defined for some $1 \leq k < 2n-2$. Since 
$M_{k} = B_{\ref{lem:singleton-coloring-one-step}}(d_k,B_{\ref{lem:pairs-coloring-one-step}}(d_k,x),2^{2n-1-k})$,
we can apply successively \Cref{lem:singleton-coloring-one-step} and \Cref{lem:pairs-coloring-one-step} and obtain some $\vec{S}^{k} = (S_0^{k}, \dots, S^{k}_{d_{k} - 1}) \in \Str_{x}^l(\vec{T}^{k})$ such that $g^{k+1}$ is constant on each $\Leaves(S^k_i)$ and such that the value of $f^{k+1}$ on $[\Leaves(S_0^{k}) \cup \dots \cup \Leaves(S^{k}_{d_{k} - 1})]^2$ only depends on the indexes of the $S^k_i$ where the leaves belong.

For $\sigma, \tau \in P_{k}$, let $f^{k}(\sigma,\tau) = f^{k+1}(\sigma',\tau')$ for any $\sigma' \succ \sigma$ and any $\tau' \succ \tau$ in $\Leaves(S^{k}_0) \cup \dots \cup \Leaves(S^{k}_{d_{k} - 1}) \subseteq P_{k+1}$. For $\sigma \in P_{k}$, let $g^{k}(\sigma) = (f^{k+1}(\sigma',\sigma''), g^{k+1}(\sigma'))$ for any $\sigma',\sigma'' \succ \sigma$ in $\Leaves(S^{k}_0) \cup \dots \cup \Leaves(S^{k}_{d_{k} - 1}) \subseteq P_{k+1}$. Here again, this is well-defined by choice of~$\vec{S}^{k}$.

Finally, consider the subtree $S' \subseteq T'$ equal to $S^0_0$ on levels $0$ to $M_0$, then equal to the $S^1_i$ beneath the leaves of $S^0_0$ on levels $M_0$ to $M_0 + M_1$ and so on. 
\bigskip

\textbf{Claim}: \emph{for every $k \leq 2n-2$, every $\rho \in P_k \cap S'$ with $g^k(\rho) = (c_k, \dots, c_{2n-2})$, and every pair $\sigma, \tau \in \Leaves(S')$ with $\sigma, \tau \succeq \rho$ and with lowest common ancestor at level in the interval $[M_0 + \dots + M_{i-1}, M_0 + \dots + M_i)$, we have $f(\sigma, \tau) = c_i$.} Proceed by induction on $k$, if $k = 2n-2$, then let $\rho \in P_{2n-2} \cap S'$ and write $g^{2n-2}(\rho) = c_{2n-2}$. By construction, $g^{2n-2}(\rho) = f(\sigma,\tau) = c_{2n-2}$ for every $\sigma, \tau \succ \rho$ in $\Leaves(S')$. Let $k < 2n-2$ and assume the property to be true for $k +1$, let $\rho \in P_{k} \cap S'$ and write $g^{k}(\rho) = (c_k, \dots, c_{2n-2})$. Let $\sigma, \tau \in \Leaves(S')$ with $\sigma, \tau \succeq \rho$ and with lowest common ancestor at level in the interval $[M_0 + \dots + M_{i-1}, M_0 + \dots + M_i)$ for some $i \leq 2n-2$, since $\sigma, \tau \succeq \rho$, we have necessarily $i \geq k$. If $i > k$, then let $\rho' \in P_{k+1} \cap S'$ be the common prefix of $\sigma$ and $\rho$ in $P_{k+1}$, by definition of $g^k$, $g^{k+1}(\rho') = (c_{k+1}, \dots, c_{2n-2})$ and by the induction hypothesis, $f(\sigma, \rho) = c_i$. If $i = k$, then let $\sigma_{k+1}, \dots, \sigma_{2n-2}$ and $\tau_{k+1}, \dots, \tau_{2n-2}$ be the prefixes of $\sigma$ and $\tau$ belonging to $P_{k+1}, \dots, P_{2n-2}$ respectively, then $c_k = f^{k+1}(\sigma_{k+1}, \tau_{k+1}) = f^{k+2}(\sigma_{k+2}, \tau_{k+2}) = \dots = f^{2n-2}(\sigma_{2n-2}, \tau_{2n-2}) = f(\sigma, \tau)$. This completes the proof of the claim.
\bigskip

Thus, $P_0$ is a singleton (the root of $T'$), and the value taken by $g^0$ on its element is a $(2n-1)$-tuple $(c_0, \dots, c_{2n-2}) \in 2^{2n-1}$, such that, for $\sigma,\tau$ two leaves of $S'$ with lowest common ancestor at a level in the interval $[M_0 + \dots + M_{i-1}, M_0 + \dots + M_i)$, $f(\sigma, \tau) = c_i$.

$S'$ is a binary tree of depth $x \times (2n-1)$, and by grouping $x$ successive $2$-branching into one, $S'$ can be seen as a $2^x$-branching subtree of $T$ of depth $2n-1$. As $2^x \geq x$, we can trim $S'$ and consider it to be $x$-branching. By the finite pigeonhole principle, there exists some $c < 2$ such that $c_i = c$ for $n$ values of $i < 2n-1$, hence, by only keeping the nodes of $S'$ at those levels (and the final level so that $\Leaves(S) \subseteq \Leaves(T)$), there exists some subtree $S \subseteq S'$, isomorphic to $(x^{\leq n},\preceq)$, such that $f$ is constant on the leaves of $S$.
\end{proof}

\subsection{Grouping principle}

Patey and Yokoyama~\cite{patey2018proof} defined a notion of \qt{grouping} as follows: given a coloring $f : [\NN]^2 \to k$, and two notions of largeness $\L_0, \L_1$, a collection of sets $F_0 < \dots < F_{s-1}$ is an \emph{$(\L_0, \L_1)$-grouping for $f$} if (1) each $F_i$ is $\L_0$-large, (2) $\{ \max F_i : i < s \}$ is $\L_1$-large, and (3) for every $i < j < s$ and $x_0, x_1 \in F_i$ and $y_0, y_1 \in F_j$, $f(x_0, y_0) = f(x_1, y_1)$. In other words, the behavior of~$f$ over different $F$'s depends only on the indices.
This notion was very useful to give an inductive construction of $\EM$-$\bbomega^n$-largeness. Explicit bounds were later computed by Ko{\l}odziejczyk and Yokoyama~\cite{kolo2020some}.

The notion of grouping does not directly translate to largeness${}^*(\theta)$ because of the induced tree structure at every level, but the following lemma can be considered as its counterpart, and plays a similar role in the construction of an $\EM$-$\bbomega^n$-large${}^*(\theta)$ set.

\begin{lemma}\label[lemma]{lem:new-stabilize-bounds}
    Let $X \subseteq_{fin} \NN$ be $\bbomega^{n+2} \cdot (d, \shortbrace{n}, d)$-large${}^*(\theta)$ and exp-sparse for some $d \in \NN$ such that $2^{d^n} \leq \min X$ and let $f : [X]^2 \to 2$ be a coloring. 
    
    Then, there exists an $\bbomega^n \cdot (d, \shortbrace{n}, d)$-large${}^*(\theta)$ subset $Y$ of $X$, such that, for $(Y_{\sigma})_{\sigma \in d^n}$ the decomposition of $Y$ into $\bbomega^n$-large${}^*(\theta)$ blocs, there exists a coloring $g : [d^n]^2 \to 2$ such that $f(x,y) = g(\sigma, \tau)$ for every $x \in Y_{\sigma}$ and $y \in Y_{\tau}$ for some $\sigma, \tau \in d^n$ with $\sigma \neq \tau$.
\end{lemma}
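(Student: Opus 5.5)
We read $X$ through \Cref{lem:product-is-tree} as a $d$-regular tree of depth $n$ whose leaves $(X_\sigma)_{\sigma\in d^n}$ are pairwise $\theta$-apart $\bbomega^{n+2}$-large${}^*(\theta)$ sets. The goal is to shrink each leaf to an $\bbomega^n$-large${}^*(\theta)$ subset $Y_\sigma\subseteq X_\sigma$ so that, for $\sigma\neq\tau$, the color $f(x,y)$ with $x\in Y_\sigma$, $y\in Y_\tau$ depends only on $(\sigma,\tau)$. Because the $Y_\sigma$ keep the same positions in the tree and the required $\theta$-apartness is inherited from subsets, the family $(Y_\sigma)$ again witnesses $\bbomega^n\cdot(d,\shortbrace{n},d)$-largeness${}^*(\theta)$ by \Cref{lem:product-is-tree}. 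So the whole task is the stabilization. We process the $N=d^n$ leaves one at a time in lexicographic order, say $\sigma_0<_{lex}\dots<_{lex}\sigma_{N-1}$, and spend one level of largeness on the "forward" colors and one on the "backward" colors. This uses both extra exponents $n+2\to n$, and the bound $2^{d^n}\le\min X$ supplies the number of colors.

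The key steps are as follows.

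\begin{enumerate}
\item \emph{Backward stabilization.} For each $y\in X_{\sigma_j}$, consider its color vector toward the earlier blocks. We cannot fix the vector $(f(x,y))_{x\in\bigcup_{i<j}X_{\sigma_i}}$ itself, since that has too many colors. Instead we proceed in stages.
\item \emph{Recursion from the last leaf down.} For $j=N-1,N-2,\dots,0$, we maintain the invariant that blocks $\sigma_{j+1},\dots,\sigma_{N-1}$ are already shrunk to $\bbomega^{n+1}$-large${}^*(\theta)$ (then later $\bbomega^n$) sets. Each $x\in X_{\sigma_j}$ gets the color $c(x)=(c_{j+1}(x),\dots)$, where $c_i(x)$ is the eventual color of $x$ toward block $i$. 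To make this a single color per block, we first apply \Cref{cor:largeness-partition-verysimple} inside each later block $Y_{\sigma_i}$ to the coloring $y\mapsto f(x,y)$, which costs one level.
\item \emph{Avoiding compounding costs.} The previous step would spend one level per $x$, which is too much. So we arrange the order differently: go forward and use the colorings $y\mapsto (f(x,y))_{x\in \text{earlier chosen representatives}}$. The cleaner route is a two-pass scheme. In the first pass, going backward from the last block, we shrink $X_{\sigma_j}$ to an $\bbomega^{n+1}$-large${}^*(\theta)$ set $Z_{\sigma_j}$ on which the map $x\mapsto(f(x,\cdot)$ restricted to the finitely many already-fixed later blocks' minimal data$)$ is constant. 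This uses \Cref{cor:largeness-partition-verysimple} with at most $2^{N}\le\min X$ colors, which is where $2^{d^n}\le\min X$ and exp-sparsity enter.
\item \emph{Second pass.} Going forward, we shrink each $Z_{\sigma_j}$ to an $\bbomega^n$-large${}^*(\theta)$ set $Y_{\sigma_j}$ homogeneous for $y\mapsto(f(x_i,y))_{i<j}$, where $x_i$ is a representative of the already-stabilized earlier blocks. This uses at most $2^N$ colors again.
\end{enumerate}

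The main obstacle is making the color vectors well-defined with only $2^{d^n}$ colors. Stabilizing $f(x,y)$ with respect to a single representative on one side does not a priori give independence from the representative on the other side. We would handle this by choosing, in the first pass, the colors of $x$ against entire later blocks only after those blocks have been made homogeneous from the left. In other words, we interleave the two passes so that, when block $j$ is treated, every later block already has the property that $f(x,\cdot)$ is constant on it for each $x$ in the relevant earlier range. This should be obtainable by treating the pair (block, column) inductively, applying \Cref{prop:largeness-partition} with $a\le 2^{d^n}$ and the exp-sparse hypothesis to absorb the product structure. We expect the bookkeeping here, and checking that only two exponents are consumed, to be the delicate part.
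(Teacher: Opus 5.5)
Your reduction to shrinking each leaf $X_\sigma$ inside the tree of \Cref{lem:product-is-tree} is fine. The stabilization step, however, has a genuine gap, and it comes from step~1. The full backward vector $(f(y,x))_{y\in\bigcup_{i<j}X_{\sigma_i}}$ does \emph{not} have too many colors.

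The sets $X_{\sigma_i}$ for $i<j$ are disjoint subsets of $[0,\max X_{\sigma_{j-1}}]$. So this vector, together with the forward vector $(f(x,\min Y_{\sigma_i}))_{i>j}$ (fewer than $d^n<\min X$ coordinates), has at most $\max X_{\sigma_{j-1}}$ coordinates. Exp-sparsity then gives $2^{\max X_{\sigma_{j-1}}}<4^{\max X_{\sigma_{j-1}}}<\min X_{\sigma_j}$, and for $j=0$ the hypothesis $2^{d^n}\le\min X$ does the job. This is exactly what exp-sparsity is there for.

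The paper therefore goes through the blocks once, from the lexicographically last to the first. It applies \Cref{cor:largeness-partition-verysimple} a single time to $X_{\sigma_j}$ with the combined coloring $x\mapsto\bigl((f(y,x))_{y\in\bigcup_{i<j}X_{\sigma_i}},(f(x,\min Y_{\sigma_i}))_{i>j}\bigr)$, which yields an $\bbomega^n$-large${}^*(\theta)$ homogeneous $Y_{\sigma_j}$. Earlier blocks are afterwards only replaced by subsets of the original $X_{\sigma_i}$. Hence for $i<j$, $x,x'\in Y_{\sigma_i}$ and $y,y'\in Y_{\sigma_j}$ one gets $f(x,y)=f(x,\min Y_{\sigma_j})=f(x',\min Y_{\sigma_j})=f(x',y')$.

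The two-pass scheme you actually write down does not work, for two reasons.

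\textbf{Budget.} \Cref{cor:largeness-partition-verysimple} consumes two exponents per application. It therefore cannot turn the $\bbomega^{n+2}$-large${}^*(\theta)$ block into the $\bbomega^{n+1}$-large${}^*(\theta)$ set $Z_{\sigma_j}$ your first pass needs. With this tool, two successive passes would require $\bbomega^{n+4}$-large${}^*(\theta)$ blocks. So the ``one level for forward colors, one for backward colors'' accounting is wrong, and the two colorings must be merged into one application.

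\textbf{Coherence.} As you partly notice, the passes do not fit together. The first pass stabilizes against minima of the $Z$-blocks, which the second pass may discard. The second pass only stabilizes against one representative $x_i$ per earlier block, so the chain $f(x,y)=f(x',y')$ cannot be closed.

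The repair you suggest at the end is that each later block be constant for $f(x,\cdot)$ for every $x$ in the whole earlier range. That is exactly the full backward vector you ruled out in step~1. Once you see it is affordable by exp-sparsity, no interleaving or appeal to \Cref{prop:largeness-partition} is needed.
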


\begin{proof}
Let $(X_{\sigma})_{\sigma \in d^n}$ be a decomposition of $X$ into $\bbomega^{n+2}$-large${}^*(\theta)$ blocs. Consider the sequence $(f_{\sigma})_{\sigma \in d^n}$ of colorings and the sequence $(Y_{\sigma})_{\sigma \in d^n}$ of sets defined inductively as follows:

Let $\sigma \in d^n$ and assume that $Y_{\tau}$ have been defined for every $\tau >_{lex} \sigma$, and consider the two colorings $f_{\sigma}^0 : X_{\sigma} \to 2^{\sum_{\tau <_{lex} \sigma} |X_{\tau}|}$ and $f_{\sigma}^1 : X_{\sigma} \to 2^{|\{\tau \in d^n : \tau >_{lex} \sigma\}|}$ defined respectively by 
$$f_{\sigma}^0(x) = (f(y,x))_{y \in \bigcup_{\tau <_{lex} \sigma} X_{\tau}} \mbox{ and } f_{\sigma}^1(x) = (f(x,y))_{y \in \{\min Y_{\tau} : \tau >_{lex} \sigma\}}$$ 
for every $x \in X_{\sigma}$. Finally, let 
$$f_{\sigma} : X_{\sigma} \to 2^{\sum_{\tau <_{lex} \sigma} |X_{\tau}| + |\{\tau \in d^n : \tau >_{lex} \sigma\}|}$$
be defined by $f_{\sigma}(x) = (f^0_{\sigma}(x),f^1_{\sigma}(x))$.
As $\min X \geq 2^{d^n}$ and as the $X_{\tau}$ are disjoint, we get that $\sum_{\tau <_{lex} \sigma} |X_{\tau}| + |\{\tau : \tau \in d^n \wedge \tau >_{lex} \sigma\} < |[0, \min X) \cup \bigcup_{\tau <_{lex} \sigma} X_{\tau}|$, hence 
$$2^{\sum_{\tau <_{lex} \sigma} |X_{\tau}| + |\{\tau \in d^n : \tau >_{lex} \sigma\}|} \leq \min X_{\sigma}$$
by exp-sparsity of $X$.
Then, by \Cref{cor:largeness-partition-verysimple} there exists some $\bbomega^n$-large${}^*(\theta)$ and $f_{\sigma}$-homogeneous subset $Y_{\sigma} \subseteq X_{\sigma}$.
By definition of $(Y_{\sigma})_{\sigma \in d^n}$, for $\sigma <_{lex} \tau$ and $x,x' \in Y_{\sigma}$ and $y,y' \in Y_{\tau}$, $f(x,y) = f(x, \min Y_{\tau}) = f(x',\min Y_{\tau}) = f(x',y')$.
\end{proof}

\subsection{Erd\H{o}s-Moser theorem}

Thanks to the partition theorem on trees (\Cref{prop:partition-theorem-coloring-products}) and the grouping-like  \Cref{lem:new-stabilize-bounds},
we can now prove a central theorem in this article, namely, a polynomial bound for $\EM$-$\bbomega^{k}$-largeness${}^*(\theta)$. In \Cref{subsect:rt22-new-largeness}, it will be combined with the existing known polynomial bound for $\ADS$-$\bbomega^k$-largeness${}^*(\theta)$ and obtain a bound for $\RT^2_2$-$\bbomega^k$-largeness${}^*(\theta)$.

\begin{theorem}\label[theorem]{thm:em-largeness-star-theta-bound}
There is a primitive recursive function~$g_{\ref{thm:em-largeness-star-theta-bound}} : \NN \to \NN$ such that if $X \subseteq \NN$ is $\bbomega^{6k}$-large${}^*(\theta)$ and $g_{\ref{thm:em-largeness-star-theta-bound}}$-sparse, then it is $\EM$-$\bbomega^{k}$-large${}^*(\theta)$.
\end{theorem}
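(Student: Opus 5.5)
We argue by induction on $k$, proving a slightly stronger statement that also produces a transitive subset which is $\bbomega^{k}$-large${}^*(\theta)$ inside any $\bbomega^{6k}$-large${}^*(\theta)$ set with $g$-sparsity, where $g$ is a fixed primitive recursive function dominating $x \mapsto 4^x$ and the tree-Ramsey bounds $B_{\ref{prop:partition-theorem-coloring-products}}(k, x)$ in a suitable sense. Take $g$ of the form $x \mapsto 2^{2^{B_{\ref{prop:partition-theorem-coloring-products}}(x,x)}}$ composed with a few exponentials. The base case $k=0$ is trivial, since any singleton is transitive.

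For the inductive step $k \to k+1$, let $X$ be $\bbomega^{6(k+1)}$-large${}^*(\theta)$ and sparse, and let $f : [X]^2 \to 2$. Set $m = \min X$.

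First, unfold the definition once: $X \setminus \{m\}$ is $\bbomega^{6k+5}\cdot(m,\shortbrace{6k+6},m)$-large${}^*(\theta)$. Using sparsity, as in the discussion of \Cref{sec:sparsity}, we may pass to a subtree with branching $d$ large compared to $B = B_{\ref{prop:partition-theorem-coloring-products}}(k+1, m)$. Here the leaves are $\bbomega^{6k+5}$-large${}^*(\theta)$ blocks, and the depth is at least $2k+1$, with further $\bbomega^{6k+2}\cdot(d,\dots,d)$-large structure to spare.

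Second, apply \Cref{lem:new-stabilize-bounds} (with $n$ replaced by the relevant depth, after absorbing two exponents of $\bbomega$). This yields a subfamily of blocks $Y_\sigma$, each $\bbomega^{6k}$-large${}^*(\theta)$ (reserving the $+2$ for the lemma), indexed by a $B$-regular tree of depth $2k+1$. The colour $f(x,y)$ for $x\in Y_\sigma$, $y\in Y_\tau$ then depends only on $(\sigma,\tau)$, via some coloring $h:[B^{2k+1}]^2\to 2$.

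Third, apply \Cref{prop:partition-theorem-coloring-products} to $h$. This gives an $m$-branching subtree of depth $k+1$ whose leaves are $h$-homogeneous of some colour $c$. The inherited $\theta$-apartness at every node is preserved by \Cref{lem:product-is-tree}, since subsets of $\theta$-apart sets stay $\theta$-apart.

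Fourth, inside each selected leaf block $Y_\sigma$, which is $\bbomega^{6k}$-large${}^*(\theta)$ and sparse, apply the induction hypothesis to obtain an $f$-transitive $\bbomega^{k}$-large${}^*(\theta)$ subset $Z_\sigma$. Let $Z = \{m'\}\cup\bigcup_\sigma Z_\sigma$, where $m'$ is an element below all blocks with $m' \le m$, or simply the union if the tree is indexed by $\min$ of the union. Transitivity of $Z$ without the extra point holds because any triple either lies in one block (handled by induction), or spans two or three blocks. In the latter case at least two of its pairs are cross-block pairs, which have colour $c$. A triple $x<y<z$ with $x,y$ in the same block and $z$ in another has $f(x,z)=f(y,z)=c$, so transitivity holds. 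A triple with $x$ in one block and $y,z$ in a later block has $f(x,y)=f(x,z)=c$, which is again fine. The minimum element needs care: we can simply take as $\min$ the minimum of the first leaf, so the tree shape $\bbomega^{k}\cdot(\min,\shortbrace{k+1},\min)$ matches the definition of $\bbomega^{k+1}$-large${}^*(\theta)$, since branching $m \ge$ the new minimum.

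The main obstacle is the bookkeeping between the exponent $6(k+1)$ and what is consumed. We need one level to unfold, two for \Cref{lem:new-stabilize-bounds}, and the tree depth $2k+1$ demanded by \Cref{prop:partition-theorem-coloring-products}, which must be realized inside the $\bbomega^{\cdot}$-tower by converting $\bbomega^{6k+5}$ into $\bbomega^{6k+2}\cdot(d,\shortbrace{2k+1},d)$ via repeated unfolding. Each unfolding yields only $\min$-branching plus extra depth, so reaching $B$-branching requires sparsity to compensate. We must also check that the hypothesis $2^{d^n}\le\min X$ of \Cref{lem:new-stabilize-bounds} holds, by again choosing $g$ fast enough. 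Matching these costs so that the total is at most $6$ per step is where we expect to need care.
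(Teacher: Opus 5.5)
There is a genuine gap at the minimum of the final set. Your architecture matches the paper's: unfold, stabilize with \Cref{lem:new-stabilize-bounds}, extract a homogeneous regular subtree with \Cref{prop:partition-theorem-coloring-products}, recurse on the leaf blocks, add a minimum. Your case analysis for triples inside $\bigcup_\sigma Z_\sigma$ is also correct. The problem is the point $m'$ you add below all blocks, and you cannot avoid adding one. The branching of the extracted tree is far below the elements of its leaf blocks; your claim ``branching $m \geq$ the new minimum'' is reversed, since every block lies above $m$. Moreover, removing $\min Z_{\sigma_0}$ would spoil the first leaf's $\bbomega^k$-largeness${}^*(\theta)$. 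Once $m'$ is added, nothing in your construction controls $f(m',\cdot)$, because \Cref{lem:new-stabilize-bounds} only stabilizes colors between blocks. For $y<z$ in different leaf blocks you have $f(y,z)=c$. So if $f(m',y)=c\neq f(m',z)$, the triple $m'<y<z$ violates transitivity.

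The paper closes this with an extra pigeonhole step, which also settles your bookkeeping.
\begin{enumerate}
\item It takes $\bbomega^{6k+5}$-large${}^*(\theta)$ sets $X_0<X_1$ and sets $x_0=\max X_0$, so $x_0>6k$.
\item It applies \Cref{cor:largeness-partition-verysimple} to $y\mapsto f(x_0,y)$ on $X_1$. This gives an $\bbomega^{6k+3}$-large${}^*(\theta)$ set $Y_1$ on which $f(x_0,\cdot)$ is constant.
\item Only then does it unfold $Y_1$. By sparsity, $\min Y_1>g(x_0)$, so the unfolded tree already has branching $\min Y_1$. This is enough to stabilize and then shrink back to an $x_0$-branching homogeneous tree, with $x_0$ as the new minimum.
\end{enumerate}
You cannot raise the branching by ``passing to a subtree'' of the $m$-branching unfolding of $X$. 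You must unfold a block lying above $g$ of the new minimum.

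The choice $x_0=\max X_0$ rather than $m=\min X$ also matters. $\min X$ can be $3$ while $k$ is arbitrary, so no one-variable sparsity function $g(m)$ can dominate $B_{\ref{prop:partition-theorem-coloring-products}}(k+1,m)$, or the $2^{d^{2k+1}}$ hypothesis of \Cref{lem:new-stabilize-bounds}. With $x_0>6k$, $g(x)$ may bound these quantities for all depths up to $x$. The resulting budget is $6k+6\to 6k+5\to 6k+3$ (pigeonhole) $\to 6k+2$-large leaves on $6k+3$ levels (unfold; depth $2k+1$ suffices) $\to 6k$ (stabilization) $\to k$ (induction).
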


\begin{proof}
Let $g_{\ref{thm:em-largeness-star-theta-bound}}$ be the map $n \mapsto B_{\ref{prop:partition-theorem-coloring-products}}(n, n)$.
We proceed by induction on $k$. The case $k = 0$ is immediate.

Assume the property to be true for some $k \in \NN$. Let $X$ be an $\bbomega^{6(k+1)}$-large${}^*(\theta)$ set and $f : [X]^2 \to 2$ be a coloring. There exist two $\bbomega^{6k+5}$-large${}^*(\theta)$ subsets $X_0 < X_1$ of $X$. Let $x_0 = \max X_0$. By \Cref{cor:largeness-partition-verysimple}, there exists some $\bbomega^{6k+3}$-large${}^*(\theta)$ subset $Y_1 \subseteq X_1$ such that $f(x_0, y) = f(x_0, y')$ for every $y,y' \in Y_1$.

Note that $x_0 > 6k$ since $x_0 = \max X_0$ which is $\bbomega^{6k+5}$-large${}^*(\theta)$. By $g_{\ref{thm:em-largeness-star-theta-bound}}$-sparsity of~$X$, $B_{\ref{prop:partition-theorem-coloring-products}}(x_0, k) < g_{\ref{thm:em-largeness-star-theta-bound}}(x_0) < \min Y_1$. Thus $Y_1$ is 
$$\bbomega^{6k+2}\cdot(B_{\ref{prop:partition-theorem-coloring-products}}(x_0, k), \shortbrace{6k}, B_{\ref{prop:partition-theorem-coloring-products}}(x_0, k))\text{-large}{}^*(\theta)$$ 
and by \Cref{lem:new-stabilize-bounds}, there exists an $\bbomega^{6k}\cdot(B_{\ref{prop:partition-theorem-coloring-products}}(x_0, k), \shortbrace{6k}, B_{\ref{prop:partition-theorem-coloring-products}}(x_0, k))$-large${}^*(\theta)$ subset $Z_1$ of $Y_1$ such that, for $(Y_{\sigma})_{\sigma \in B_{\ref{prop:partition-theorem-coloring-products}}(x_0, k)^{6k}}$ its decomposition into $\bbomega^{6k}$-large${}^*(\theta)$ blocks, the value of $f(x,y)$ for $x \in Y_{\sigma}$ and $y \in Y_{\tau}$ only depends on the indexes $\sigma, \tau$ when $\sigma \neq \tau$.

By \Cref{lem:product-is-tree}, we can view $Z_1$ as a $B_{\ref{prop:partition-theorem-coloring-products}}(x_0, k)$-branching tree $T$ of depth $6k$ with leaves labelled with the $Y_{\sigma}$. The coloring $f$ induces a coloring~$\hat f$ on the pairs of leaves of that tree, and, by \Cref{prop:partition-theorem-coloring-products}, as $6k \geq 2k$ there exists a subtree $S \subseteq T$ such that $(S, \preceq) \cong (x_0^{\leq k}, \preceq)$, $\Leaves(S) \subseteq \Leaves(T)$, and such that $\Leaves(S)$ is homogeneous for~$\hat f$. 

We can then apply the inductive hypothesis on each $Y_{\sigma}$, for $\sigma \in \Leaves(S)$ and obtains $\bbomega^{k}$-large${}^*(\theta)$ subsets $Z_{\sigma} \subseteq Y_{\sigma}$ that are $f$-transitive. Let $H = \{x_0\} \cup \bigcup_{\sigma \in \Leaves(S)} Z_{\sigma}$, by construction $H$ is $\bbomega^{k+1}$-large${}^*(\theta)$ as every $Z_{\sigma}$ is $\bbomega^{k}$-large${}^*(\theta)$ and as $S$ is a subtree of $T$.

We claim that the set $Z$ is $f$-transitive. Indeed, pick by contradiction some $x < y < z \in Z$ such that $f(x,y) = f(y,z)$ and $f(x,y) \neq f(x,z)$. We cannot have $x = x_0$, as $f(x_0,y) = f(x_0,z)$ for every $y,z \in Y_1$. We cannot have $x,y,x \in Z_{\sigma}$ for some $\sigma \in \Leaves(S)$ by our application of the inductive hypothesis. We cannot have $x,y \in Z_{\sigma_0}$ and $z \in Z_{\sigma_1}$ for some $\sigma_0 <_{lex} \sigma_1 \in \Leaves(S)$ by our application of \Cref{lem:new-stabilize-bounds}, and similarly we cannot have $x \in Z_{\sigma_0}$ and $y,z \in Z_{\sigma_1}$ for some $\sigma_0 < \sigma_1 \in \Leaves(S)$. Finally, we cannot have $x \in Z_{\sigma_0}$, $y \in Z_{\sigma_1}$ and $z \in Z_{\sigma_2}$ for some $\sigma_0 <_{lex} \sigma_1 <_{lex} \sigma_2 \in \Leaves(S)$, as by our application \Cref{lem:new-stabilize-bounds} we have $f(x,y) = \hat f(\sigma_0, \sigma_1)$, $f(y,z) = \hat f(\sigma_1, \sigma_2)$ and $f(x,z) = \hat f(\sigma_0, \sigma_2)$ and by \Cref{prop:partition-theorem-coloring-products}, $\hat f$ is homogeneous on $\Leaves(S)$. Hence, every case is impossible and $Z$ is $f$-transitive. This completes our claim and the proof of \Cref{thm:em-largeness-star-theta-bound}.
\end{proof}

\subsection{Ramsey's theorem for pairs}\label[section]{subsect:rt22-new-largeness}

Le Houérou, Levy Patey and Yokoyama~\cite[Proposition 5.4]{houerou2026pi} proved that every $\bbomega^{4n+4}$-large$(\theta)$ set is $\ADS$-$\bbomega^n$-large$(\theta)$, using the theorem of Ketonen and Solovay~\cite{ketonen1981rapidly}. By replacing its use with \Cref{thm:better-ketonen-solovay-bound}, this yields the following bound for $\ADS$:

\begin{proposition}[Le Houérou, Levy Patey and Yokoyama~\cite{houerou2026pi} revisited, $\ISig_1$]\label[proposition]{prop:ads-old-largeness}
Let $n \geq 1$. If $F \finsub \NN$ is $\bbomega^{4n} \cdot 2$-large$(\theta)$ and $(x \mapsto 2x^{x^2})$-sparse, then it is $\ADS$-$\bbomega^n$-large$(\theta)$.
\end{proposition}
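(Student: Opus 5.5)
We re-run the argument of \cite[Proposition 5.4]{houerou2026pi} verbatim, changing only the base case. There, $\ADS$-$\bbomega^n$-largeness$(\theta)$ is proved by induction on~$n$. At each step the input set is split, using the deconstruction lemma for largeness$(\theta)$ \cite[Lemma 2.5]{houerou2023conservation}, into pairwise $\theta$-apart blocks $X_0<\dots<X_{k-1}$ of some largeness, such that any transversal is $\bbomega^{j}$-large$(\theta)$. The transitive coloring $f$ is then controlled on the blocks by an application of $\RT^2_k$-$\bbomega$-largeness (a coloring of the block indices). Finally the inductive hypothesis is applied inside the chosen blocks, and the resulting homogeneous pieces are reassembled into an $\bbomega^{n}$-large$(\theta)$ homogeneous set. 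The reassembly uses the fact that a homogeneous sequence of blocks that is $\bbomega$-large and pairwise $\theta$-apart glues into one more level of largeness$(\theta)$. The theorem of Ketonen and Solovay enters only in the step ``$\bbomega^{k+4}$-large implies $\RT^2_k$-$\bbomega$-large''. This is the source of the additive $+4$ in the exponent $4n+4$.

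The plan is to keep every step of that induction and replace this single invocation with \Cref{thm:better-ketonen-solovay-bound}. That result only requires $\bbomega^{k}\cdot 2$-largeness, in the sense of Ketonen and Solovay, together with $(x\mapsto 2x^{x^2})$-sparsity. Every $\bbomega^m$-large$(\theta)$ set is $\bbomega^m$-large, and sparsity passes to subsets. So the hypotheses of \Cref{thm:better-ketonen-solovay-bound} are met by any set of maxima or transversal extracted from $F$. There is one exception: a transversal need not inherit sparsity from $F$ when it picks non-maximal points. To handle this, I would always take the set of maxima $\{\max X_i\}$, which is a genuine subset of $F$.

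With this change, I would redo the exponent bookkeeping. Each inductive step consumes $4$ levels of $\bbomega$ (as in the original) rather than an extra additive constant. The final top-level split needs only the factor $\cdot 2$ coming from \Cref{thm:better-ketonen-solovay-bound}, instead of $4$ extra exponents. This yields $\bbomega^{4n}\cdot 2$ for $n\geq 1$, with the base case $n=1$ handled directly: $\ADS$-$\bbomega$-largeness follows from \Cref{thm:better-ketonen-solovay-bound} with $k=2$ applied to an $\bbomega^{4}\cdot 2$-large set. The whole argument formalizes in $\ISig_1$, as in the original, since all constructions are finite and bounded.

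I expect the main obstacle to be checking that the hypothesis $\min F > k$ (number of colors), implicit in \Cref{thm:better-ketonen-solovay-bound} (see its footnote), and the sparsity hypothesis hold at the point of application inside the induction. I would handle this by applying the theorem to the set of block maxima, which inherits sparsity from $F$. The required lower bound on its minimum follows because $\max X_0$ is already $\bbomega$-large-sized relative to the number of colors used.
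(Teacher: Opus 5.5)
Your overall strategy is the paper's: its whole justification is that the proof of \cite[Proposition 5.4]{houerou2026pi} goes through once the Ketonen--Solovay bound is replaced by \Cref{thm:better-ketonen-solovay-bound}. However, the justification you add for why this yields exactly $\bbomega^{4n}\cdot 2$ is inconsistent, and one step fails.

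First, you place the invocation of the $\RT^2_k$-$\bbomega$ bound in three incompatible places: at every inductive step on a colouring of block indices, at a single ``final top-level split'', and in the base case $n=1$ with $k=2$. The first model cannot even reproduce the original bound. Since $k\geq 2$, each use of ``$\bbomega^{k+4}$-large implies $\RT^2_k$-$\bbomega$-large'' costs at least six exponent levels, so invoking it at every step gives roughly $\bbomega^{6n}$, not $\bbomega^{4n+4}$. Under that model the substitution would also change every step, each needing its own factor $\cdot 2$ and sparsity of a set of block maxima. So ``each step consumes 4 levels and only the top split changes'' is reverse-engineered from the target, not computed. The arithmetic of the statement indicates the actual mechanism: the bound is used once, on $F$ itself, with $k=4n$ colours. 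Then $\bbomega^{4n+4}$ becomes $\bbomega^{4n}\cdot 2$, the condition $k\geq 2$ is exactly $n\geq 1$, and the sparsity hypothesis is the one on $F$. Concretely, $F$ is $\bbomega^{4n}\cdot 2$-large$(\theta)$, hence $\bbomega^{4n}\cdot 2$-large, and $(x\mapsto 2x^{x^2})$-sparse, so it is $\RT^2_{4n}$-$\bbomega$-large, and the cited argument continues unchanged. That is what the paper's one-line remark amounts to.

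Second, the one step you spell out, the base case, fails as written. \Cref{thm:better-ketonen-solovay-bound} with $k=2$ gives an $f$-homogeneous $H$ that is $\bbomega$-large only in Ketonen and Solovay's sense. $\bbomega$-largeness$(\theta)$ additionally requires $\min H$ many pairwise $\theta$-apart elements in $H\setminus\{\min H\}$, and no plain Ramsey bound supplies that. In the $\theta$-setting, apartness has to come from the construction in the cited proof (how its colouring and gluing are designed). So nowhere can you treat plain $\RT^2_k$-$\bbomega$-largeness as directly giving largeness$(\theta)$.

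Two minor points. \Cref{thm:better-ketonen-solovay-bound} as stated has no hypothesis $\min F>k$; the factor $\cdot 2$ already absorbs it. And a transversal can fail to be sparse because it may contain points outside $F$, not because it picks non-maximal points.
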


Thanks to the translation between largeness$(\theta)$ and largeness${}^*(\theta)$, one obtains a polynomial bound for $\ADS$
in terms of largeness${}^*(\theta)$:

\begin{corollary}\label[corollary]{cor:ads-new-largeness}
Let $n \geq 1$. If $F \finsub \NN$ is $\bbomega^{2n^2+2n} \cdot 2$-large${}^*(\theta)$ and $(x \mapsto 2x^{x^2})$-sparse, then it is $\ADS$-$\bbomega^n$-large${}^*(\theta)$.
\end{corollary}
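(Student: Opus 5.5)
We need to prove: if $F$ is $\bbomega^{2n^2+2n}\cdot 2$-large${}^*(\theta)$ and $(x\mapsto 2x^{x^2})$-sparse, then $F$ is $\ADS$-$\bbomega^n$-large${}^*(\theta)$. The plan is to sandwich \Cref{prop:ads-old-largeness} between the two translations between the notions. \Cref{prop:large-to-new-large} turns old largeness into new largeness, and the remark after the definition of $L^\theta_n$ turns new largeness into old largeness.

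\textbf{Step 1 (new to old on the hypothesis).} Every $\bbomega^m$-large${}^*(\theta)$ set is $\bbomega^m$-large$(\theta)$, since the defining tree contains $\min X$ pairwise $\theta$-apart blocks at the top level. The same holds for $\bbomega^m\cdot 2$: the two $\theta$-apart $\bbomega^m$-large${}^*(\theta)$ pieces are $\bbomega^m$-large$(\theta)$. With $m=2n^2+2n=4\cdot\frac{n(n+1)}{2}$, $F$ is therefore $\bbomega^{4N}\cdot 2$-large$(\theta)$ for $N=\frac{n(n+1)}{2}$. Sparsity is a property of $F$ alone, so it carries over unchanged.

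\textbf{Step 2 (apply \Cref{prop:ads-old-largeness}).} Since $n\ge 1$ gives $N\geq 1$, the proposition applies with parameter $N$. For every transitive $f:[F]^2\to 2$ we obtain an $f$-homogeneous subset $H\subseteq F$ that is $\bbomega^{N}$-large$(\theta)$.

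\textbf{Step 3 (old to new on the conclusion).} By \Cref{prop:large-to-new-large}, $H$ is $\bbomega^{n}$-large${}^*(\theta)$. It is still homogeneous, so $F$ is $\ADS$-$\bbomega^n$-large${}^*(\theta)$.

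\textbf{Main obstacle.} The only delicate point is Step 1, namely checking that the inclusion of new into old largeness is really a formal induction and that it passes through the $\cdot 2$ (and, more generally, $\cdot k$) products. I would prove, by induction on $m$, that $L^\theta_m\cdot\sigma$ is contained in the corresponding old notion. The induction step needs the fact that a depth-$(m+1)$ tree of $\min X$-branching $\theta$-apart blocks yields $\min X$ pairwise $\theta$-apart top-level unions. Each such union contains an $\bbomega^m$-large${}^*(\theta)$ leaf, hence an $\bbomega^m$-large$(\theta)$ set. Subsets of $\theta$-apart sets remain $\theta$-apart, which completes the step.
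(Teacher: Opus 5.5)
Your proposal is correct and matches the paper's proof: you weaken the hypothesis from largeness${}^*(\theta)$ to largeness$(\theta)$, apply \Cref{prop:ads-old-largeness} with parameter $\frac{n(n+1)}{2}$, and convert the resulting homogeneous $\bbomega^{n(n+1)/2}$-large$(\theta)$ set back via \Cref{prop:large-to-new-large}. Your induction showing that largeness${}^*(\theta)$ implies largeness$(\theta)$, including through the $\cdot 2$ product, spells out the inclusion that the paper simply asserts after the definition of $L^\theta_n$.
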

\begin{proof}
Since $F$ is $\bbomega^{4\frac{n(n+1)}{2}} \cdot 2$-large${}^*(\theta)$, it is also $\bbomega^{4\frac{n(n+1)}{2}} \cdot 2$-large$(\theta)$,
so by \Cref{prop:ads-old-largeness}, it is $\ADS$-$\bbomega^{\frac{n(n+1)}{2}}$-large$(\theta)$. By \Cref{prop:large-to-new-large}, $F$ is $\ADS$-$\bbomega^n$-large${}^*(\theta)$.
\end{proof}

We are now ready to give a polynomial bound to $\RT^2_2$-$\bbomega^n$-largeness${}^*(\theta)$.

\begin{theorem}\label[theorem]{thm:rt22-largeness-star-theta-bound}
There is a primitive recursive function $g_{\ref{thm:rt22-largeness-star-theta-bound}} : \NN \to \NN$ such that
for every $n \in \NN$ and for every finite set $X \subseteq \NN$, if $X$ is $\bbomega^{12n^2 + 12n + 6}$-large${}^*(\theta)$ and $g_{\ref{thm:rt22-largeness-star-theta-bound}}$-sparse, then $X$ is $\RT_2^2$-$\bbomega^n$-large${}^*(\theta)$.
\end{theorem}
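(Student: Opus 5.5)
We will obtain the bound by composing the Erd\H{o}s-Moser bound with the $\ADS$ bound. Given a coloring $f : [X]^2 \to 2$, we first extract a large $f$-transitive subset $H$ using \Cref{thm:em-largeness-star-theta-bound}. The restriction of $f$ to $H$ is then a transitive coloring, and we apply \Cref{cor:ads-new-largeness} inside $H$ to obtain an $f$-homogeneous $\bbomega^n$-large${}^*(\theta)$ set. Let $m = 2n^2+2n+1$, so that $\bbomega^{2n^2+2n}\cdot 2$-largeness${}^*(\theta)$ follows from $\bbomega^m$-largeness${}^*(\theta)$. This uses $\min \geq 2$ together with the unfolding of the definition, which gives at least two pairwise $\theta$-apart $\bbomega^{m-1}$-large${}^*(\theta)$ blocks. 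Applying \Cref{thm:em-largeness-star-theta-bound} with $k = m$ requires $\bbomega^{6m} = \bbomega^{12n^2+12n+6}$-largeness${}^*(\theta)$, which is exactly the exponent in the statement.

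The concrete steps are as follows.
\begin{enumerate}
\item Define $g_{\ref{thm:rt22-largeness-star-theta-bound}}(x) = \max\bigl(g_{\ref{thm:em-largeness-star-theta-bound}}(x),\, 2x^{x^2}\bigr)$, which is primitive recursive. A $g_{\ref{thm:rt22-largeness-star-theta-bound}}$-sparse set is then both $g_{\ref{thm:em-largeness-star-theta-bound}}$-sparse and $(x\mapsto 2x^{x^2})$-sparse, and both properties pass to every subset.
\item Fix $X$ that is $\bbomega^{12n^2+12n+6}$-large${}^*(\theta)$ and $g_{\ref{thm:rt22-largeness-star-theta-bound}}$-sparse, together with a coloring $f:[X]^2\to 2$. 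By \Cref{thm:em-largeness-star-theta-bound}, there is an $f$-transitive subset $H \subseteq X$ that is $\bbomega^{m}$-large${}^*(\theta)$.
\item Check that $H$ is $\bbomega^{2n^2+2n}\cdot 2$-large${}^*(\theta)$. Since $H\setminus\{\min H\}$ lies in $L^\theta_{m-1}\cdot(\min H,\dots,\min H)$, it contains two pairwise $\theta$-apart $\bbomega^{m-1}$-large${}^*(\theta)$ blocks.
\item Apply \Cref{cor:ads-new-largeness} to $H$ with the transitive coloring $f\restriction[H]^2$. This yields an $f$-homogeneous $\bbomega^n$-large${}^*(\theta)$ subset of $H\subseteq X$.
\end{enumerate}

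Two edge cases need separate treatment. When $n=0$, any singleton is homogeneous and $\bbomega^0$-large${}^*(\theta)$, so the claim is trivial. \Cref{cor:ads-new-largeness} requires $n\ge 1$, and for $n \geq 1$ the argument above applies.

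The main point to verify is step 3. Specifically, a tree of blocks of depth $m$ must contain two disjoint $\theta$-apart $\bbomega^{m-1}$-large${}^*(\theta)$ subsets, and each block at depth $1$ in the tree provided by \Cref{lem:product-is-tree} must itself be in $L^\theta_{m-1}\cdot(\dots)$. The latter in turn requires that every set in $L\cdot\sigma$ contains an $L$-large subset. Both facts follow from the fact that every leaf is $L$-large and that $\theta$-apartness is inherited by subsets.

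A second point is that "$\RT^2_2$-large" is required for colorings into $2$ colors, not for all $k\le\min X$. We will also confirm that the sparsity hypotheses of the two cited results are only needed on $X$ and on $H$ respectively, which holds because sparsity is hereditary.
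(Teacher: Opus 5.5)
Your proposal is correct and matches the paper's proof: take $g(x)=\max\bigl(2x^{x^2},\, g_{\ref{thm:em-largeness-star-theta-bound}}(x)\bigr)$, use \Cref{thm:em-largeness-star-theta-bound} to extract an $f$-transitive $\bbomega^{2n^2+2n+1}$-large${}^*(\theta)$ subset, then apply \Cref{cor:ads-new-largeness} inside it. Your explicit checks are details the paper leaves implicit: that $\bbomega^{m}$-largeness${}^*(\theta)$ gives $\bbomega^{m-1}\cdot 2$-largeness${}^*(\theta)$ (by \Cref{lem:product-is-tree}, the depth-one blocks are themselves $L^\theta_{m-1}$-large and pairwise $\theta$-apart), and the trivial case $n=0$.
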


\begin{proof}
Let $g_{\ref{thm:rt22-largeness-star-theta-bound}} : \NN \to \NN$ be defined by $g_{\ref{thm:rt22-largeness-star-theta-bound}}(x) = \max(2x^{x^2}, g_{\ref{thm:em-largeness-star-theta-bound}}(x))$.
Fix a coloring $f : [X]^2 \to 2$. $X$ being $\bbomega^{6(2n^2 + 2n + 1)}$-large${}^*(\theta)$ and $g_{\ref{thm:rt22-largeness-star-theta-bound}}$-sparse, by \Cref{thm:em-largeness-star-theta-bound} there exists some $f$-transitive and $\bbomega^{2n^2 + 2n + 1}$-large${}^*(\theta)$ subset $Y \subseteq X$.
By \Cref{cor:ads-new-largeness}, there exists some $f$-homogeneous and $\bbomega^n$-large$^*(\theta)$ subset $Z \subseteq Y$. 
Thus, $X$ is $\RT_2^2$-$\bbomega^n$-large${}^*(\theta)$.
\end{proof}

\begin{corollary}\label[corollary]{cor:rt22-largeness-star-theta-bound-no-sparsity}
    There exists a polynomial $P \in \mathbb{Z}[X]$, such that, for every $n \in \NN$, if $X \subseteq \NN$ is an $\bbomega^{P(n)}$-large${}^*(\theta)$ finite set, then $X$ is $\RT_2^2$-$\bbomega^n$-large${}^*(\theta)$.
\end{corollary}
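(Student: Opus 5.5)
We derive the corollary by combining \Cref{thm:rt22-largeness-star-theta-bound} with \Cref{cor:new-large-g-sparse}, which trades the sparsity hypothesis for an additive constant in the exponent. Let $g = g_{\ref{thm:rt22-largeness-star-theta-bound}}$, which is primitive recursive. By \Cref{cor:new-large-g-sparse} there is a constant $c \in \NN$, depending only on $g$ and hence not on $n$ or $\theta$, with the following property: for every $m$, every $\bbomega^{m+c}$-large${}^*(\theta)$ set contains a $g$-sparse $\bbomega^{m}$-large${}^*(\theta)$ subset. We then set $P(n) = 12n^2 + 12n + 6 + c$.

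Now let $X$ be $\bbomega^{P(n)}$-large${}^*(\theta)$ and let $f : [X]^2 \to 2$ be a coloring. Applying the choice of $c$ with $m = 12n^2+12n+6$, we obtain a $g$-sparse $\bbomega^{12n^2+12n+6}$-large${}^*(\theta)$ subset $X' \subseteq X$. By \Cref{thm:rt22-largeness-star-theta-bound}, $X'$ is $\RT^2_2$-$\bbomega^n$-large${}^*(\theta)$. Applying this to the restriction $f \upharpoonright [X']^2$ yields an $f$-homogeneous $\bbomega^n$-large${}^*(\theta)$ subset of $X'$, and hence of $X$. Since $f$ was arbitrary, $X$ is $\RT^2_2$-$\bbomega^n$-large${}^*(\theta)$.

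The main point to check is the uniformity of $c$. \Cref{cor:new-large-g-sparse} quantifies the constant before $n$, via \Cref{cor:new-large-omega-k-sparse} together with \Cref{thm:large-dominates-pr}, and its proof does not depend on $\theta$. So $c$ really is a fixed constant, and $P$ is a genuine polynomial with integer coefficients that works for all $n$ simultaneously.

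A secondary point concerns the standing convention $\min X \geq 3$ and the exp-sparsity or minimum-size side conditions in the lemmas underlying \Cref{thm:rt22-largeness-star-theta-bound}. These are absorbed by $g$-sparsity, since one may enlarge $g$ pointwise while keeping it primitive recursive, or by bumping $c$ by a fixed amount, for instance to discard small initial elements. Neither adjustment changes the polynomial form of $P$.
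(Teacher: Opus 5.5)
Your proposal is correct and matches the paper's proof: the paper also takes the constant $k_0$ from \Cref{cor:new-large-g-sparse} applied to $g_{\ref{thm:rt22-largeness-star-theta-bound}}$, sets $P(X) = 12X^2+12X+6+k_0$, extracts a $g_{\ref{thm:rt22-largeness-star-theta-bound}}$-sparse $\bbomega^{12n^2+12n+6}$-large${}^*(\theta)$ subset, and applies \Cref{thm:rt22-largeness-star-theta-bound}. Your closing remark about side conditions is harmless but not needed, since those conditions are handled inside the theorem itself.
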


\begin{proof}
By \Cref{cor:new-large-g-sparse}, let $k_0 \in \NN$ be such that, for every $n \in \NN$, every $\bbomega^{n+k_0}$-large${}^*(\theta)$ set contains a $g_{\ref{thm:rt22-largeness-star-theta-bound}}$-sparse $\bbomega^n$-large${}^*(\theta)$ subset. 

Let $P(X) = 12X^2 + 12X + 6 + k_0$ and fix some $n \in \NN$. Let $X \subseteq \NN$ be $\bbomega^{P(n)}$-large${}^*(\theta)$, by definition of $k_0$, there exists some $\bbomega^{12n^2 + 12n + 6}$-large${}^*(\theta)$ and $g_{\ref{thm:rt22-largeness-star-theta-bound}}$-sparse subset $Y$ of $X$. By \Cref{thm:rt22-largeness-star-theta-bound}, $Y$ is $\RT_2^2$-$\bbomega^n$-large${}^*(\theta)$, thus $X$ is also $\RT_2^2$-$\bbomega^n$-large${}^*(\theta)$.
\end{proof}

Note that instead of translating the linear bound for $\ADS$-$\bbomega^n$-largeness$(\theta)$ (\Cref{prop:ads-old-largeness}) into a polynomial bound for $\ADS$-largeness${}^*(\theta)$ (\Cref{cor:ads-new-largeness}), and combine it with the linear bound for $\EM$-$\bbomega^n$-largeness${}^*(\theta)$ (\Cref{thm:em-largeness-star-theta-bound}) to obtain a polynomial bound for $\RT^2_2$-largeness${}^*(\theta)$ (\Cref{thm:rt22-largeness-star-theta-bound}), one could rather have translated the bound for $\EM$ into the other notion of largeness, and obtained a polynomial bound for $\RT^2_2$-largeness$(\theta)$. This shows that the previous notion of largeness$(\theta)$ still admits enjoyable properties in terms of closure under combinatorial theorems, but the largeness$(\theta)$ proofs do not seem to follow naturally the combinatorial proofs of the statements.

\section{Polynomial simulation of $\RCA_0 + \RT^2_2$}\label[section]{sec:translation}

In this section, we use the general framework of forcing interpretation as formulated in \cite[Section 1]{kolodziejczyk2023ramsey} to prove \Cref{thm:rt22-polynomially-simulated}. The general idea of a forcing interpretation extends that of an interpretation of a theory $T'$ from another theory $T$. Indeed, while an interpretation formalizes the construction of a model $T'$ inside a model of~$T$, a forcing interpretation formalizes the definition of a generic model of $T'$ from a model of~$T$ through forcing. 

As mentioned in \cite{kolodziejczyk2023ramsey}, contrary to most forcing translations which build a model $\bbomega$-extending the ground model, the main forcing interpretation of this series of translations formalizes the construction of a generic proper cut~$I \prec_e M$ and considers the model $(M, \Cod(I/M))$.

Due to the heavy formalism of forcing interpretation, we shall sacrifice self-containment for the sake of concision. We therefore expect the reader to be familiar with the constructions of \cite[Section 2]{kolodziejczyk2023ramsey}, and we are mostly going to emphasize the differences in the adaptation.

Our first change will concern the theory $\mathbb{I}$ given by \cite[Definition 2.7]{kolodziejczyk2023ramsey}, which we replace by a theory $\mathbb{I}'$, requiring the corresponding cut to be also closed under multiplication (the bound obtained in \Cref{cor:rt22-largeness-star-theta-bound-no-sparsity} being given by a polynomial in $n$ of degree $2$ as opposed to the bound $144n + O(1)$ obtained \cite{kolo2020some}), and taking into account the fact that our notion of largeness now depends on a parameter~$\theta$.

\begin{definition}
Let $\mathbb{I}'$ be the theory consisting of the following axioms:
\begin{itemize}
    \item[($\mathbb{I}1$)]: $\mathbb{I}$ is a nonempty proper cut in the first-order universe,
    \item[$(\mathbb{I}'2)$]: $\mathbb{I}$ is closed under addition and multiplication, 
    \item[$(\mathbb{I}'3)$]: for every $\Delta_0^0$ formula $\theta(y,z,t)$ and every $a \in \NN$, if $\forall y \exists z \forall t \theta(y,z,t)$ holds, then there exists some $\bbomega^x$-large${}^*(\theta)$ set $s$ with $\min s > a$ for some $x > \mathbb{I}$ .
\end{itemize}
    
    
\end{definition}

\Cref{fig:forcing-translations} summarizes the different intermediate theories we will consider between $\RCA_0 + \BSig_2$ and $\RT_2^2 + \WKL_0$, and the polynomial simulation between them. It is adapted from \cite[Figure 1]{kolodziejczyk2023ramsey}, with $\BSig_2$ added to every theory, and with a $\forall \Sigma_3^0$ polynomial simulation between 
$\RCA_0 + \BSig_2 + \mathbb{I}'$ and $\WKL_0 + \RT_2^2$ instead of a $\forall \Sigma_2^0$ polynomial simulations.

\begin{figure}[tph]

\centering
\begin{tikzpicture}[xscale=2,yscale=1.2]
\node(IS1)    at ( 0,0)   {$\mathsf{B}\Sigma_2$};
\node(RCA0)   at ( 0,1)   {$\RCA_0 + \BSig_2$};
\node[minimum width=8em](RCA0+)
              at (-1,2.5) {$\RCA_0+\ISig_2$};
\node[minimum width=8em](RCA0-)
              at ( 1,2.5) {$\RCA_0+\BSig_2 + \neg\ISig_2$};
\node(RCA0I1) at ( 0,3.5) {$\RCA_0+\BSig_2 + (\mathbb{I}1)$};
\node(RCA0I)  at ( 0,4.5) {$\RCA_0+\BSig_2 + \mathbb{I}'$};
\node(RT22)   at ( 0,5.5) {$\WKL_0+\RT^2_2$};
\begin{scope}[->,node font=\scriptsize]
\draw(IS1)   --node[right]{$\mathcal{L}_1$}                  (RCA0);
\draw(RCA0+) --node[above left]{$\forall\Sigma^0_3$} (RCA0I1);
\draw(RCA0-) --node[above right]{$\mathcal{L}_2$}            (RCA0I1);
\draw(RCA0I1)--node[right]{$\mathcal{L}_2$}                  (RCA0I);
\draw(RCA0I) --node[right]{$\forall\Sigma^0_3$}      (RT22);
\end{scope}
\draw(RCA0)--++(0,.5) coordinate(pivot)
     (pivot)--(RCA0+)
     (pivot)--(RCA0-)
     (pivot)+(45:.2) arc [radius=.2, start angle=45, end angle=135];
\end{tikzpicture}

\caption{Polynomial simulations between the various theories in \Cref{sec:translation}.}
\label[figure]{fig:forcing-translations} 
\end{figure}

All of these polynomial simulations except the one between $\RCA_0+\BSig_2 + \mathbb{I}'$ and $\WKL_0+\RT^2_2$ are obtained by a traditional interpretation, which can be seen as a degenerate forcing interpretation. In most cases, the interpretation will be a straightforward adaptation of the one in \cite{kolodziejczyk2023ramsey}.

The following result corresponds to \cite[Lemma 2.8]{kolodziejczyk2023ramsey}. Solovay's technique of shortening cuts used here to obtain a cut stable by addition and multiplication (see e.g. \cite[Theorem III.3.5]{hajek1998metamathematics}) cannot be used to construct cuts stable by exponentiation, which is the reason why the exponential bound of \cite{houerou2026pi} was not sufficient to obtain a polynomial simulation. 

\begin{lemma}\label[lemma]{lem:poly-sim-I-I1}
    $\RCA_0 + \BSig_2 + (\mathbb{I}1)$ polynomially simulates $\RCA_0 + \BSig_2 + \mathbb{I}'$ with respect to $\L_2$ sentences.
\end{lemma}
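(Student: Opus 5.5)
The plan is to follow \cite[Lemma 2.8]{kolodziejczyk2023ramsey} closely: we give a polynomial-time interpretation of $\RCA_0 + \BSig_2 + \mathbb{I}'$ in $\RCA_0 + \BSig_2 + (\mathbb{I}1)$ that leaves the second-order part and the arithmetic untouched and only redefines the cut. Since $\L_2$ sentences do not mention $\mathbb{I}$, they are translated to themselves. Hence any proof of an $\L_2$ sentence in the stronger theory becomes, after translation, a proof in the weaker theory. Its size is bounded by a fixed polynomial in the original size, plus a constant overhead for the proofs of the translated axioms.

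Concretely, inside $\RCA_0 + \BSig_2 + (\mathbb{I}1)$, let $\mathbb{I}$ be the given proper cut. We first pass to the subcut
$$\mathbb{J}_0 = \{ x : \forall \theta\ \forall a\ [\forall y\exists z\forall t\,\theta \to \exists s\,(\min s > a \wedge s \text{ is } \bbomega^{x}\text{-large}^*(\theta))]\} \cap \mathbb{I}.$$
This is not quite first-order expressible uniformly in $\theta$, since $\forall y\exists z\forall t\,\theta$ is $\Pi_3$ and quantifying over $\theta$ requires a truth definition. So, as in \cite{kolodziejczyk2023ramsey}, we instead use the partial satisfaction predicate for $\Delta_0$ formulas, which is available in $\ISig_1$ with polynomial-size proofs of its Tarski clauses.

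The key facts to verify are the following. First, $\mathbb{J}_0$ is downward closed. Second, it is closed under successor, and in fact under addition, by \Cref{prop:new-cut-is-additive}. The argument there, using \Cref{lem:infinite-pairwise-apart-large} and \Cref{prop:new-large-set-of-large-sets-is-large}, only needs $\BSig_2 + T$, and it produces large sets above any bound $a$ by applying it to $X = (a,\infty)$. Third, $\mathbb{J}_0$ is nonempty and proper.

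We then apply Solovay's shortening technique \cite[Theorem III.3.5]{hajek1998metamathematics}. Define
$$\mathbb{J}_1 = \{x \in \mathbb{J}_0 : \forall y \in \mathbb{J}_0\ (x+y \in \mathbb{J}_0)\},\qquad \mathbb{J} = \{x : x\cdot x \in \mathbb{J}_1\}.$$
Then $\mathbb{J}$ is a cut closed under addition and multiplication, and the standard proofs are of constant size. We interpret $\mathbb{I}$ as $\mathbb{J}$.

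Axiom $(\mathbb{I}1)$ holds because $\mathbb{J} \subseteq \mathbb{I}$ is proper and $0 \in \mathbb{J}$. Axiom $(\mathbb{I}'2)$ holds by construction.

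For $(\mathbb{I}'3)$, fix $\theta$ with $T$ true and some $a$. We need an $x > \mathbb{J}$ together with an $\bbomega^x$-large$^*(\theta)$ set above $a$. The hard case is when $\mathbb{J}_0$ is a proper subcut of $\mathbb{I}$ as defined via all $\theta$: the definition of $\mathbb{J}_0$ only guarantees large sets for $x \in \mathbb{J}_0$, not beyond it. The remedy, as in \cite{kolodziejczyk2023ramsey}, is to make the axiom schematic in $\theta$ rather than uniform.

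For each fixed $\theta$, let $\mathbb{J}^\theta$ be the cut of those $x \in \mathbb{J}$ for which $\bbomega^{x}$-large$^*(\theta)$ sets exist above every $a$. If this cut is all of $\mathbb{J}$, two cases arise. If some $x$ beyond $\mathbb{J}$ still has large sets above $a$, we are done. Otherwise the set of $x$ admitting such sets above $a$ is a definable cut equal to $\mathbb{J}$. In that case we argue by a case split on whether the set of good $x$ is bounded, using overspill only where $\ISig_1$ suffices.

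The main obstacle is precisely this $(\mathbb{I}'3)$ step. A plain instance-by-instance redefinition does not work, because $(\mathbb{I}'3)$ quantifies over all $\theta$. I would therefore first try the uniform definition of $\mathbb{J}_0$ via the $\Delta_0$-satisfaction predicate, taking $\mathbb{J}_0$ to be the set of $x$ such that for every code $\theta$ for which $T_\theta$ holds, large sets exist above every $a$. Properness of $\mathbb{J}_0$ then comes from $\mathbb{J}_0 \subseteq \mathbb{I}$. The required $x > \mathbb{J}$ comes from $\mathbb{J} \subsetneq \mathbb{J}_0$, which holds because $\mathbb{J}_0$ is additive while $\mathbb{J}$ is its multiplicative shortening. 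Concretely, take any $x \in \mathbb{J}_0 \setminus \mathbb{J}$; it exists unless $\mathbb{J}_0 = \mathbb{J}$, and that case can be circumvented by shortening once more.

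Finally, we check that all these definitions and verifications have proofs whose size is independent of the proof being translated. The translation of each line then only substitutes a fixed formula for $\mathbb{I}$, which gives a linear, hence polynomial, blow-up.
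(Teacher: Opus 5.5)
There is a genuine gap in your verification of $(\mathbb{I}'3)$. Your argument picks $x \in \mathbb{J}_0 \setminus \mathbb{J}$ and asserts $\mathbb{J} \subsetneq \mathbb{J}_0$ because $\mathbb{J}$ is a shortening of $\mathbb{J}_0$. Shortening need not shrink a cut. Whenever $\mathbb{J}_0$ is already closed under multiplication, every Solovay shortening returns $\mathbb{J}_0$ itself, and ``shortening once more'' changes nothing. This happens, e.g., in a nonstandard model where $\mathbb{I}$ is interpreted as $\omega$, which $(\mathbb{I}1)$ allows; there $\mathbb{J}_0=\omega$.

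The idea you need is $\Sigma^0_1$-overspill, and it is exactly the paper's argument. Fix $\theta$ with $\forall y\exists z\forall t\,\theta$, and fix $a$. Consider
$S_{a,\theta}=\{x : \exists s\,(\min s>a \wedge s \text{ is } \bbomega^x\text{-large}^*(\theta))\}$.
This set is $\Sigma^0_1$-definable. It contains $\mathbb{J}$, because $\mathbb{J}\subseteq \Jcut^*_\theta$ and you may apply the definition of $\Jcut^*_\theta$ to the infinite set $(a,\infty)$. If $S_{a,\theta}\subseteq\mathbb{J}$, it would coincide with the proper cut $\mathbb{J}$, which would then be $\Sigma^0_1$-definable, contradicting $\ISig_1$. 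Hence some $x\in S_{a,\theta}\setminus\mathbb{J}$ exists, and $x>\mathbb{J}$ since $\mathbb{J}$ is an initial segment. That is $(\mathbb{I}'3)$.

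This works for any proper subcut of $\bigcap_\theta \Jcut^*_\theta$, so it is insensitive to how much you shorten. It also only needs a large set above the given $a$, not above every $a$. Your case analysis actually reaches this point (``the set of $x$ admitting such sets above $a$ is a definable cut equal to $\mathbb{J}$''). But that case is immediately contradictory because the set is $\Sigma^0_1$, so no further case split is needed.

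Two smaller repairs are also needed.

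First, you define $\mathbb{J}_0$ via ``large sets exist above every $a$'', but invoke \Cref{prop:new-cut-is-additive} to get that it is a cut. That proof applies the hypothesis $n\in\Jcut^*_\theta$ to the infinite set $\{\max E_s : s\in\NN\}$, which is not an interval. So it does not transfer to your first-order version, and as written nothing shows your $\mathbb{J}_0$ is closed under successor. Define $\mathbb{J}_0$ from the cuts $\Jcut^*_\theta$ (intersected with $\mathbb{I}$) as the paper does. You only need it to be a cut, not additive, since Solovay's shortening supplies closure under addition.

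Second, $\{x : x\cdot x\in\mathbb{J}_1\}$ is closed under addition but not, in general, under multiplication. For $\mathbb{J}_1=\sup\{a\cdot k : k\in\omega\}$ with $a$ nonstandard, it contains $r=\lfloor\sqrt{a}\rfloor$ but not $r\cdot r$. Use $\{x\in\mathbb{J}_1 : \forall y\in\mathbb{J}_1\,(x\cdot y\in\mathbb{J}_1)\}$ instead.
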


\begin{proof}
We show that there exists an interpretation of $\RCA_0 + \BSig_2 + \mathbb{I}'$ in $\RCA_0 + \BSig_2 + (\mathbb{I}1)$ that is the identity interpretation with respect to all symbols of $\L_2$.

Let $\mathbb{J}$ be the intersection of all the $\Jcut^*_\theta$ for every $\Delta_0^0$ formula $\theta(y,z,t)$ such that $\forall y \exists z \forall t \theta(y,z,t)$ holds.

By \Cref{prop:new-cut-is-additive}, $\RCA_0 + \BSig_2$ proves that $\mathbb{J}$ forms a cut. Notice that for $\theta(y,z,t)$ a $\Delta_0^0$ formula and $a \in \NN$, the formula \qt{there exists an $\bbomega^x$-large${}^*(\theta)$ set $s$ with $\min s > a$} is $\Sigma_1^0$. Since $\ISig_1$ holds, the proper cut $\mathbb{I} \cap \mathbb{J}$ is not $\Sigma^0_1$-definable, hence, if $\forall y \exists z \forall t \theta(y,z,t)$ holds, then, for every $a \in \NN$ there must exists some $\bbomega^x$-large${}^*(\theta)$ set $s > a$ for some $x > \mathbb{I} \cap \mathbb{J}$. Thus, $\mathbb{I} \cap \mathbb{J}$ is, provably in $\RCA_0 + \BSig_2 + (\mathbb{I}1)$, a proper definable cut satisfying $(\mathbb{I}1)$ and $(\mathbb{I}'3)$. 

By applying Solovay's technique of shortening cuts, we consider the cuts $\mathbb{K} = \{a \in \mathbb{I} \cap \mathbb{J} : \forall x \in \mathbb{I} \cap \mathbb{J}\ (a + x \in \mathbb{I} \cap \mathbb{J})\}$ and $\mathbb{K}' = \{a \in \mathbb{K} : \forall x \in \mathbb{K}\ (a\cdot x \in \mathbb{K})\}$ that are respectively closed under addition and multiplication. Thus, $\RCA_0 + (\mathbb{I}1)$ proves the axioms of $\RCA_0 + \mathbb{I}'$ with $\mathbb{K}'$ substituted for $\mathbb{I}$.
\end{proof}

The following result corresponds to \cite[Lemma 2.9]{kolodziejczyk2023ramsey}.

\begin{lemma}\label[lemma]{lem:poly-sim-I1}
    $\RCA_0 + \BSig_2 + (\mathbb{I}1)$ is polynomially simulated by:
    \begin{enumerate}[(a)]
        \item $\RCA_0 + \ISig_2$ with respect to $\forall \Sigma_3^0$ sentences,
        \item $\RCA_0 + \BSig_2 + \neg \ISig_2$ with respect to $\mathcal{L}_2$ sentences.
    \end{enumerate}
\end{lemma}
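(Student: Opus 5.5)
We follow the proof of \cite[Lemma 2.9]{kolodziejczyk2023ramsey}, adding $\BSig_2$ to each theory and raising the sentence class in part (a) from $\forall\Sigma^0_2$ to $\forall\Sigma^0_3$. In both parts we build an interpretation (possibly a degenerate forcing interpretation) of $\RCA_0 + \BSig_2 + (\mathbb{I}1)$ that is the identity on $\L_2$. Such an interpretation has polynomial size and can be produced uniformly, so any proof in the stronger theory becomes a proof in the weaker one at polynomial cost.

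For (b), we work in $\RCA_0 + \BSig_2 + \neg\ISig_2$. Failure of $\ISig_2$ gives a set parameter $A$ and a $\Sigma^0_2(A)$ formula $\varphi(x,A)$ such that $\varphi(0,A)$ holds, $\varphi$ is closed under successor, and $\varphi(b,A)$ fails for some $b$. Since $A$ cannot be removed uniformly, we use the usual trick: interpret $\mathbb{I}$ as the cut $\{x : \forall y \leq x\ \varphi(y,A)\}$ for a witnessing pair $(A,b)$ supplied by the hypothesis $\neg\ISig_2$. The axioms hold: this is a nonempty initial segment closed under successor and bounded by $b$, so it is a proper cut, and $\BSig_2$ is untouched because the universe is unchanged. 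Because the hypothesis is a single axiom instance, the translation is obtained by an existential instantiation at the start of the proof, and translated proofs grow only linearly.

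For (a), we work in $\RCA_0 + \ISig_2$ and take an arbitrary $\forall\Sigma^0_3$ sentence $\forall X\,\exists u\,\forall v\,\exists w\,\delta(X,u,v,w)$ proved in $\RCA_0 + \BSig_2 + (\mathbb{I}1)$. We argue by contradiction. Assume the negation, $\exists X\,\forall u\,\exists v\,\forall w\,\neg\delta$, and fix such an $X$. In \cite{kolodziejczyk2023ramsey} the cut is the one from a counterexample to a $\Sigma^0_1$ statement. Here we instead define a cut from the $\Pi^0_2$ failure, using $\ISig_2$ (equivalently $\mathsf{B}\Pi^0_2$ plus $\Sigma^0_2$-least number principle) to keep the construction sound. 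Concretely, the plan is as follows.

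\begin{enumerate}[(1)]
\item Use $\ISig_2$ to show that the $\Sigma^0_2$ part of the counterexample can be bounded uniformly on initial segments.
\item Shorten cuts so that the counterexample persists below a $\Sigma^0_2$-definable proper cut $I$.
\item Check that $(\mathbb{I}1)$ holds for $I$ and that $\BSig_2$ holds outright, since it follows from $\ISig_2$.
\end{enumerate}

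The translated proof then derives the $\forall\Sigma^0_3$ sentence, which contradicts the counterexample inside the interpretation. This requires the interpretation to be the identity on first-order and set variables, so that the $\forall\Sigma^0_3$ formula means the same thing in both theories.

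The main obstacle is (a). In the $\forall\Sigma^0_2$ case a $\Sigma^0_1$ failure trivially yields a definable proper cut. For a $\Pi^0_3$ counterexample this is no longer immediate, and we must ensure that the interpreted cut is proper without making the translation depend on the particular sentence in a non-polynomial way. The first thing we would try is a cleaner route: show that $\RCA_0 + \ISig_2$ proves the existence of a proper definable cut outright. For example, take $\{x : x < 2_x(\text{some fixed }c)\}$ or a logarithmic cut of a nonstandard-looking element, so that $\mathbb{I}$ is defined independently of the sentence. Then the interpretation is the identity on $\L_2$ and (a) even holds for all $\L_2$ sentences conservative over the base. If no canonical proper cut is available (in the standard model every definable cut is all of $\NN$), we fall back on the contradiction argument above. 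In that case the unavoidable use of the sentence is confined to a fixed-size case split at the top of the proof, which keeps the simulation polynomial.
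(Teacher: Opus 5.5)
\textbf{Part (b)} is correct and matches the paper, which reuses \cite[Lemma 2.9(b)]{kolodziejczyk2023ramsey}. Since $\mathbb{I}$ does not enter the induction or comprehension schemes, interpreting it as the cut $\{x : \forall y \le x\,\varphi(y,A)\}$ coming from a failure of $\ISig_2$ gives an interpretation that is the identity on $\L_2$. Here $\varphi$ should be a fixed universal $\Sigma^0_2$ formula, so the instantiated witnesses are $A$, a number parameter, and $b$.

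\textbf{Part (a)} has a genuine gap. Your step (2) asks for a $\Sigma^0_2$-definable proper cut in $\RCA_0+\ISig_2$. But $\ISig_2$ says exactly that every $\Sigma^0_2$-definable class containing $0$ and closed under successor is all of $\NN$, so this step is refutable outright.

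Moving to more complex definitions does not rescue the plan. Steps (1)--(3) use only $\ISig_2$ and the counterexample to $\psi$, never the given proof. If they worked, they would work for every $\forall\Sigma^0_3$ sentence, including false ones such as $\forall X\,\exists u\,\forall v\,\exists w\,(0=1)$. They would then produce a definable proper cut in the standard model, which does not exist, as you note yourself. A true $\Pi^0_3$ fact carries no nonstandardness. So your fallback inherits the defect of your ``cleaner route'', and a case split at the top of the proof does not repair it. In fact no interpretation that is the identity on $\L_2$ can work in (a). This is why (a) is restricted to $\forall\Sigma^0_3$ sentences while (b) holds for all $\L_2$ sentences.

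The missing ingredient is a reflection principle. Following \cite[Lemma 2.9(a)]{kolodziejczyk2023ramsey}, the paper invokes Beklemishev's theorem \cite[Theorem 5.1]{beklemishev1998proof} that $\ISig_2$ proves the uniform $\Pi_4$-reflection principle for $\BSig_2$. Reflection lets $\RCA_0+\ISig_2$ pass from a formalized provability statement to the sentence itself. The given proof certifies that provability statement at polynomial cost, via the argument of \cite[Lemma 2.9(a)]{kolodziejczyk2023ramsey}, and each reflection instance has size polynomial in the sentence. The proper cut therefore never has to be defined in the ground model. The $\Pi_4$ level of the reflection is what matches the class $\forall\Sigma^0_3$, just as $\Pi_3$-reflection for $\ISig_1$ matches $\forall\Sigma^0_2$ in the original argument.
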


\begin{proof}
The proof of \cite[Lemma 2.9.(a)]{kolodziejczyk2023ramsey} already gives the stronger result with $\RCA_0 + \BSig_2 + (\mathbb{I}1)$ instead of $\RCA_0 + (\mathbb{I}1)$. Indeed, by \cite[Theorem 5.1]{beklemishev1998proof}, $\mathsf{I}\Sigma_2$ proves the uniform $\Pi_4$ reflection principle for $\mathsf{B}\Sigma_2$ and not just $\mathsf{I}\Sigma_1$.

The proof of $(b)$ is the same as that of \cite[Lemma 2.9.(b)]{kolodziejczyk2023ramsey}.
\end{proof}

We now turn to the main polynomial simulation, that is, the proof that $\RCA_0+\BSig_2+\mathbb{I}'$ polynomially simulates $\WKL_0 + \BSig_2$. As explained, this involves the definition of a forcing interpretation, which formalizes the construction of $(I, \Cod(M/I)$ for a generic proper cut~$I$. We start with the definition of the forcing interpretation, which consists of defining a set of conditions $\mathsf{Cond}$, partially ordered by a relation $\trianglelefteqslant$, together with two predicates, namely~$s \Vdash v \downarrow$ for $s \in \mathsf{Cond}$ and $v \in \NN$, and $s \Vdash \alpha(v_0, \dots, v_{\ell-1})$ for every $s \in \mathsf{Cond}$ and every simple atomic formula $\alpha(v_0, \dots, v_{\ell-1})$. 

As our notion of largeness depends on a parameter $\theta$, so will the forcing conditions. A condition will therefore be a pair $(s, \theta)$, such that if $(s, \theta') \trianglelefteqslant (t, \theta)$, then $\theta = \theta'$. It follows that in any filter~$\F$, the second component of each condition shares the same parameter~$\theta$. Thus, the resulting forcing notion will satisfy the same properties as the one defined in \cite[Definition 2.11]{kolodziejczyk2023ramsey}.

\begin{definition}\label[definition]{def:forcing-interpretation-def}\ 
\begin{itemize}
    \item[(i)] A \emph{forcing condition} is a couple $(s,\theta)$ such that $\theta$ is (the code of) a $\Delta_0^0$ formula and $s$ is a finite $\bbomega^x$-large${}^*(\theta)$ set for some $x > \mathbb{I}$. We let $\mathsf{Cond}$ be the set of all forcing conditions.
    \item[(ii)] Given two forcing conditions $(s_0, \theta_0)$ and $(s_1, \theta_1)$, let $(s_0, \theta_0) \trianglelefteqslant (s_1, \theta_1)$ if $\theta_0 = \theta_1$ and $s_0 \subseteq s_1$.
    \item[(iii)] Let $(s,\theta) \Vdash v \downarrow$ if $(s \cap [1,v], \theta)$ is not a condition. And $(s,\theta) \Vdash V \downarrow$ always holds. Given a simple atomic formula $\phi(\bar{v})$, let $(s,\theta) \Vdash \phi$ if $(s \Vdash \bar{v} \downarrow) \wedge \phi(\bar{v})$, where $\bar v$ contains all the parameters of~$\phi$.
\end{itemize}
\end{definition}

The relations $\mathsf{Cond}, \trianglelefteqslant$, $\Vdash$ shares the same properties as their equivalent from \cite[Definition 2.11]{kolodziejczyk2023ramsey} and direct counterparts of \cite[Lemma 2.12, 2.13 and 2.14]{kolodziejczyk2023ramsey} can be proven in the same manner.
The following counterpart of \cite[Lemma 2.15]{kolodziejczyk2023ramsey} exploits in an essential way the polynomial bound obtained in \Cref{cor:rt22-largeness-star-theta-bound-no-sparsity}.

\begin{lemma}\label[lemma]{lem:polynomial-forcing-interpretation-wkl-rt22}
    The relations $\mathsf{Cond}, \trianglelefteqslant$, $\Vdash$ of \Cref{def:forcing-interpretation-def} determine a polynomial forcing interpretation of $\WKL_0 + \RT_2^2$ in $\RCA_0 + \BSig_2 + \mathbb{I}'$.
\end{lemma}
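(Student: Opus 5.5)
We follow the structure of the proof of \cite[Lemma 2.15]{kolodziejczyk2023ramsey}. We have to check, inside $\RCA_0 + \BSig_2 + \mathbb{I}'$, that every axiom of $\WKL_0 + \RT^2_2$ is forced by every condition, with proofs of polynomial size in the axiom. The generic model is $(I, \Cod(M/I))$, where $I$ is the cut determined by a generic filter of conditions $(s,\theta)$.

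The preliminary steps carry over almost verbatim. These are the counterparts of \cite[Lemmas 2.12--2.14]{kolodziejczyk2023ramsey}:
\begin{itemize}
\item the basic axioms of $\PAm$;
\item $\Delta^0_0$-induction and comprehension for the coded sets;
\item $\WKL$.
\end{itemize}
For $\WKL$ and for deciding the forced cut, we need to refine a condition $(s,\theta)$. Here $s$ is $\bbomega^x$-large${}^*(\theta)$ with $x > \mathbb{I}$, and we want a subcondition avoiding a given finite set or bounded below some element. We obtain it using \Cref{prop:split-new-largeness} with $n = x-1-m$ and $m$ chosen just above $\mathbb{I}$, together with the finite pigeonhole principle. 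This mirrors conditions (3)--(4) in the proof of \Cref{prop:semiregular-wf-add}. Since $\mathbb{I}$ is closed under addition, $x - m - 1$ is still above $\mathbb{I}$, so the resulting pieces are again conditions.

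We then handle $\BSig_2$ and $T$-type statements. $\BSig_2$ in the generic model follows as in \cite{kolodziejczyk2023ramsey}, from the semi-regularity argument. The key new point is that the $\theta$-apartness built into conditions forces the $\Pi_3$ statement $\forall y\exists z\forall t\,\theta$ in the generic cut. This corresponds to condition (5) in \Cref{prop:semiregular-wf-add}: any $\bbomega^{x}$-large${}^*(\theta)$ set splits into two $\theta$-apart $\bbomega^{x-1}$-large${}^*(\theta)$ pieces. Axiom $(\mathbb{I}'3)$ guarantees that conditions with a given $\theta$ exist above every $a$, so the filter is nonempty whenever $T$ holds. This is what lets $\forall\Sigma^0_3$ sentences transfer.

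The main step is $\RT^2_2$. Let $(s,\theta)$ be a condition, $x > \mathbb{I}$ with $s$ $\bbomega^x$-large${}^*(\theta)$, and $f$ a coloring in the generic model, given by some finite $f : [s]^2 \to 2$. We want $(s',\theta) \trianglelefteqslant (s,\theta)$ with $s'$ $f$-homogeneous. Let $P$ be the polynomial of \Cref{cor:rt22-largeness-star-theta-bound-no-sparsity}, so $P(n) = 12n^2 + O(1)$. We need $n > \mathbb{I}$ with $P(n) \le x$. Since $\mathbb{I}$ is closed under multiplication by $(\mathbb{I}'2)$, the set $\{n : P(n) \in \mathbb{I}\}$ equals $\mathbb{I}$. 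Hence we can pick $n$ maximal with $P(n)\le x$, and then $n > \mathbb{I}$. Applying the corollary gives an $f$-homogeneous $\bbomega^n$-large${}^*(\theta)$ set $s' \subseteq s$, which is a condition.

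We expect the main obstacle to be this cut-arithmetic step, including verifying that the formalized statement of \Cref{cor:rt22-largeness-star-theta-bound-no-sparsity} is provable in $\ISig_1$ uniformly in $\theta$. That uniformity is needed so that the proof of the forcing translation has size polynomial in the input, with a fixed overhead per use of the $\RT^2_2$ axiom. It is also exactly where a merely exponential bound would fail, since $\mathbb{I}$ cannot be closed under exponentiation. Once this is in place, the counterpart of \cite[Lemma 2.15]{kolodziejczyk2023ramsey} yields a polynomial forcing interpretation.
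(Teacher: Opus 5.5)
This is essentially the paper's proof: the axioms other than $\RT^2_2$ are handled as in \cite[Lemma 2.15]{kolodziejczyk2023ramsey}, and $\RT^2_2$ is forced by applying \Cref{cor:rt22-largeness-star-theta-bound-no-sparsity} with some $n > \mathbb{I}$ satisfying $P(n) \le x$. The existence of such an $n$ follows from closure of $\mathbb{I}$ under addition and multiplication exactly as you argue; the paper leaves this cut arithmetic implicit. The remaining points are peripheral: forcing $\forall y \exists z \forall t\,\theta$ belongs to the separate reflection lemma (\Cref{lem:polynomially-reflecting}), not to this one, and for the other axioms the paper only needs that an $\bbomega^n \cdot k$-large${}^*(\theta)$ set contains $k$ many $\bbomega^n$-large${}^*(\theta)$ subsets $X_0 < \dots < X_{k-1}$, so your detour through \Cref{prop:split-new-largeness} is unnecessary (and, as phrased, additivity of $\mathbb{I}$ gives $x - m - 1 > \mathbb{I}$ for $m \in \mathbb{I}$, not for an arbitrary $m$ \qt{just above} $\mathbb{I}$).
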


\begin{proof}
Since by the counterpart of \cite[Lemma 2.12]{kolodziejczyk2023ramsey}, $\mathsf{Cond}$, $\trianglelefteqslant$, $\Vdash$ determine a forcing interpretation of pure logic from $\mathcal{L}_2$ in $\RCA_0 + \BSig_2 + \mathbb{I}'$, it remains to show that any condition $(s,\theta)$ forces each of the axioms of $\WKL_0 + \RT_2^2$. The proof is similar to that of \cite[Lemma 2.15]{kolodziejczyk2023ramsey}.

For $\RT_2^2$, we use \Cref{cor:rt22-largeness-star-theta-bound-no-sparsity} instead of \cite[Theorem 2.6]{kolodziejczyk2023ramsey}.

For the other axioms, the only combinatorial property of largeness the proof exploits is \cite[Proposition 2.4(ii)]{kolodziejczyk2023ramsey} in the case of $\omega^n \cdot k$-largeness, which also holds for $\omega^n \cdot k$-largeness${}^*(\theta)$: if $X$ is $\omega^n \cdot k$-large${}^*(\theta)$, there are $k$ many $\omega^n$-large${}^*(\theta)$ subsets $X_0 < \dots < X_{k-1} \subseteq X \setminus \{\min X\}$.
\end{proof}


The main difference between our proof and that of \cite{kolodziejczyk2023ramsey} lies in the following lemma. It corresponds to \cite[Lemma 2.16]{kolodziejczyk2023ramsey} but is formulated for $\forall \Sigma_3^0$-reflection rather than $\forall \Sigma_2^0$-reflection. In their argument, in order to obtain a condition $s$ forcing a given $\Pi_2^0$ formula $\forall y \exists z \theta(y,z)$ holding in some model of $\RCA_0 + \mathbb{I}$, Ko{\l}odziejczyk et al.~ consider an infinite set $\{k_0, k_1, \dots \}$ such that $\forall y < k_i \exists z < k_{i+1} \theta(y,z)$ for every $i$ and then take $s$ to be an $\omega^x$-large subset of that set for some $x > \mathbb{I}$. However, for a $\Pi_3^0$ formula $\forall y \exists z \forall t \theta(y,z,t)$ holding in a model of $\RCA_0 + \BSig_2$, one cannot prove the existence of an infinite set $\{k_0, k_1, \dots \}$ satisfying $\forall y < k_i \exists z < k_{i+1} \forall t \theta(y,z,t)$. Instead, we will choose some $\omega^x$-large${}^*(\theta)$ set $s$ for some $x > \mathbb{I}$, and prove that $(s,\theta)$ forces $\forall y \exists z \forall t \theta(y,z,t)$.

\begin{lemma}\label[lemma]{lem:polynomially-reflecting}
    The forcing interpretation of $\WKL_0 + \RT_2^2$ in $\RCA_0 + \BSig_2 + \mathbb{I}'$ given by $\mathsf{Cond}, \trianglelefteqslant$, $\Vdash$ of \Cref{def:forcing-interpretation-def} is polynomially $\forall \Sigma_3^0$-reflecting.
\end{lemma}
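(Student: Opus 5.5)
The plan is to follow the structure of \cite[Lemma 2.16]{kolodziejczyk2023ramsey}. By definition, polynomially $\forall \Sigma^0_3$-reflecting means the following. For every $\forall\Sigma^0_3$ sentence $\psi \equiv \forall X\, \exists y\, \forall z\, \exists t\, \eta(X,y,z,t)$, there should be a proof of polynomial size in $|\psi|$ in $\RCA_0+\BSig_2+\mathbb{I}'$ of the implication \qt{if every condition forces $\psi$, then $\psi$ holds}. Equivalently, taking the contrapositive, if $\neg\psi$ holds we must exhibit a condition forcing $\neg\psi$. Fix a set $A$ witnessing $\neg \psi$, so that $\forall y\, \exists z\, \forall t\, \neg\eta(A,y,z,t)$ holds. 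Since $A$ is a set, we pass to the $\Delta^0_0$-formula $\theta(y,z,t)$ obtained by treating $A$ as a parameter, coding $A\upharpoonright t$ into the bounded quantifiers. This step is purely syntactic, and its size is linear in $|\psi|$.

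By axiom $(\mathbb{I}'3)$, applied with this $\theta$ and $a$ above some bound, there is a finite set $s$ that is $\bbomega^x$-large${}^*(\theta)$ for some $x>\mathbb{I}$. Thus $(s,\theta)\in\mathsf{Cond}$. The set $A$ is interpreted in the forcing as $A\cap[0,\max s]$ restricted to the generic cut. It is coded by a finite set, so it is available in every extension, and we can check that $(s,\theta)$ forces $\theta$ to have its intended meaning on the ground-model set $A$.

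The central step is to show $(s,\theta)\Vdash \forall y\,\exists z\,\forall t\,\theta(y,z,t)$. Unfolding the forcing clauses of \cite{kolodziejczyk2023ramsey}, this amounts to the following. For every extension $(s',\theta)\trianglelefteqslant(s,\theta)$ and every $y$ forced to be defined, meaning that $(s'\cap[1,y],\theta)$ is not a condition, there is a further extension $(s'',\theta)$ and a $z$ forced to be defined such that $\theta(y,z,t)$ holds for all $t$ forced to be defined by extensions of $(s'',\theta)$. The largeness${}^*(\theta)$ structure does the work here. Since $s'$ is $\bbomega^{x'}$-large${}^*(\theta)$ with $x'>\mathbb{I}$, and $y$ lies below a non-large initial segment, we split $s'$. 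Unfolding the definition, or using \Cref{prop:split-new-largeness} with $n=x'-1$, gives pairwise $\theta$-apart subsets $X_0<X_1$, both still $\bbomega^{x'-1}$-large${}^*(\theta)$ with $x'-1>\mathbb{I}$ because $\mathbb{I}$ is a cut. We may choose the split so that $y<\max X_0$. We set $s''=X_1$. Then $\theta$-apartness gives $z<\min X_1$ with $\theta(y,z,t)$ for all $t<\max X_1$. Any $t$ forced defined by an extension of $(X_1,\theta)$ satisfies $t\le\max X_1$, and $z$ is forced defined because $X_1\cap[1,z]=\emptyset$ is not a condition. The care needed here is the same as in the proof of (5) in \Cref{prop:semiregular-wf-add}.

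I expect the main obstacle to be the polynomial bookkeeping rather than the combinatorics. The formalized argument has to be one uniform proof schema whose instances grow only polynomially with $\psi$. This needs truth definitions for $\Delta^0_0$ formulas of bounded complexity and the forcing-relation lemmas (counterparts of \cite[Lemmas 2.12--2.14]{kolodziejczyk2023ramsey}) for formulas of the relevant depth. I will follow \cite[Lemma 2.16]{kolodziejczyk2023ramsey} verbatim for this part, replacing only the $\Pi^0_2$ argument with the $\theta$-apartness argument above, which is a fixed-size addition.
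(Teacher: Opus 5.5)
Your proposal is correct and is essentially the paper's argument: take $(s,\theta(A))$ from $(\mathbb{I}'3)$; then, given an extension $s'$ and a forced-defined $y$, split into two $\theta$-apart $\bbomega^{x'-1}$-large${}^*(\theta)$ pieces $X_0<X_1$ and use apartness to find $z<\min X_1$ with $\theta(y,z,t)$ for all $t<\max X_1$. The paper first passes to $s'\setminus[1,y]$, whereas you split $s'$ directly and use that $y<\max X_0$ is automatic since $s'\cap[1,y]$ is not a condition; you also make explicit the interpretation of $A$ as the finite set $A\cap[0,\max s]$, which the paper leaves implicit. One small slip: a $t$ forced defined by $(s''',\theta)$ with $s'''\subseteq X_1$ satisfies the strict bound $t<\max s'''\le\max X_1$ (otherwise $s'''\cap[1,t]=s'''$ would be a condition), and it is this strict bound, not $t\le\max X_1$, that $\theta$-apartness requires.
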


\begin{proof}
We proceed by contrapositive. Let $\phi := \exists X \forall y \exists z \forall t \theta(X,y,z,t)$ be a $\exists \Pi_3^0$ sentence and assume that $\RCA_0 + \BSig_2 + \mathbb{I}' \vdash \phi$. Let $b \in \NN$ be bigger than all the first-order constants appearing in $\phi$. By $(\mathbb{I}'3)$, there exists a set $A$ and some $\bbomega^x$-large${}^*(\theta(A))$ set $s$ with $s > b$ for some $x > \mathbb{I}$. We claim that $(s,\theta(A)) \Vdash \forall y \exists z \forall t \theta(A,y,z,t)$, which would imply that $(s,\theta(A)) \not \Vdash \forall X \exists y \forall z \exists t \neg \theta(X,y,z,t)$ yielding the $\forall \Sigma_3^0$-reflection.


Take $(s',\theta(A)) \trianglelefteqslant (s,\theta(A))$ and $\ell$ such that $(s',\theta(A)) \Vdash \ell \downarrow$. Since $(s'\cap [1,\ell],\theta(A))$ is not a condition, $s' \setminus [1, \ell]$ must be $\omega^{x'}$-large${}^*(\theta)$ for some $x' > \mathbb{I}$. Let $s_0 < s_1$ be two $\bbomega^{x' - 1}$-large${}^*(\theta(A))$ and $\theta(A)$-apart subsets of $s' \setminus [1, \ell]$. We claim that $(s_1, \theta(A)) \Vdash \exists z \forall t \theta(A,\ell,z,t)$.

As $s_0$ and $s_1$ are $\theta(A)$-apart, there exists some $\ell' < \min s_1$ such that $\forall t < \max s_1, \theta(A,\ell,\ell',t)$ holds. As $\ell' < \min s_1$ we have that $s_1 \Vdash \ell' \downarrow$ since $(s_1 \cap [1, \ell'],\theta(A)) = (\emptyset,\theta(A))$ is not a condition. Hence it is sufficient to show that $(s_1, \theta(A)) \Vdash \forall t \theta(A,\ell, \ell',t)$.

Let $\ell''$ be such that $(s_1, \theta) \Vdash \ell'' \downarrow$. Necessarily we have $\ell'' < \max s_1$ otherwise $s_1 \cap [1,\ell''] = \emptyset$ and cannot be $\bbomega^{x''}$-large${}^*(\theta)$ for some $x'' > \mathbb{I}$. Thus, as $s_0$ and $s_1$ are $\theta(A)$-apart, $\theta(A,\ell, \ell', \ell'')$ holds. From the assumption that $s > b$, we have that $(s_1', \theta(A)) \Vdash v \downarrow$ for every first-order parameter $v$ in the formula $\theta(A, \ell, \ell', \ell'')$, hence $(s_1',\theta(A)) \Vdash \theta(A,\ell,\ell',\ell'')$ and $(s_1, \theta(A)) \Vdash \forall t \theta(A,\ell, \ell',t)$. Therefore, $(s_1, \theta(A)) \Vdash \exists z \forall t \theta(A,\ell,z,t)$ and $(s,\theta(A)) \Vdash \forall y \exists z \forall t \theta(A,y,z,t)$.
\end{proof}

We can now combine the previous lemmas and obtain the desired polynomial simulation.

\begin{lemma}\label[lemma]{lem:polynomial-simulation}
    $\RCA_0 + \mathbb{I}'$ polynomially simulates $\WKL_0 + \RT_2^2$ with respects to $\forall \Sigma_3^0$-sentences.
\end{lemma}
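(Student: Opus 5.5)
This lemma follows from the general machinery of forcing interpretations in \cite[Section 1]{kolodziejczyk2023ramsey}, fed with the two lemmas just established. The key abstract fact is the following. If $\mathsf{Cond}, \trianglelefteqslant, \Vdash$ form a polynomial forcing interpretation of a theory $T'$ in a theory $T$, and this interpretation is polynomially $\Gamma$-reflecting, then $T$ polynomially simulates $T'$ with respect to $\Gamma$-sentences. This is the content of \cite[Theorem 1.8]{kolodziejczyk2023ramsey} (or the corresponding general theorem there). So the plan is to verify both hypotheses for $T = \RCA_0 + \BSig_2 + \mathbb{I}'$ and $T' = \WKL_0 + \RT^2_2$, with $\Gamma = \forall\Sigma^0_3$.

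The two hypotheses are supplied directly by earlier results:
\begin{enumerate}[(i)]
\item By \Cref{lem:polynomial-forcing-interpretation-wkl-rt22}, the relations of \Cref{def:forcing-interpretation-def} form a polynomial forcing interpretation of $\WKL_0 + \RT^2_2$. In particular, the translation of any $T'$-proof $p$ into a $T$-proof that $\mathsf{Cond}$ forces the conclusion has size polynomial in $|p|$.
\item By \Cref{lem:polynomially-reflecting}, this interpretation is polynomially $\forall\Sigma^0_3$-reflecting. Concretely, there is a polynomial-size $T$-proof that if some condition forces the negation of a given $\forall\Sigma^0_3$ sentence $\psi$, then $\neg\psi$ holds.
\end{enumerate}

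The argument then runs as follows. Suppose $p$ is a $\WKL_0 + \RT^2_2$-proof of a $\forall\Sigma^0_3$ sentence $\psi$. By (i), there is a polynomial-size $T$-proof that every condition forces $\psi$. Next, argue in $T$ by contradiction: assume $\neg\psi$, which is an $\exists\Pi^0_3$ sentence. The proof of \Cref{lem:polynomially-reflecting} produces, via $(\mathbb{I}'3)$, a condition $(s,\theta(A))$ forcing $\neg\psi$. It also gives a polynomial-size argument that this condition cannot force $\psi$ as well, since forcing is consistent on conditions. This contradiction yields $\psi$ in $T$. All pieces are polynomial in $|p|$, so $|\hat p| \le Q(|p|)$ for a fixed polynomial $Q$.

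The statement names $\RCA_0 + \mathbb{I}'$ rather than $\RCA_0+\BSig_2+\mathbb{I}'$. I read this as an abbreviation for the theory of \Cref{fig:forcing-translations}, since $\BSig_2$ is used in \Cref{lem:polynomially-reflecting}; I would state the lemma with $\BSig_2$ included.

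The main obstacle is bookkeeping rather than new combinatorics. One must check that the general theorem applies when conditions are pairs $(s,\theta)$ rather than bare sets. Because $\trianglelefteqslant$ fixes $\theta$, every filter lives in a single $\theta$-slice, so the standard properties (the counterparts of \cite[Lemmas 2.12–2.14]{kolodziejczyk2023ramsey}) carry over verbatim. One must also check that the reflection argument's size is uniform in $\psi$, that is, polynomial in $|\psi|$. This holds because the construction of $s_0, s_1$ and the forcing verification depend only schematically on $\theta$.

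Finally, \Cref{thm:rt22-polynomially-simulated} follows by composing this lemma with \Cref{lem:poly-sim-I-I1} and \Cref{lem:poly-sim-I1}. Cases (a) and (b) of \Cref{lem:poly-sim-I1} cover $\RCA_0 + \ISig_2$ and $\RCA_0 + \BSig_2 + \neg\ISig_2$ respectively, and these are then combined by case distinction on $\ISig_2$.
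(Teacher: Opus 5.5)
Your proposal is correct and follows the paper's proof: the lemma comes from feeding \Cref{lem:polynomial-forcing-interpretation-wkl-rt22} (polynomial forcing interpretation) and \Cref{lem:polynomially-reflecting} (polynomial $\forall\Sigma^0_3$-reflection) into the general transfer theorem of Ko{\l}odziejczyk, Wong and Yokoyama, which the paper cites as \cite[Theorem 1.17]{kolodziejczyk2023ramsey} rather than Theorem 1.8. Your reading of the base theory as $\RCA_0 + \BSig_2 + \mathbb{I}'$ is also right, since the two cited lemmas are established for that theory and it is the one used when composing with \Cref{lem:poly-sim-I-I1}.
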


\begin{proof}
    Follows directly from \Cref{lem:polynomial-forcing-interpretation-wkl-rt22}, \Cref{lem:polynomially-reflecting} and \cite[Theorem 1.17]{kolodziejczyk2023ramsey}.
\end{proof}

Putting altogether the successive polynomial simulations, we obtain our main theorem:

\begin{repmaintheorem}{thm:rt22-polynomially-simulated}
$\RCA_0 + \BSig_2$ polynomially simulates $\WKL_0 + \RT_2^2$ with respect to $\forall \Sigma_3^0$ sentences.
\end{repmaintheorem}

\begin{proof}
    By \Cref{lem:polynomial-simulation} and \Cref{lem:poly-sim-I-I1} there is a polynomial-time procedure which, given a proof $\pi$ of a $\forall \Sigma_3^0$ sentence $\psi$ in $\WKL_0 + \RT_2^2$, outputs a proof $\pi'$ of $\psi$ in $\RCA_0 + \BSig_2 + (\mathbb{I}1)$. By \Cref{lem:poly-sim-I1}, a further polynomial-time procedure outputs a proof $\pi''$ of $\psi$ in $\RCA_0 + \ISig_2$ and a proof $\pi'''$ of $\psi$ in $\RCA_0 + \BSig_2 + \neg \ISig_2$. Combine $\pi'', \pi'''$, and a case distinction to obtain a proof of $\psi$ in $\RCA_0 + \BSig_2$.
\end{proof}

\bigskip
\begin{center}
\textbf{Acknowledgements}
\end{center}
The authors are thankful to Leszek Ko{\l}odziejczyk and Keita Yokoyama for insightful comments and discussions.

\bibliographystyle{plain}
\bibliography{biblio}

\end{document}